 \newtheorem{thm}{Theorem}[section]
 \newtheorem{lem}[thm]{Lemma}
 \newtheorem{assumption}[thm]{Assumption}
 \newtheorem{defn}[thm]{Definition}
 \newtheorem{rem}[thm]{Remark}
 \numberwithin{equation}{section}
\begin{document}

%
%
%
%
%
%
%
%
%
\title[Navier-Stokes and Darcy-Forchheimer-Brinkman systems on manifolds]{Transmission problems for the Navier-Stokes and Darcy-Forchheimer-Brinkman systems in Lipschitz domains on compact Riemannian manifolds}
\author[M. Kohr]{Mirela Kohr}
\address{%
Faculty of Mathematics and Computer Science, Babe\c{s}-Bolyai University,\\
1 M. Kog\u alniceanu Str.,
400084 Cluj-Napoca, Romania}

\email{mkohr@math.ubbcluj.ro}

\thanks{M. Kohr acknowledges the support of the grant PN-II-ID-PCE-2011-3-0994 of the Romanian National Authority for Scientific Research, CNCS-UEFISCDI. The research has been also partially supported by the grant EP/M013545/1: "Mathematical Analysis of Boundary-Domain Integral Equations for Nonlinear PDEs" from the EPSRC, UK}

\author[S. E. Mikhailov]{Sergey E. Mikhailov}
\address{Department of Mathematics, Brunel University London,\\
             Uxbridge, UB8 3PH, United Kingdom}

\email{sergey.mikhailov@brunel.ac.uk}


\author[W. L. Wendland]{Wolfgang L. Wendland}
\address{Institut f\"ur Angewandte Analysis und Numerische Simulation, Universit\"at Stuttgart,\\
Pfaffenwaldring, 57,
70569 Stuttgart, Germany}

\email{wendland@mathematik.uni-stuttgart.de}

\thanks{}

\subjclass{Primary 35J25, 42B20, 46E35; Secondary 76D, 76M}

\keywords{{Navier}-Stokes system, {Darcy}-Forchheimer-Brinkman system,
transmission problem, Lipschitz domain, compact Riemannian manifold, layer potential operators, Sobolev spaces, fixed point theorem.}

\dedicatory{{\em Published in:} J. Math. Fluid Mech., {\em 2016, DOI 10.1007/s00021-016-0273-6}}

\begin{abstract}
The purpose of this paper is to study boundary value problems of transmission type for the Navier-Stokes and Darcy-Forchheimer-Brinkman systems in two complementary Lipschitz domains on a compact Riemannian manifold of dimension $m\in \{2,3\}$. We exploit a layer potential method combined with a fixed point theorem in order to show existence and uniqueness results when the given data are suitably small in $L^2$-based Sobolev spaces.
\end{abstract}

\maketitle

\section{Introduction}
\label{sec:1}

Let $M$ be {an infinitely} smooth, compact, boundaryless Riemannian manifold of dimension $m\geq 2$. Let $\alpha >0$ be a given constant. Then the following linear equations
\begin{equation}
\label{matrix-pseudo-diff1}
{\mathcal L}_{\alpha }({\bf u},\pi ):=({\bf L}+\alpha {\mathbb I}){\bf u}+d\pi ={\bf f}, \ \delta {\bf u}=0
\end{equation}
determine the {\it Brinkman system}. Note that $${\bf L}:=2{\rm{Def}}^{*}{\rm{Def}}= -\triangle +d\delta -2\rm{Ric}$$ is the natural operator that appears in the structure of the Stokes system on $M$ (cf. \cite[pp. 161, 162]{Ebin-Marsd}; see also \cite{D-M-M,M-T}), ${\rm{Def }}$ is the deformation operator, $\triangle :=-(d\delta +\delta d)$ is the Hodge Laplacian, $d$ is the exterior derivative operator, $\delta $ is the exterior co-derivative operator, and ${\rm{Ric}}$ is the Ricci tensor of $M$ (see, e.g., \cite[Section 1]{D-M-M}, \cite[Chapters I-V]{Lang}, \cite{M-T}, \cite[Appendix B]{Taylor}). In the next section we give more details on these operators. For $\alpha =0$, {system} {\eqref{matrix-pseudo-diff1} reduces to the {incompressible} {\it Stokes system},
\begin{equation}
\label{Stokes}
\begin{array}{lll}
{\mathcal L}_{0}({\bf u},\pi ):={\bf L}{\bf u}+d\pi ={\bf f}, \ \delta {\bf u}=0.
\end{array}
\end{equation}
The nonlinear system
\begin{equation}
\label{D-B-F-1}
{\bf L}{\bf v}+\alpha {\bf v}+k|{\bf v}|{\bf v}+\beta \nabla _{\bf v}{\bf v}+dp={\bf f},\
\delta {\bf u}=0
\end{equation}
is called the {incompressible} {\it Darcy-Forchheimer-Brinkman system}. Note that $\nabla $ is the Levi-Civita connection, and $\nabla _{\bf u}{\bf u}$ is the covariant derivative of ${\bf u}$ with respect to ${\bf u}$. In Euclidean setting such a system describes flows in porous media saturated with viscous incompressible fluids when the inertia of such a fluid can not be neglected. The constants $\alpha ,k,\beta >0$ are determined by the physical properties of such a porous medium (for further details we refer the reader to the book by Nield and Bejan \cite[p. 17]{Ni-Be} and the references therein).
When {$\alpha=k=0$}, \eqref{D-B-F-1} reduces to the incompressible {\it Navier-Stokes system}
\begin{equation}
\label{N-S}
\begin{array}{lll}
{\bf L}{\bf v}+\beta \nabla _{\bf v}{\bf v}+dp={\bf f},\
\delta {\bf u}=0,
\end{array}
\end{equation}
that plays a main role in fluid mechanics.

{Note that the operators involved in the PDE systems \eqref{matrix-pseudo-diff1}-\eqref{N-S} are variable-coefficient operators due to their dependence on the differential and metric structures of the manifold.}

{{The considered} partial differential equations and related boundary value problems on {manifolds} have various practical applications. A suggestive example is given by the Navier-Stokes equations which reduce to the equations of thin layers when the thickness of such a {layer} tends to zero (cf., e.g., \cite{D-M-M}). For further relevant applications of the Navier-Stokes equations on manifolds we refer the reader to \cite{D-M-M, Ebin-Marsd, T-Z, T-W} and the references therein (see also \cite{Ci}). On the other hand, the analysis of transmission problems for the Navier-Stokes and Darcy-Forchheimer-Brinkman systems on compact surfaces (e.g., on the sphere $S^2$) is well motivated by the geophysical model of flow of water or other viscous fluids, which pass through porous rocks or porous soil (see also \cite{Ivancevic}).}

Layer potential methods have been often used in the mathematical analysis of elliptic boundary value problems on Lipschitz domains in ${\mathbb R}^n$, $n\geq 2$. Fabes, Kenig and Verchota in \cite{Fa-Ke-Ve} have studied the $L^2$-Dirichlet problem for the Stokes system in Lipschitz domains in Euclidean setting, by reducing such a problem to the analysis of related boundary integral operators. By using the technique of boundary integral equations, Mitrea and Wright in \cite{M-W} have obtained well-posedness results for the main boundary value problems of Dirichlet, Neumann, and transmission type for the Stokes system in Lipschitz domains in ${\mathbb R}^n$, $n\geq 2$, with data in $L^p$, Sobolev and Besov spaces. Medkov\'{a} in \cite{Med-CVEE} obtained well-posedness results for $L^2$-solutions of boundary problems of transmission type for the Brinkman system in Lipschitz domains in ${\mathbb R}^n$, $n\geq 3$. Other applications of layer potential theory for elliptic boundary value problems can be found in \cite{Fa-Me-Mi}, \cite{Gi-Tr} and \cite{12}.

{Another branch of integral methods for PDEs is related with parametrix (Levi function). Parametrix-based direct segregated systems of boundary-domain integral equations (BDIEs) for mixed boundary value problems of Dirichlet-Neumann type corresponding to a scalar second-order divergent elliptic partial differential equation with a variable coefficient in interior and exterior domains in ${\mathbb R}^3$ were analysed in \cite{CMN-1} and \cite{Ch-Mi-Na-3}, respectively. In domains with interior cuts (cracks) such systems have been studied in \cite{Ch-Mi-Na-2}. In \cite{Ch-Mi-Na-1}, the variable-coefficient transmission problems with interface crack for second order elliptic partial differential equations in a bounded composite domain consisting of adjacent anisotropic subdomains separated by an interface, were reduced to the localized direct segregated boundary-domain integral equations.
In \cite{Ch-Mi-Na} and \cite{CMN-ArXiv2015}, the so-called two-operator technique was used for reduction of more general scalar equations and systems of PDEs to localized boundary-domain integral equations.
Equivalence of the BDIEs to corresponding boundary value problems and the invertibility of the BDIE operators in the $L^2$-based Sobolev spaces have been  analysed in all these papers.
In \cite{Mikh-1,Mikh-2} some nonlinear boundary value problems  were reduced to direct localized boundary-domain integro-differential formulations.

Mitrea, Mitrea and Shi in \cite{M-M-Q} used a boundary integral method to study variable coefficient transmission problems on non-smooth compact manifolds.
Mitrea and Taylor in \cite{M-T} have used a boundary integral method in the analysis of the $L^2$-Dirichlet problem for the Stokes system on arbitrary Lipschitz domains on a compact Riemannian manifold, extending the results {of} \cite{Fa-Ke-Ve} from the Euclidean setting to compact Riemannian manifolds (see also \cite{H-M-T, M-T1, M-T2}). By using a layer potential analysis, Dindo\u{s} and Mitrea \cite{D-M} have obtained the well-posedness of the Poisson problem for the Stokes system on $C^1$, or, more generally, Lipschitz domains in a compact Riemannian manifold, with data in Sobolev and Besov spaces.
The authors in \cite{10-new2} have extended the notion of the Brinkman differential operator from the Euclidean setting to compact Riemannian manifolds, and defined pseudodifferential Brinkman operators as operators with variable coefficients on compact Riemannian manifolds. They have investigated the well-posedness of related transmission problems in $L^2$-based Sobolev spaces on Lipschitz domains in compact Riemannian manifolds of dimension $m\geq 2$. The authors in \cite{JDDE} have obtained well-posedness results in $L^2$-based Sobolev spaces for Poisson-transmission problems expressed in terms of $L^{\infty }$-perturbations of the Stokes system on complementary Lipschitz domains in a compact Riemannian manifold of arbitrary dimension $n\geq 2$, with a parameter $\mu \in (0,1)$ involved in one of the transmission conditions.

Choe and Kim \cite{Choe-Kim} obtained well-posedness results for the Dirichlet problem for the Navier-Stokes system on a bounded Lipschitz domain in ${\mathbb R}^3$ with connected boundary. Russo and Tartaglione \cite{Russo-Tartaglione-2} used a double-layer potential formulation in the analysis of the Robin problem for the Stokes and Navier-Stokes systems in bounded or exterior Lipschitz domains in ${\mathbb R}^3$ (see also \cite{Russo-Starita,Russo-Tartaglione}).
The authors in \cite{K-L-W1-new} have combined a layer potential analysis with the Schauder fixed point theorem to show the existence of solutions for a Poisson problem of a semilinear Brinkman system on a bounded Lipschitz domain in ${\mathbb R}^n$ ($n\geq 2$) with Dirichlet or Robin boundary condition and given data in Sobolev and Besov spaces. Existence results for boundary value problems of Robin type for the Brinkman system and the nonlinear Darcy-Forchheimer-Brinkman system in bounded Lipschitz domains in Euclidean setting have been obtained in \cite{K-L-W1} (see also \cite{K-L-W, K-L-W3, K-L-W4, Mirela-Sergey}). Recently, the authors in \cite{K-L-M-W} have obtained existence and uniqueness results in weighted Sobolev spaces for transmission problems for the non-linear Darcy-Forchheimer-Brinkman system and the linear Stokes system in two complementary Lipschitz domains in ${\mathbb R}^3$, by exploiting a layer potential method for the Stokes and Brinkman systems combined with a fixed point theorem.

Using variational arguments,  Amrouche and Nguyen in \cite{Amrouche-1} obtained existence and uniqueness results in weighted Sobolev spaces for the Poisson problem of Dirichlet type associated with the Navier-Stokes system in exterior Lipschitz domains in ${\mathbb R}^3$. Amrouche and Rodr\'{i}guez-Bellido in \cite{A-R} studied boundary problems for Stokes, Oseen and Navier-Stokes systems with singular data (see also \cite{Galdi}).

Dindo\u{s} and Mitrea in \cite{D-M-new} have combined a fixed point theorem with well-posedness results from the linear theory for the Poisson problem corresponding to the Laplace operator in Sobolev and Besov spaces on Lipschitz domains in compact Riemannian manifolds, and developed a sharp theory for semilinear Poisson problems of Dirichlet and Neumann type for $L^{\infty }$-perturbations of the Laplace-Beltrami operator on Lipschitz domains in compact Riemannian manifolds.
{In \cite{D-M} they} also obtained existence and uniqueness results for the Poisson problem of Dirichlet type for the Navier-Stokes system on a Lipschitz domain in a compact Riemannian manifold with data in Sobolev and Besov spaces.
}

The purpose of this paper is to study boundary value problems of transmission type for the Navier-Stokes and Darcy-Forchheimer-Brinkman systems in two complementary Lipschitz domains of a compact Riemannian manifold of dimension $m\in \{2,3\}$. One of the transmission conditions is expressed in terms of a parameter $\mu \in (0,\infty )$, and another one in terms of an $L^{\infty}$-symmetric tensor field ${\mathcal P}$, which satisfies a positivity condition. Due to the choice of the range of the parameter $\mu $ and of conditions satisfied by ${\mathcal P}$, our results in the linear case extend those in \cite[Theorems 4.1 and 5.1]{10-new2}. First, we use a boundary integral method for the Stokes system
to show the well-posedness result in $L^2$-based Sobolev spaces {(with a range of smoothness index)} for the Poisson problem of transmission type associated with the Stokes system in such Lipschitz domains (see Theorem \ref{Poisson-transmission-S0}). Next, we use this well-posedness result and the invertibility of a related operator in order to show the well-posedness result in $L^2$-based Sobolev spaces for the Poisson problem of transmission type associated with the Stokes and Brinkman systems in two complementary Lipschitz domains of a compact Riemannian manifold of dimension $m\geq 2$ (see Theorem   \ref{Poisson-transmission-Stokes}). Then, based on the well-posedness result in the linear case combined with a fixed point theorem, we obtain an existence and uniqueness result for the transmission problem corresponding to the nonlinear Navier-Stokes and Darcy-Forchheimer-Brinkman systems, when the given data are suitably small in $L^2$-based Sobolev spaces. Due to technical details which require the continuity of some Sobolev embeddings we need to restrict our analysis to compact Riemannian manifolds of dimension $m\in \{2,3\}$ (see Theorem \ref{Poisson-transmission-NS-DFB-0}). The existence result for the nonlinear problems developed in this paper extends some results obtained in \cite{K-L-M-W} in the context of Euclidean setting to the case of compact Riemannian manifolds.

\section{Preliminaries}
\label{sec:2}
In this section we present the differential operators in systems \eqref{matrix-pseudo-diff1}-\eqref{N-S} on a Lipschitz domain in a compact Riemannian manifold.

Let $(M,\langle \cdot,\cdot \rangle )$ be an infinitely smooth, compact, boundaryless manifold\footnote{ Due to the Whitney--Nash theorem (see, e.g., \cite[Theorem 10.15]{Lee} and \cite[Theorem 2]{Nash} the compact Riemannian manifold M, $\dim (M)=m$, admits a smooth isometrical embedding into ${\mathbb R}^n$ with $2n=m(3m+11)$ as a closed submanifold.} of dimension $m\geq 2$, equipped with a smooth Riemannian metric tensor {$g=g_{jk}dx^j\otimes dx^k$}. Here and all along the paper, we use the repeated index summation convention. We denote by $(g^{jk})$ the inverse of $(g_{jk})$, i.e., $g^{j\ell }g_{\ell k}=\delta _{jk}$. Also, denote $g:={\rm{det}}(g_{jk})$, {and let $g>0$}. As usual, $T(M)\!=\!\bigcup _{p\in M}{T}_p(M)$ denotes the tangent bundle of $M$, where $T_pM$ is the tangent space at $p\in M$, and $T^*(M)\!=\!\bigcup _{p\in M}{T}_p^*(M)$ is the cotangent bundle. Let\footnote{{A {\it vector field} $X$ on $M$ is a map $X:M\to TM$, such that $\pi \circ X$ is the identity, where $\pi $ is the projection from $TM$ to $M$, and $X$ is {\it smooth} if $X:M\to TM$ is smooth}.} $\mathfrak{X}(M)$ be the space of {infinitely} smooth vector fields on $M$ and let $\Lambda ^1TM$ be the first exterior power bundle corresponding to $TM$, i.e., the space of differential one forms on $M$. If {${\bf x}\!=\!(x^1,\cdots ,x^m)$ is a local coordinate system in one chart $(U,\varphi )$ on $M$ around $p\in U$, with $\varphi (p)\!=\!0$, then the maps
\begin{align}
\partial _i:C^{\infty }(U)\to {\mathbb R},\ f\mapsto \frac{\partial f\circ \varphi ^{-1}}{\partial x^i}(0),\ 1=1,\ldots ,m\nonumber
\end{align}
are tangent vectors at $p$. They are linearly independent and determine a basis of $T_pM$.}
Then any smooth vector field ${\bf v}$ can be written as ${\bf v}=v^k{\partial }_k$, where the components $v^k$ of ${\bf v}$ are smooth functions on the domain of ${\bf x}$. In addition,
\begin{equation}
\label{inner-prduct}
\langle \cdot ,\cdot\rangle :{\mathfrak X}(U)\times {\mathfrak X}(U)\to C^{\infty }(U),\ \langle X,Y\rangle =X^jg_{jk}Y^k,\ \forall \ X,Y\in {\mathfrak X}(U),
\end{equation}
is a symmetric positive definite tensor field of type $(0,2)$.

{The gradient and divergence operators are defined, locally, on $M$, as\footnote{When the metric tensor is the identity one, we recover the usual definition of the gradient and divergence operators in the Euclidean setting, {and $\partial _jf\equiv \dfrac{\partial f}{\partial x^j}$}.}} (cf. e.g., \cite[(3.11), (3.12)]{Taylor})}
\begin{align}
\label{gradient-operator}
&{\rm{grad}}\,f:=(g^{jk}\partial _jf)\partial _k,\ \forall \ f\in C^{\infty }(M),
\end{align}
\begin{align}
&{\rm{div}}\, X:=\frac{1}{\sqrt{g}}\partial _k(\sqrt{g}X^k),\ \ \forall \ X=X^j\partial _j\in TM.
\end{align}
For each $p\in M$, the cotangent space ${T}_p^*(M)$ can be naturally identified, via the scalar product $\langle\cdot,\cdot\rangle_{_p}$, with the tangent space ${T}_p(M)$, and hence the cotangent bundle $T^*M$ can be identified with the tangent bundle $T(M)$, since the Riemannian metric on $T(M)$ transfers also to $T^*M$ . In addition, the space of differential one forms $\Lambda ^1TM$ can be identified with the space of smooth vector fields $\mathfrak{X}(M)$ via the isometry {$\partial _j\longmapsto g_{j\ell }dx^{\ell }$}, whose inverse is {$dx^j=g^{j\ell }\partial _\ell $}. Hence, a vector field $X=X^k\partial _k\in {\mathfrak X}(M)$ can be identified with the one-form {$X_jdx^j=X^kg_{kj}dx^j$}, where $X_j=g_{jk}X^k$, $X^k=g^{k\ell }X_\ell$. According to \eqref{inner-prduct} we have the pointwise inner product of forms
\begin{equation}
\label{inner-prduct-1}
{\langle dx^j,dx^k\rangle =g^{jk}},\ \ \langle X,Y\rangle =X_jg^{jk}Y_k=X^kg_{k\ell }Y^\ell =X^kY_k.
\end{equation}
With respect to the above inner product, the operators ${\rm{grad}}$ and $-{\rm{div}}$ are adjoint to each other. Consequently, the notation $\langle \cdot ,\cdot \rangle $ is used also for the pointwise inner product of differential one forms. The same notation, $\langle \cdot ,\cdot \rangle $, will be also used for the pairing between two dual spaces. Nevertheless, its meaning will be understood from the context. Let us also note that the volume element in $M$, $d{\rm{Vol}}$, is given in local coordinates by { $d{\rm{Vol}}=g^{\frac{1}{2}}dx^1\cdots dx^m$}.

Identification between the spaces $T^*M$ and $TM$ and $\Lambda ^1TM$ with $\mathfrak{X}(M)$ leads to the identification of the gradient operator ${\rm{grad }}:C^{\infty }(M)\to {\mathfrak X}(M)$ with the exterior derivative operator
\begin{equation}
\label{d}
d:C^{\infty }(M)\to C^{\infty }(M, \Lambda ^1TM),\ \ {d=\partial _jdx^j},
\end{equation}
and to the identification of the differential operator $-{\rm{div }}:{\mathfrak X}(M)\to C^{\infty }(M)$ with the exterior co-derivative operator
\begin{equation}
\label{delta}
\delta :C^{\infty }(M, \Lambda ^1TM)\to C^{\infty }(M),\ \ \delta =d^*.
\end{equation}
Further details about differential geometry on manifolds can be consulted in, e.g., \cite[Section 1]{D-M-M}, \cite[Chapters I-V]{Lang}, \cite{M-T}, \cite[Appendix B]{Taylor}, \cite[Part II]{Wloka}.

\subsection{Sobolev spaces and related results}

{Let ${\mathcal D}(M)$ be the space of infinitely differentiable functions in $M$ and ${\mathcal D}'(M)$ be the corresponding space of distributions on $M$, i.e., the dual of ${\mathcal D}(M)$. The spaces ${\mathcal D}'(M,\Lambda ^1TM)$ and ${\mathcal D}'(M,S^2T^*M)$ can be similarly defined.}
The Lebesgue space of (equivalence classes of) measurable, square integrable functions on $M$ is denoted by $L^2(M)$, and $L^{\infty }(M)$ is the space of (equivalence
classes of) essentially bounded measurable functions on $M$. Let $s\in {\mathbb R}$. Then the $L^2$-based Sobolev (Bessel potential) space $H^s({\mathbb R}^m)$ is defined by
$$H^s({\mathbb R}^m):=\left\{({\mathbb I}-\triangle )^{-\frac{s}{2}}f:f\in L^2({\mathbb R}^m)\right\}.$$ Let $\{(U_i,\varphi _i)\}_i$ be a collection of charts such that $\{U_i\}_i$ determines a finite cover of $M$. Then for any distribution $\Phi \in C_0^{\infty }(U_i)^*$, the pull-back $\varphi _i^*(\Phi )\in C_0^{\infty }(\varphi _i(U_i))^*$ is defined by $\varphi _i^*(\Phi )(f):=\Phi (f\circ \varphi _i)$ for all $f\in C_0^{\infty }(\varphi _i(U_i))$. Extending $\varphi _i^*(\Phi )$ by zero outside $\varphi _i(U_i)$, $\varphi _i^*(\Phi )$ can be treated as a distribution on ${\mathbb R}^m$. Let $\{\chi _i\}_i$ be a smooth partition of unity subordinate to $\{U_i\}_i$. Then for $s\in {\mathbb R}$, $H^s(M)$ is the space of all distributions $\Phi $ defined in $M$ such that
\begin{equation}
\label{Sobolev-M}
\|\Phi \|_{H^s(M)}=\sum _i\|\varphi ^*(\chi _i\Phi )\|_{H^s({\mathbb R}^m)}
\end{equation}
(see, e.g., \cite[Definition 3]{Holst}).
Note that the space $H^{-s}(M)$ is the dual of $H^s(M)$, and $H^s(M,\Lambda^1TM):=H^s(M)\otimes \Lambda^1TM$ is the Sobolev space of one forms on $M$ (see, e.g., \cite[Chapter 4, Section 3]{Taylor}, \cite[Chapter 8]{Wloka}).

Let $\Omega _{+}:=\Omega \subset M$ denote a Lipschitz domain, i.e., an open connected set whose boundary is locally the graph of a Lipschitz function.
{Throughout the paper we adopt the following
\begin{assumption}
\label{connected-set}
$\Omega _{-}:=M\setminus \overline{\Omega }$ is {\it a connected and non-empty set.}
\end{assumption}}
We also consider the $L^2$-based (Bessel potential) Sobolev spaces on $\Omega $
\begin{align}
\label{space-1}
H^s(\Omega ):=\{f|_{\Omega }: f\in H^s(M)\},\, \widetilde{H}^s(\Omega ):=\{f\in H^s(M): {\rm{supp }}f\subseteq \overline{\Omega }\},
\end{align}
and note that for any $s\in {\mathbb R}$ (see also \cite[Chapter 4, Section 3]{Taylor}, \cite[(4.14)]{M-T2})
\begin{equation}
\label{duality-spaces}
\left(H^s(\Omega )\right)'=\widetilde{H}^{-s}(\Omega ),\
{H}^{-s}(\Omega )=\big(\widetilde{H}^{s}(\Omega )\big)'.
\end{equation}
The $L^2$-based Sobolev spaces of one forms on $\Omega $ are given by
\begin{align}
\label{space-2}
\!\!\!\!\!{H}^{s}(\Omega ,\Lambda ^1TM)\!:=\!{H}^{s}(\Omega )\!\otimes \!\Lambda ^1TM|_{\Omega },\, \widetilde{H}^{s}(\Omega ,\Lambda ^1TM)\!:=\!\widetilde{H}^{s}(\Omega )\!\otimes \!\Lambda ^1TM.
\end{align}

Now let $s\in [0,1)$. Then the boundary Sobolev space ${H}^s(\partial \Omega )$ can be defined by using the space ${H}^s({\mathbb R}^{n-1})$, a partition of unity and pull-back. In addition, we have ${H}^{-s}(\partial \Omega )=\left({H}^s(\partial \Omega )\right)'$, and ${H}^{s}(\partial \Omega ,\Lambda ^1TM):={H}^{s}(\partial \Omega )\otimes \Lambda ^1TM|_{\partial \Omega }$ is the $L^2$-based boundary Sobolev space of one forms.

For further details related to Sobolev spaces on compact manifolds we refer the reader to \cite[Chapter 4]{Taylor}, \cite[Chapter 8]{Wloka}, \cite{Agr-2}[Chapter 2], \cite{D-M-new, M-T2, Triebel}.

A useful result for the problems we are going to investigate is the following trace lemma (see \cite{Co}, \cite[Proposition 3.3]{J-K1}, \cite[Theorem 2.3, Lemma 2.6]{Mikh}, \cite{Mikh-3}, \cite[Theorem 2.5.2]{M-W}):
\begin{lem}
\label{trace-operator1}
Assume that $\Omega \subset M$ is a bounded Lipschitz domain with boundary $\partial \Omega $, and set $\Omega _{+}:=\Omega $ and $\Omega _{-}:=M\setminus \overline{\Omega }$. Let $s\in (0,1)$. Then there exist linear and continuous trace operators\footnote{The trace operators defined on Sobolev spaces of one forms on the domains ${\Omega }_{\pm }$ are also denoted by $\gamma _{\pm }$.} $\gamma _{\pm }:{H}^{s+\frac{1}{2}}(\Omega _{\pm })\to H^s({\partial\Omega })$ such that $\gamma _{\pm }f=f|_{\partial \Omega }$ for any $f\in C^{\infty }(\overline{\Omega }_\pm )$. These operators are surjective and have $($non-unique$)$ linear and continuous right inverse operators $\gamma _{\pm }^{-1}:H^s({\partial\Omega })\to {H}^{s+\frac{1}{2}}(\Omega _\pm ).$
\end{lem}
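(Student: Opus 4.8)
The plan is to establish the trace lemma by the standard localization-and-flattening technique, reducing the geometric statement on the manifold to the already-known Euclidean (or half-space) trace theorem for Lipschitz graph domains. First I would fix a finite atlas of charts $\{(U_i,\varphi_i)\}_i$ covering $\partial\Omega$, chosen small enough that in each chart the boundary $\partial\Omega$ is represented as the graph of a Lipschitz function $\psi_i:\mathbb{R}^{m-1}\to\mathbb{R}$, with $\Omega_+$ lying on one side and $\Omega_-$ on the other. Pick a smooth partition of unity $\{\chi_i\}_i$ subordinate to this cover. By definition of the Sobolev norms in \eqref{Sobolev-M} and the corresponding boundary norm on $H^s(\partial\Omega)$, it suffices to prove the trace estimate and surjectivity for each localized piece $\chi_i f$ and then sum, since the norms are defined as finite sums over the charts.

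Next I would push each localized piece forward through $\varphi_i$ to obtain a function on a Lipschitz graph domain in $\mathbb{R}^m$. The key point here is that the chart diffeomorphisms, together with the smooth metric tensor $g$, are bi-Lipschitz and smooth, so pull-back and push-forward induce isomorphisms between $H^{s+\frac12}(\Omega_\pm)$ and the corresponding Euclidean Sobolev spaces on the half-space side of the graph, and between $H^s(\partial\Omega)$ and $H^s$ of the flattened boundary; these are precisely the maps used to define the norms in the first place. On the flattened configuration I would invoke the classical trace theorem for Lipschitz domains in $\mathbb{R}^m$ (as recorded in the cited references, e.g. \cite[Theorem 2.5.2]{M-W}, \cite[Proposition 3.3]{J-K1}), which provides a bounded surjective trace $\gamma$ from $H^{s+\frac12}$ of the graph domain onto $H^s$ of its boundary, together with a bounded linear right inverse, for $s\in(0,1)$. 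Transporting these operators back through $\varphi_i^{-1}$ and reassembling via the partition of unity yields the global operators $\gamma_\pm$ and their right inverses $\gamma_\pm^{-1}$, with the asserted continuity bounds following from finiteness of the atlas.

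For the one-form version one simply applies the scalar result componentwise in each chart, using the identifications \eqref{space-2} of the Sobolev spaces of one forms as tensor products $H^s\otimes\Lambda^1TM$; the smoothness of the metric ensures the change-of-frame coefficients $g_{jk}$ are bounded and do not disturb the estimates. Consistency of $\gamma_\pm f$ with the naive restriction $f|_{\partial\Omega}$ for $f\in C^\infty(\overline{\Omega}_\pm)$ is immediate, since the trace operator restricted to smooth functions is literally restriction to the boundary in every chart.

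The main obstacle I anticipate is not any single deep estimate but rather the bookkeeping needed to verify that the localized pieces glue consistently, in particular that the partition-of-unity decomposition interacts correctly with the right inverses: cutting off by $\chi_i$ and reconstructing a global right inverse requires checking that the sum $\gamma_\pm^{-1}=\sum_i (\text{local right inverse applied to } \chi_i\cdot)$ indeed lands in $H^{s+\frac12}(\Omega_\pm)$ and satisfies $\gamma_\pm\circ\gamma_\pm^{-1}=\mathrm{Id}$, since the individual cut-off functions need not respect the range of each local trace. This is handled by the standard device of choosing a second partition of unity equal to one on the support of the first, so that the reconstruction is exact; the non-uniqueness of $\gamma_\pm^{-1}$ reflects precisely this freedom of choice. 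The restriction $s\in(0,1)$ is essential and enters through the Euclidean trace theorem on Lipschitz graphs, where the boundary is only Lipschitz-regular and higher smoothness of the trace cannot be expected.
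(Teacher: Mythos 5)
The paper does not actually prove Lemma \ref{trace-operator1}: it is recorded as a known result, with the proof delegated entirely to the cited literature (\cite{Co}, \cite[Proposition 3.3]{J-K1}, \cite[Theorem 2.3, Lemma 2.6]{Mikh}, \cite{Mikh-3}, \cite[Theorem 2.5.2]{M-W}), so there is no in-paper argument to match yours against. Your localization-and-reassembly sketch is precisely the standard derivation underlying those citations, and it is correct as structured: the charts are smooth diffeomorphisms (the manifold $M$ is $C^\infty$; only $\partial\Omega$ is merely Lipschitz), so transporting $H^{s+\frac{1}{2}}(\Omega_\pm)$ to Euclidean space chartwise is legitimate for every $s$, after which the Euclidean trace theorem on a Lipschitz graph domain is invoked as a black box, and your device of a second cutoff $\theta_i\in C_c^\infty(U_i)$ with $\theta_i\equiv 1$ on $\mathrm{supp}\,\chi_i$ makes the glued right inverse exact, since $\gamma_\pm\bigl(\theta_i E_i(\chi_i g)\bigr)=\chi_i g$ and summing over the finite atlas gives $\gamma_\pm\circ\gamma_\pm^{-1}=\mathrm{Id}$. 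One point deserves sharper wording than your phrase ``on the flattened configuration'': you may flatten the boundary via the graph map only for the \emph{boundary} spaces $H^s(\partial\Omega)$ with $s\in(0,1)$ (indeed that is how they are defined), but you must not flatten the Lipschitz graph for the \emph{interior} spaces, because a bi-Lipschitz change of variables preserves $H^t$ only for $|t|\le 1$, whereas $s+\frac{1}{2}\in\bigl(\frac{1}{2},\frac{3}{2}\bigr)$ exceeds $1$ when $s>\frac{1}{2}$; since you invoke the Lipschitz-domain trace theorem of \cite[Theorem 2.5.2]{M-W} rather than a half-space theorem, your argument implicitly respects this, but the distinction is exactly where a careless version of this proof would fail, and it is also why the restriction $s\in(0,1)$ is sharp for Lipschitz boundaries.
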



\subsection{\bf The deformation operator}

{Let $\nabla $ denote the Levi-Civita connection associated to the Riemannian metric $g$ of the Riemannian manifold $M$. Let us briefly recall its definition by following, e.g., \cite{Lang, Taylor, Wloka}.}

{An {\it affine connection} on the manifold $M$ is a map
\begin{align}
\nabla :{\mathfrak X}(M)\times {\mathfrak X}(M)\to {\mathfrak X}(M),\ \ (X,Y)\mapsto \nabla _XY\nonumber
\end{align}
satisfying the following conditions\footnote{We use the notation $C^{\infty }(M;R):=C^{\infty }(M)$, and note that $df(X)Y=(Xf)Y$.}:
\begin{itemize}
\item[(ii)]
For each fixed $Y\in {\mathfrak X}(M)$ the map $X\mapsto \nabla _XY$ is $C^{\infty }$-linear, i.e., $\nabla _{fX+hZ}Y=f\nabla _XY+h\nabla _ZY$, for all $f,h\in C^{\infty }(M)$ and $X,Z\in {\mathfrak X}(M)$.
\item[(ii)]
For each $X\in {\mathfrak X}(M)$ the map $Y\mapsto \nabla _XY$ is ${\mathbb R}$-linear, i.e.,
\begin{align}
\label{conormal-derivative-vectors1}
\nabla _X(aY+bZ)=a\nabla _XY+b\nabla _XZ,\ \forall \ a,b\in {\mathbb R},\ \forall\ Y,Z\in {\mathfrak X}(M).
\end{align}
\item[(iii)]
For all $f\in C^{\infty }(M)$,
\begin{align}
\label{conormal-derivative-vectors1n}
&\nabla _X(fY)=df(X)Y+f\nabla _XY.
\end{align}
\end{itemize}
Relation \eqref{conormal-derivative-vectors1n} shows that $\nabla _X$, the covariant derivative along $X$, satisfies the Leibniz product rule, i.e., it acts as a derivation.

An affine connection $\nabla $ on a Riemannian manifold $(M,g)$ is called {\it compatible with $g$} if $\nabla g=0$, or, equivalently, the following compatibility relation with respect to the metric on $M$ holds
\begin{equation}
\label{conormal-derivative-vectors2}
Z\langle X,Y\rangle =\nabla _Z\langle X,Y\rangle =\langle \nabla _ZX,Y\rangle +\langle X,\nabla _ZY\rangle ,\ \forall \ X,Y,Z\in {\mathfrak X}(M),
\end{equation}
where $\langle \cdot ,\cdot \rangle $ is the inner product on tangent vectors, given by \eqref{inner-prduct} (cf., e.g., \cite[Chapter 1 \S 11, Chapter 2 \S 2]{Taylor}).}

{An affine connection $\nabla $ is called {\it torsion-free} if it satisfies the relation\footnote{{If $X,Y\in {\mathfrak X}(M)$, then the {\it Lie bracket} $[X,Y]$ is the smooth vector field given by $$[X,Y](f)=X(Y(f))-Y(X(f)),\ \forall \ f\in C^{\infty }(M)$$
(cf., e.g., \cite[Proposition 1.3]{Lang}).}}
\begin{equation}
\label{conormal-derivative-vectors3}
\nabla _XY - \nabla _YX =[X,Y],\ \forall \ X,Y\in {\mathfrak X}(M).
\end{equation}
An affine connection $\nabla $ on $(M,g)$, which is compatible with the Riemannian metric $g$ and is torsion-free, i.e., it satisfies conditions \eqref{conormal-derivative-vectors1}-\eqref{conormal-derivative-vectors2}, is called a {\it Levi-Civita connection} of $M$.}

{The fundamental theorem of Riemannian geometry asserts that given a Riemannian manifold $(M,g)$, there exists a unique Levi-Civita connection $\nabla $, which is determined uniquely by the torsion-free condition \eqref{conormal-derivative-vectors3}
(cf., e.g., \cite[Proposition 11. 1, Chapter 1 \S11]{Taylor}).}

Let $X$ and $Y$ be vector fields on $M$, with components $X^i$ and $Y^k$. Then the {\it covariant derivative} $\nabla _XY$ of $Y$ with respect to $X$ is given by
\begin{align}
\label{Christoffel3}
\nabla _YX=Y^i\big({\partial _i X^k}+X^j\Gamma _{ij}^k\big)\partial _k,\,
\end{align}
where {$\Gamma _{jk}^{\ell }$ are the Christoffel symbols of the second kind, given by
\begin{align}
\label{Christoffel-second-kind}
\Gamma _{\ell j}^k=2^{-1}g^{kr}\left\{\partial _jg_{r\ell }+\partial _\ell g_{rj}-\partial _r g_{\ell j}\right\},
\end{align}
and $(g^{kr})$ is the inverse of the matrix $(g_{kr})$ (see, e.g., \cite[Chapter 3, (3.9)]{Taylor}).}
{Moreover, if $X,Y,Z\in {\mathfrak X}(M)$, then
\begin{align}
\langle \nabla _ZX,Y\rangle =Z^i\big({\partial _iX^k}+X^j\Gamma _{ij}^k\big)g_{k\ell }Y^{\ell }.
\end{align}}
If $X\in {\mathfrak X}(M)$, then $\nabla X:{\mathfrak X}(M)\times {\mathfrak X}(M)\to C^{\infty }(M)$ is the tensor field of type $(0,2)$ given by
\begin{equation}
\label{nabla1}
(\nabla X)(Y,Z):=\langle \nabla _ZX,Y\rangle ,\ \ \forall \ Y,Z\in {\mathfrak X}(M).
\end{equation}

{In local coordinates and with the notation $X_k=g_{kj}X^j$, $\nabla X$ is given by the matrix of type $m\times m$ with the components\footnote{In many applications, a Riemannian manifold $\widetilde{M}$ of dimension $m$ is embedded into an Euclidean space $({\mathbb R}^n,\langle \cdot ,\cdot \rangle _{{\mathbb R}^n})$ ($m<n$) through a second order continuously differentiable mapping $\boldsymbol{\Psi }:V\subset {\mathbb R}^m\to {\mathbb R}^n$, such that the vectors $\dfrac{\partial \boldsymbol{\Psi }}{\partial x^j}(p)$, $j=1,\ldots ,m$, determine a basis of the tangent space at the point $\boldsymbol{\Psi }(p)\in \widetilde{M}$, and that the scalar product on ${\mathbb R}^n$ is compatible with a metric $\widetilde{g}$ on $\widetilde{M}$, given by $\widetilde{g}_{jk}=\left\langle \dfrac{\partial \boldsymbol{\Psi }}{\partial x^j}, \dfrac{\partial \boldsymbol{\Psi }}{\partial x^k}\right\rangle _{{\mathbb R}^n}$. In this case, the Christoffel symbols corresponding to $\widetilde{g}$ are given by the formula $\widetilde{\Gamma }_{i j}^\ell =\widetilde{g}^{\ell r}\left\langle \dfrac{\partial ^2 \boldsymbol{\Psi }}{\partial x^i\partial x^j},\dfrac{\partial \boldsymbol{\Psi }}{\partial x^r}\right\rangle_{{\mathbb R}^n}$ (see, e.g., \cite[Theorem 3.8.1 (**)]{Kl}, \cite[4.7 Definition (ii)]{Ku}).} $\left({\partial _i}X_\ell -X_k \Gamma_{i\ell}^k\right)_{i,\ell =1,\ldots ,m}$ (see also \cite[Chapter 2, (3.13)]{Taylor}, \cite[(1.10)]{D-M-M}).}

The symmetric part of $\nabla X$ is
denoted by ${\rm{Def }}\, X$ and is called the {\it deformation} of $X$. Consequently,
\begin{equation}
\label{deformation-tensor-new}
({\rm{Def }}\, X)(Y,Z)=\frac{1}{2}\{\langle \nabla _Y X,Z\rangle + \langle \nabla _Z X,Y\rangle \},\quad \forall \ Y, Z \in {\mathfrak X}(M).
\end{equation}
Let ${\mathfrak L}_X$ denote the Lie derivative in the direction of the vector field $X$. Then the deformation tensor ${\rm{Def }}\ X$ can be equivalently defined as ${\rm{Def }}\ X=\frac{1}{2}{\mathfrak L}_Xg$, where $g$ is the metric tensor of $M$ (see, e.g., \cite[p. 958]{M-T}). Therefore, ${\rm{Def }}\ X\in S^2T^*M$, where $S^2T^*M$ denotes the set of symmetric
tensor fields of type $(0,2)$, and the {\it deformation operator}
\begin{equation}
\label{Def}
{\rm{Def }}:C^{\infty }(M,TM)\to C^{\infty }(M, S^2T^*M)
\end{equation}
extends to a linear and continuous operator
\begin{equation}
\label{Def-1}
{\rm{Def }}:H^1(M,\Lambda ^1TM)\to L^2(M,S^2T^*M).
\end{equation}
If $X\in {\mathfrak X}(M)$, $X=X^j\partial _j$, we use the notation $X_{j;\ell }:=\partial _\ell X_j-\Gamma _{j\ell }^kX_k$, where $X_{\ell }=g_{\ell k}X^k$. Then ${\rm{Def }}\, X$ has the local representation
\begin{equation}
\label{deformation-tensor-new-sm}
({\rm{Def }}\, X)_{j\ell }=\frac{1}{2}\left(X_{j;\ell }+X_{\ell ;j}\right).
\end{equation}
In addition, the adjoint ${\rm{Def }}^*$ of the operator (\ref{Def}) is given by
${\rm{Def}}^*{\bf u}=-{\rm{div}}\ {\bf u}, \ \forall \ u\in S^2T^*M$
(cf., e.g., \cite[Chapter 2, p. 137]{Taylor}).

A deformation-free vector field $X\in {\mathfrak X}(M)$,
\begin{equation}
\label{Killing-field} {\rm{Def}}\ X=0 \ \mbox{ on }\ M \Longleftrightarrow {\mathfrak L}_Xg=0\ \mbox{ on }\ M,
\end{equation}
is called a {\it Killing field}. Thus, the flow generated by such a field consists of isometries, that is, the flow leaves the metric $g$ invariant. 
Moreover, by \eqref{deformation-tensor-new-sm}, $X$ is a Killing field (i.e., $X$ generates a group of isometries) if and only if (cf., e.g., \cite[Chapter 2, (3.33)]{Taylor}, see also \cite{Semmelmann})
\begin{equation}
\label{Killing-field-local}
X_{j;\ell }+X_{\ell ;j}=0, \ \ j, \ell =1,\ldots ,m.
\end{equation}
Throughout the paper we often need the assumption below (cf. \cite[(3.1)]{M-T}).
\begin{assumption}
\label{H}
{$M$} is a smooth, compact, boundaryless Riemannian manifold that does not have any non-trivial Killing vector field.
\end{assumption}
Assumption \ref{H} assures the invertibility of the elliptic operator\\ ${\bf L}:C^{\infty }(M,\Lambda ^1TM)\to C^{\infty }(M,\Lambda ^1TM)$ given by \eqref{Laplace-operator},
which {implies existence of} the fundamental solution of the Stokes system on $M$. In addition, the  extended operator ${\bf L}:H^1(M,\Lambda ^1TM)\to H^{-1}(M,\Lambda ^1TM)$ is invertible, as well (see also \cite[(3.3)]{D-M}, \cite[(3.3)]{M-T}).

\subsection{\bf Brinkman, Navier-Stokes and Darcy-Forchheimer-Brinkman systems on a compact Riemannian manifold}

We now consider the second-order elliptic differential operator
\begin{equation}
\label{Laplace-operator}
{\bf L}:\mathfrak{X}(M)\to \mathfrak{X}(M),\quad
{\bf L}:=2{\rm{Def}}^{*}{\rm{Def}}= -\triangle +d\delta -2\rm{Ric},
\end{equation}
where $\triangle :=-(d\delta +\delta d)$ is the Hodge Laplacian and ${\rm{Ric}}$ is the Ricci tensor of $M$ (see, e.g., \cite[(2.6)]{D-M}). In the Euclidean setting and for free divergence vector fields, ${\bf L}$ reduces to the Laplace operator, which is a second order differential operator with constant coefficients. Nevertheless, in the case of a Riemannian manifold, ${\bf L}$ is an elliptic second order differential operator with variable coefficients, as it depends on the metric structure of the manifold, {and is the main operator that appears in the structure of the Stokes system on such a manifold (cf. \cite[pp. 161, 162]{Ebin-Marsd}; see also \cite{D-M-M,M-T}).} For any $s\in (0,1)$, the operator (\ref{Laplace-operator}) extends to a bounded linear operator
\begin{equation}
\label{Laplace-operator-new}
{\bf L}=2{\rm{Def}^{*}}{\rm{Def}}:H^{s+\frac{1}{2}}(M,\Lambda ^1TM)\to H^{s-\frac{3}{2}}(M,\Lambda ^1TM)
\end{equation}
(see, e.g., \cite[p. 177]{La-Mi}). Let {$V\in {L^{\infty}(M,\Lambda ^1TM\otimes \Lambda ^1TM)}$} be a given {symmetric} tensor field which satisfies the following positivity condition with respect to the $L^2(M,\Lambda ^1TM)$-inner product (denoted by $\langle \cdot ,\cdot \rangle _M$)
\begin{equation}
\label{V-positive}
{\langle V{\bf v},{\bf v}\rangle _M\geq 0,\ \forall \ {\bf v}\in L^2(M,\Lambda ^1TM).}
\end{equation}
This condition implies immediately that
\begin{equation}
\label{V-positive-open}
\langle V{\bf u},{\bf u}\rangle _{\Omega }\geq 0,\ \forall \ {\bf u}\in L^2(\Omega,\Lambda ^1TM).
\end{equation}
Then we define the {\it generalized Brinkman operator} $B_V$ as
\begin{align}
\label{eq:2.24a-new}
&B_V:H^{s+\frac{1}{2}}(M,\Lambda ^1TM)\times H^{s-\frac{1}{2}}(M)\to H^{s-\frac{3}{2}}(M,\Lambda ^1TM)\times H^{s-\frac{1}{2}}(M),\nonumber\\
&B_V:=\left(\begin{array}{cc}
{\bf L} + V\ & \ d\\ \delta \ & \ 0
\end{array}\right)
\end{align}
{(see, e.g., \cite{10-new2}). Note that the operator
\begin{align}
\left(\begin{array}{cc}
V & d\\
\delta & 0
\end{array}
\right):H^{s+\frac{1}{2}}(M,\Lambda ^1TM)\!\times \!H^{s-\frac{1}{2}}(M)\!\to \!H^{s-\frac{3}{2}}(M,\Lambda ^1TM)\!\times \!H^{s-\frac{1}{2}}(M)\nonumber
\end{align}
is compact, due to the continuity of $V:H^{s+\frac{1}{2}}(M,\Lambda ^1TM)\to L^2(M,\Lambda ^1TM)$ and the compactness of the embedding $L^2(M,\Lambda ^1TM)\hookrightarrow H^{s-\frac{3}{2}}(M,\Lambda ^1TM)$. Therefore, $B_V$ appears as a compact perturbation of the {\it Stokes operator $B_0$}. In particular, if $V\equiv \alpha {\mathbb I}$, where ${\mathbb I}$ is the identity operator and $\alpha >0$ is a constant, then the operator}
\begin{align}
\label{eq:2.24a}
&B_\alpha :H^{s+\frac{1}{2}}(\Omega ,\Lambda ^1TM)\times H^{s-\frac{1}{2}}(\Omega )\to H^{s-\frac{3}{2}}(\Omega ,\Lambda ^1TM)\times H^{s-\frac{1}{2}}(\Omega ),\nonumber
\end{align}
\begin{align}
&B_\alpha :=
\left(\begin{array}{cc}
{\bf L} + \alpha {\mathbb I}\ & \ d\\ \delta \ & \ 0
\end{array}\right)
\end{align}
is called the {\it Brinkman operator}. Both {operators, $B_\alpha $ and $B_0$,} are Agmon-Douglis-Nirenberg elliptic (see, e.g., \cite{H-W,10-new2}), and, in the Euclidean setting, they play a main role in fluid mechanics and porous media (see, e.g., \cite{Ni-Be}).

Let $s\in (0,1)$. Then the nonlinear system
\begin{equation}
\label{eq:2.24a-ms}
\left\{\begin{array}{lll}
{\bf L}{\bf u}+\nabla _{\bf u}{\bf u}+d\pi ={\bf f}\in H^{s-\frac{3}{2}}(\Omega ,\Lambda ^1TM)\ \mbox{ in }\ \Omega ,\\
\delta {\bf u}=0\ \mbox{ in }\ \Omega ,
\end{array}\right.
\end{equation}
given in terms of the Levi-Civita connection and with the unknowns $({\bf u},\pi )\in H^{s+\frac{1}{2}}(\Omega ,\Lambda ^1TM)\times H^{s-\frac{1}{2}}(\Omega )$, is the {\it Navier-Stokes system} on the compact Riemannian manifold $M$ (cf., e.g., \cite{Ebin-Marsd}; see also \cite{M-T,D-M,H-M-T}). Recall that $\Gamma _{rj}^\ell$ are the Christoffel symbols of second kind associated to the metric $g$ of $M$, and note that, if ${\bf u}=u^j\partial _j$, then $\nabla _{{\bf u}}{\bf u}$ has the local representation
\begin{equation}
\label{locally}
\left(\nabla _{{\bf u}}{\bf u}\right)^\ell =u^j\partial _ju^\ell+\Gamma _{rj}^\ell u^ru^j.
\end{equation}
Let $k,\beta >0$ be given constants and {$V\in {L^{\infty }(M,\Lambda ^1TM\otimes \Lambda ^1TM)}$} be a { symmetric} tensor field satisfying the positivity condition \eqref{V-positive}. Taking into account the form (\ref{eq:2.24a-ms}) of the Navier-Stokes system, we now define the nonlinear {\it generalized Darcy-Forchheimer-Brinkman system} on the compact Riemannian manifold $M$, as
\begin{equation}
\label{eq:2.24a-sm-new}
\left\{\begin{array}{lll}
{\bf L}{\bf v}+V{\bf v}+k|{\bf v}|{\bf v}+\beta \nabla _{\bf v}{\bf v}+dp={\bf f}\in H^{s-\frac{3}{2}}(\Omega ,\Lambda ^1TM)\ \mbox{ in }\ \Omega ,\\
\delta {\bf u}=0\ \mbox{ in }\ \Omega ,
\end{array}\right.
\end{equation}
where $({\bf v},p)\in H^{s+\frac{1}{2}}(\Omega ,\Lambda ^1TM)\times H^{s-\frac{1}{2}}(\Omega )$ are the unknowns of this system. Having in view the form of this system in the Euclidean setting (see, e.g., \cite{Ni-Be}), in the particular case $V\equiv \alpha {\mathbb I}$, where $\alpha >0$ is a constant, we obtain the nonlinear {\it Darcy-Forchheimer-Brinkman system} on a compact manifold, as
\begin{equation}
\label{eq:2.24a-sm}
\left\{\begin{array}{lll}
{\bf L}{\bf v}+\alpha {\bf v}+k|{\bf v}|{\bf v}+\beta \nabla _{\bf v}{\bf v}+dp={\bf f}\in H^{s-\frac{3}{2}}(\Omega ,\Lambda ^1TM)\ \mbox{ in }\ \Omega ,\\
\delta {\bf u}=0\ \mbox{ in }\ \Omega .
\end{array}\right.
\end{equation}

\begin{itemize}
\item[$\bullet $]
{Throughout the paper we assume that $\dim (M)\!\in \!\{2,3\}$, {\it whenever the Navier-Stokes and Darcy-Forchheimer-Brinkman systems are involved}}.
\end{itemize}

\subsection{\bf The conormal derivative operator for the Brinkman system}

If $\Omega \subset M$ is a Lipschitz domain, denote by $d\sigma $ the surface measure on {its boundary, $\partial \Omega $,} and by $\nu $ the outward unit {normal}, which is defined a.e. on $\partial \Omega $, with respect to $d\sigma $. Let $\Omega _{+}:=\Omega $ and $\Omega _{-}:=M\setminus \overline{\Omega }$.
{Let $S$ either be an open subset or a surface in $M$. Then, all along the paper, we use the
notation $\langle \cdot ,\cdot \rangle _S$ for the duality pairing of two dual Sobolev spaces defined on $S$.}
Let $({\bf u},\pi)\in C^1(\overline\Omega_{\pm},\Lambda ^1TM)\times C^0(\overline\Omega_{\pm})$. {Then} the
{\it interior and exterior conormal derivatives or the traction fields} ${{\bf t}^\pm}({\bf u},\pi )$ for the {\it {generalized} incompressible Brinkman} (or {\it Stokes}) {\it operator} \eqref{eq:2.24a} are defined by using the formula
\begin{align}
\label{2.37-}
{\bf t}^{\pm}({\bf u},\pi ):={\gamma _\pm}\left(-\pi{\mathbb I}+2{\rm{Def}}({\bf u})\right)\nu ,
\end{align}
where $\nu{=\nu^+}$ is the outward unit normal to $\Omega_{ +}$, defined a.e. on $\partial {\Omega }$. Then {for the generalized incompressible Brinkman operator} we obtain the first Green identity
\begin{align}
\label{special-case-1}
{\pm}\left\langle {\bf t}^{\pm}({\bf u},\pi ),{{\bf w}} \right\rangle _{_{\partial\Omega  }}=&2\langle {\rm{Def}}\,{\bf u},{\rm{Def}}\,{{\bf w}} \rangle _{\Omega_{\pm}}+\langle{V} {\bf u},{{\bf w}} \rangle _{\Omega_{\pm}}+\langle \pi ,\delta {{\bf w}} \rangle _{\Omega_{\pm}}\nonumber\\
&-\left\langle {\mathcal L}_{V}({\bf u},\pi ),{{\bf w}} \right\rangle _{{\Omega_{\pm}}}, \forall \ {{\bf w}} \in {\mathcal D}(M,\Lambda ^1TM),
\end{align}
where
\begin{align}
\label{space-Brinkman-generalized-Robin-new}
{\mathcal L}_{V}({\bf u},\pi ):={\bf L}{\bf u}+V{\bf u}+d\pi.
\end{align}

Let {$\mathring E_\pm$} be the operator of extension of functions {from $H^{q}(\Omega _{\pm })$ or vector fields (one forms) from $H^{q}(\Omega _{\pm },\Lambda ^1TM)$ or tensor fields from {$H^{q}(\Omega _{\pm },S^2TM)$}, ${q}\ge 0$,} by zero on $M\setminus \Omega_{\pm }$. Following the proof of Theorem 2.16 in \cite{Mikh}, let us define the operator $\widetilde E_\pm$ on $H^{q}(\Omega _{\pm })$, for $0\le {q}<\frac{1}{2}$, as
\begin{align}
\label{ext-1}
\widetilde E_\pm:=\mathring E_\pm ,
\end{align}
and for $-\frac{1}{2}<{q}<0$, as
\begin{align}
\label{ext-2}
\!\!\!\!\!\langle\widetilde E_\pm h,v\rangle_{\Omega _\pm}:=\langle h, \widetilde E_\pm v\rangle_{\Omega _\pm}\!=\!{\langle h, \mathring E_\pm v\rangle_{\Omega _\pm}},\
h\in H^{q}(\Omega _{\pm }),\, v\in H^{-q}(\Omega _{\pm }).
\end{align}
Then, evidently ${\widetilde E}_\pm: H^{q}(\Omega_\pm)\to \widetilde H^{q}(\Omega_\pm)$, $-1/2<q<1/2$, is a bounded linear {extension} operator.
Similar definition and properties hold also for vector and tensor fields.

If $s\in (0,1)$ and $({\bf u},\pi )\in H^{s+\frac{1}{2}}(\Omega _{\pm },\Lambda ^1TM)\times H^{s-\frac{1}{2}}(\Omega _{\pm })$, we {have}
$
{\mathcal L}_{V}({\bf u},\pi )
\in H^{s-\frac{3}{2}}(\Omega _{\pm },\Lambda ^1TM).
$
Then identity \eqref{special-case-1} {suggests} the following weak definition of the conormal derivative in the setting of $L^2$-based Sobolev spaces on Lipschitz domains in compact Riemannian manifolds (cf. \cite[Lemma 3.2]{Co}, {\cite[Lemma 2.2]{K-L-W1},}
\cite[Definition 3.1, Theorem 3.2]{Mikh}, \cite[Definition 5.2]{Mikh-3}, \cite[Definition 2.3]{Mikh-2015}, \cite[Proposition 3.6]{M-M-W}, \cite[Theorem 10.4.1]{M-W} in the Euclidean setting, and \cite[Lemma 2.4]{10-new2} in the case of a matrix operator of type (\ref{eq:2.24a}) on Riemannian manifolds).
{\begin{defn}
\label{conormal-derivative-generalized}
Let $M$ be a compact Riemannian manifold with ${\rm{dim}}(M)\geq 2$. Let $\Omega _{+}:=\Omega \subset M$ be a Lipschitz domain. Let $\Omega _{-}:=M\setminus \overline{\Omega }$. Let $s\in (0,1)$ and {$V\in L^{\infty }(M,\Lambda ^1TM\otimes \Lambda ^1TM)$} be a {symmetric} tensor field satisfying the positivity condition \eqref{V-positive}. We consider the space\footnote{The condition in (\ref{space-Brinkman-generalized-Robin}) is suggested by the linear form of the incompressible Navier-Sokes equation in the Euclidean setting.}
\begin{align}
\label{space-Brinkman-generalized-Robin}
\pmb{\mathcal H}^{s+\frac{1}{2}}(\Omega _{\pm },&{\mathcal L}_{V}):=\Big\{({\bf u},\pi ,\tilde{\bf f})\in H^{s+\frac{1}{2}}(\Omega _{\pm },\Lambda ^1TM)\times H^{s-\frac{1}{2}}(\Omega _{\pm })\nonumber\\
&\times\widetilde{H}^{s-\frac{3}{2}}(\Omega _{\pm },\Lambda ^1TM):\tilde{\bf f}|_{\Omega _\pm }={\mathcal L}_{V}({\bf u},\pi ) {\text{ in } \Omega _{\pm }}\Big\}.
\end{align}
Then the generalized conormal derivative operator\footnote{The $\pm $ sign in the left-hand side of the formula (\ref{conormal-derivative-generalized-1}) corresponds to the domain $\Omega _\pm $. Also, for the sake of brevity we use the notations ${\bf t}_{V}^{\pm }({\bf v},p)$ instead of
${\bf t}_{V}^{\pm }({\bf v},p;{\bf 0})$, which is in fact the {\it canonical conormal derivative} (cf. \cite[Section 3.3]{Mikh}).}
\begin{align*}
\pmb{\mathcal H}^{s+\frac{1}{2}}(\Omega _{\pm },{\mathcal L}_{V})\ni ({\bf u},\pi ,\tilde{\bf f})\longmapsto {\bf t}_{V}^{\pm }({\bf u},\pi ;\tilde{\bf f})\in H^{s-1}(\partial \Omega ,\Lambda ^1TM).\nonumber
\end{align*}
is defined as
\begin{align}
\label{conormal-derivative-generalized-1}
&\pm \left\langle{\bf t}_{V}^{\pm }({\bf u},\pi ;\tilde{\bf f}),{{\bf\Phi} }\right\rangle _{_{\partial \Omega }}:=2\left\langle {{{\widetilde E}_\pm}\rm{Def}}\ {\bf u},{\rm{Def}}(\gamma _{\pm }^{-1}{{\bf\Phi}})\right\rangle _{\Omega _{\pm }}
+\left\langle {\widetilde E}_\pm (V{\bf u}),\gamma _{\pm }^{-1}{{\bf\Phi}}\right\rangle _{\Omega _{\pm }}\nonumber\\
&+\left\langle {{\widetilde E}_\pm}\pi, \delta (\gamma _{\pm }^{-1}{{\bf\Phi}})\right\rangle _{\Omega _{\pm }}-\langle \tilde{\bf f},\gamma _{\pm }^{-1}{{\bf\Phi}}\rangle _{\Omega _{\pm }},\ \forall \ {\bf\Phi} \in H^{1-s}(\partial \Omega,\Lambda ^1TM).
\end{align}
\end{defn}
\begin{lem}
\label{conormal-derivative-generalized-n1}
{Let $s\in (0,1)$.} Then the generalized conormal derivative operator
$$
{\bf t}_{V}^{\pm }:\pmb{\mathcal H}^{s+\frac{1}{2}}(\Omega _{\pm },{\mathcal L}_{V})\to H^{s-1}(\partial \Omega ,\Lambda ^1TM)
$$
is linear, bounded and independent of the choice of the right inverse\\ $\gamma _{\pm }^{-1}:H^{1-s}(\partial \Omega,\Lambda ^1TM)\to H^{\frac{3}{2}-s}(\Omega,\Lambda ^1TM)$ of the trace operator\\ $\gamma _{\pm }:H^{\frac{3}{2}-s}(\Omega,\Lambda ^1TM)\to H^{1-s}(\partial \Omega,\Lambda ^1TM)$. In addition, the following {Green identity} holds
\begin{align}
\label{conormal-derivative-generalized-2}
&\pm \big\langle{\bf t}_{V}^{\pm }({\bf u},\pi ;\tilde{\bf f}),\gamma _{\pm }{\bf w}\big\rangle_{_{\partial \Omega }}=2\left\langle {{{\widetilde E}_\pm}\rm{Def}}\ {\bf u},{\rm{Def}}\ {\bf w}\right\rangle _{\Omega _{\pm }}\nonumber\\
&\hspace{2em}+\left\langle {\widetilde E}_\pm (V{\bf u}),{\bf w}\right\rangle _{\Omega _{\pm }}+\left\langle {{\widetilde E}_\pm}\pi, \delta {\bf w}\right\rangle _{\Omega _{\pm }}
-\big\langle \tilde{\bf f},{\bf w}\big\rangle _{\Omega _{\pm }},\nonumber\\
&\hspace{5em}\forall\ ({\bf u},\pi ,\tilde{\bf f})\in \pmb{\mathcal H}^{s+\frac{1}{2}}(\Omega _{\pm },{\mathcal L}_{V}),\ {\bf w}\in H^{\frac{3}{2}-s}(\Omega _\pm ,\Lambda ^1TM).
\end{align}
\end{lem}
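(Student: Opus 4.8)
The plan is to establish, in turn, the three assertions of the lemma --- linearity together with boundedness, independence of the chosen right inverse $\gamma_\pm^{-1}$, and the Green identity \eqref{conormal-derivative-generalized-2} --- treating the second as the crux, since the third will follow from it almost formally. Linearity of ${\bf t}_V^\pm$ in $({\bf u},\pi,\tilde{\bf f})$ is immediate from the right-hand side of \eqref{conormal-derivative-generalized-1}, as $\widetilde E_\pm$, ${\rm Def}$, multiplication by $V$, and every duality pairing are linear. For boundedness I would bound the four terms in \eqref{conormal-derivative-generalized-1} separately, observing that each is a pairing of genuinely dual Sobolev spaces on $\Omega_\pm$. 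Applying Lemma \ref{trace-operator1} with smoothness index $1-s\in(0,1)$, the map $\gamma_\pm^{-1}:H^{1-s}(\partial\Omega,\Lambda^1TM)\to H^{\frac32-s}(\Omega_\pm,\Lambda^1TM)$ is bounded, so ${\rm Def}(\gamma_\pm^{-1}{\bf\Phi})\in H^{\frac12-s}(\Omega_\pm,S^2T^*M)$ and $\delta(\gamma_\pm^{-1}{\bf\Phi})\in H^{\frac12-s}(\Omega_\pm)$; since $\widetilde E_\pm{\rm Def}\,{\bf u}$ and $\widetilde E_\pm\pi$ lie in $\widetilde H^{s-\frac12}(\Omega_\pm)$, $\widetilde E_\pm(V{\bf u})\in L^2(\Omega_\pm)$, and $\tilde{\bf f}\in\widetilde H^{s-\frac32}(\Omega_\pm)$, each term is controlled by the product of the corresponding norms through the duality \eqref{duality-spaces}. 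Summing and taking the supremum over ${\bf\Phi}$ of unit norm yields the asserted bound.

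The heart of the matter is independence of the right inverse. Let $\gamma_\pm^{-1}$ and $\widehat\gamma_\pm^{-1}$ be two continuous right inverses and set ${\bf w}_0:=(\gamma_\pm^{-1}-\widehat\gamma_\pm^{-1}){\bf\Phi}$, so that $\gamma_\pm{\bf w}_0=0$. I would show that the linear functional
\[
J({\bf w}):=2\big\langle\widetilde E_\pm{\rm Def}\,{\bf u},{\rm Def}\,{\bf w}\big\rangle_{\Omega_\pm}+\big\langle\widetilde E_\pm(V{\bf u}),{\bf w}\big\rangle_{\Omega_\pm}+\big\langle\widetilde E_\pm\pi,\delta{\bf w}\big\rangle_{\Omega_\pm}-\big\langle\tilde{\bf f},{\bf w}\big\rangle_{\Omega_\pm}
\]
vanishes at ${\bf w}={\bf w}_0$. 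Since $\gamma_\pm{\bf w}_0=0$ and $\frac32-s\in(\frac12,\frac32)$, the field ${\bf w}_0$ lies in the closure of ${\mathcal D}(\Omega_\pm,\Lambda^1TM)$ in $H^{\frac32-s}(\Omega_\pm,\Lambda^1TM)$; as $J$ is continuous on this space by the estimates above, it suffices to verify $J({\bf w})=0$ for compactly supported smooth ${\bf w}$. For such ${\bf w}$ the operators $\widetilde E_\pm$ act transparently, and the adjoint relations ${\bf L}=2{\rm Def}^*{\rm Def}$ from \eqref{Laplace-operator} and $\delta=d^*$ from \eqref{delta} yield, with no boundary contribution,
\[
2\langle{\rm Def}\,{\bf u},{\rm Def}\,{\bf w}\rangle_{\Omega_\pm}+\langle V{\bf u},{\bf w}\rangle_{\Omega_\pm}+\langle\pi,\delta{\bf w}\rangle_{\Omega_\pm}=\langle{\bf L}{\bf u}+V{\bf u}+d\pi,{\bf w}\rangle_{\Omega_\pm}=\langle{\mathcal L}_V({\bf u},\pi),{\bf w}\rangle_{\Omega_\pm}.
\]
On the other hand $\langle\tilde{\bf f},{\bf w}\rangle_{\Omega_\pm}=\langle\tilde{\bf f}|_{\Omega_\pm},{\bf w}\rangle_{\Omega_\pm}=\langle{\mathcal L}_V({\bf u},\pi),{\bf w}\rangle_{\Omega_\pm}$ by the defining property $\tilde{\bf f}|_{\Omega_\pm}={\mathcal L}_V({\bf u},\pi)$ of the space $\pmb{\mathcal H}^{s+\frac12}(\Omega_\pm,{\mathcal L}_V)$, so $J({\bf w})=0$. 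Hence the right-hand side of \eqref{conormal-derivative-generalized-1} is unchanged under $\gamma_\pm^{-1}\mapsto\widehat\gamma_\pm^{-1}$, proving ${\bf t}_V^\pm({\bf u},\pi;\tilde{\bf f})$ well defined.

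The Green identity then follows quickly. Fix ${\bf w}\in H^{\frac32-s}(\Omega_\pm,\Lambda^1TM)$ and put ${\bf\Phi}:=\gamma_\pm{\bf w}\in H^{1-s}(\partial\Omega,\Lambda^1TM)$. Since $J$ is linear and vanishes on $\ker\gamma_\pm$ by the previous step, it depends on ${\bf w}$ only through $\gamma_\pm{\bf w}$, so $J({\bf w})=J(\gamma_\pm^{-1}\gamma_\pm{\bf w})$; but the right-hand side equals $\pm\langle{\bf t}_V^\pm({\bf u},\pi;\tilde{\bf f}),\gamma_\pm{\bf w}\rangle_{\partial\Omega}$ by \eqref{conormal-derivative-generalized-1} applied with ${\bf\Phi}=\gamma_\pm{\bf w}$, which is precisely \eqref{conormal-derivative-generalized-2}. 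The steps I expect to be most delicate are the density identification $\ker\gamma_\pm=\overline{{\mathcal D}(\Omega_\pm,\Lambda^1TM)}$ in $H^{\frac32-s}$, valid because $\frac32-s\in(\frac12,\frac32)$, and the careful justification that $\widetilde E_\pm$ produces no boundary terms when paired against compactly supported fields; both rest on the Lipschitz-domain Sobolev theory and on commuting $\mathring E_\pm$ with the first-order operators ${\rm Def}$, $d$ and $\delta$ on fields supported in the open set $\Omega_\pm$.
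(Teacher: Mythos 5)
Your proposal is correct and follows essentially the same route as the proof the paper invokes: the paper omits the argument for brevity, referring to \cite[Lemma 2.2]{K-L-W1} and \cite[Lemma 2.2]{JDDE} (ultimately \cite[Theorem 3.2]{Mikh}), and those proofs run exactly as yours --- duality pairings between $\widetilde{H}^{q}(\Omega_{\pm})$ and $H^{-q}(\Omega_{\pm})$ for boundedness, vanishing of the functional $J$ on ${\rm{Ker}}\,\gamma_{\pm}$ via the density of ${\mathcal D}(\Omega_{\pm},\Lambda^1TM)$ in that kernel (legitimate since $\frac{3}{2}-s\in\left(\frac{1}{2},\frac{3}{2}\right)$ for $s\in(0,1)$, cf. \cite{Co}), and the Green identity \eqref{conormal-derivative-generalized-2} as the formal consequence of $J$ depending on ${\bf w}$ only through $\gamma_{\pm}{\bf w}$. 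The two points you flag as delicate are precisely the ones settled in those references, so there is no gap.
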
}
The proof of Lemma \ref{conormal-derivative-generalized-n1} is based on arguments similar to those for \cite[Lemma 2.2]{K-L-W1} and \cite[Lemma 2.2]{JDDE}, but we omit them for the sake of brevity.
\begin{rem}\label{R2.4}
In view of formula (\ref{conormal-derivative-generalized-1}), the conormal derivatives for the ope\-rators ${B}_V$ and ${B}_0$ are related by the formula
{\begin{align}
\label{connection-conormals}
{\bf t}_{V}^{\pm }\left({\bf u},\pi ;\tilde{\bf f}\right)={\bf t}_{0}^{\pm }\left({\bf u},\pi ;\tilde{\bf f}-{\widetilde E}_\pm\left(V{\bf u}\right)\right).
\end{align}}
\end{rem}
\noindent For further arguments, we need the following result\footnote{${\mathcal{L}}(X,Y):=\{T:X\to Y:\mbox{ T is linear and bounded}\}$, and ${\rm{Ker}}\{T:X\to Y\}:=\{x\in X: T(x)=0\}$.} (see, {e.g.} \cite[Proposition 10.6]{Agr-1}, \cite[Lemma 11.9.21]{M-W}).
\begin{lem}
\label{Fredholm-kernel}
Assume that $X_j$, $Y_j$, $j=1,2$, are Banach spaces such that the inclusions $X_1\hookrightarrow X_2$ and
$Y_1\hookrightarrow Y_2$ are continuous and the second of them has dense range. Let $T\in {\mathcal{L}}(X_1,Y_1)\cap {\mathcal{L}}(X_2,Y_2)$ be a Fredholm operator
such that ${\rm{index}}(T:X_1\to Y_1)={\rm{index}}(T:X_2\to Y_2).$ Then
$${\rm{Ker}}\{T:X_1\to Y_1\}={\rm{Ker}}\{T:X_2\to Y_2\}.$$
\end{lem}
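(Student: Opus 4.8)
The plan is to reduce the set equality of kernels to an equality of their (finite) dimensions, and to extract that dimension equality from the index hypothesis together with the density assumption. Since $T$ is Fredholm both as a map $X_1\to Y_1$ and as a map $X_2\to Y_2$, I write $N_j:=\dim \mathrm{Ker}\{T:X_j\to Y_j\}$ and $d_j:=\mathrm{codim}_{Y_j}\mathrm{Ran}\{T:X_j\to Y_j\}$ for $j=1,2$; these are all finite, the ranges $R_j:=\mathrm{Ran}\{T:X_j\to Y_j\}$ are closed, and $\mathrm{index}(T:X_j\to Y_j)=N_j-d_j$.

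First I would record the trivial inclusion. If $x\in X_1$ satisfies $Tx=0$ in $Y_1$, then $x\in X_2$ via $X_1\hookrightarrow X_2$, and $Tx=0$ in $Y_2$ via $Y_1\hookrightarrow Y_2$, because $T$ acts consistently on the two scales. Hence $\mathrm{Ker}\{T:X_1\to Y_1\}\subseteq \mathrm{Ker}\{T:X_2\to Y_2\}$ and, in particular, $N_1\le N_2$. Since the two index values coincide by hypothesis, $N_2-N_1=d_2-d_1$, so it suffices to prove $d_2\le d_1$: this forces $N_2\le N_1$, whence $N_1=N_2$, and a finite-dimensional subspace of a space of equal dimension must coincide with it, yielding the asserted equality of kernels.

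The main step is therefore the inequality $d_2\le d_1$, and this is where the density hypothesis enters. Choose a finite-dimensional complement $W\subset Y_1$ with $Y_1=R_1\oplus W$ and $\dim W=d_1$. Viewing $W\subset Y_1\subset Y_2$, consider the subspace $R_2+W\subseteq Y_2$. Since $R_2$ is closed in $Y_2$ (a consequence of $T$ being Fredholm on the pair $(X_2,Y_2)$) and $W$ is finite-dimensional, $R_2+W$ is closed. On the other hand, because $X_1\subseteq X_2$ we have $R_1\subseteq R_2$, so $R_2+W\supseteq R_1+W=Y_1$, and $Y_1$ is dense in $Y_2$ by assumption; a closed subspace containing a dense set is the whole space, so $R_2+W=Y_2$. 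Consequently the quotient map $Y_2\to Y_2/R_2$ restricts to a surjection of $W$ onto $Y_2/R_2$, giving $d_2=\dim(Y_2/R_2)\le \dim W=d_1$, as required.

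The only genuinely delicate point is the closedness-plus-density argument establishing $R_2+W=Y_2$; everything else is bookkeeping with finite dimensions and the index identity. I would take particular care to invoke that $R_2$ is closed so that its sum with the finite-dimensional space $W$ stays closed, and to use the density of the range of $Y_1\hookrightarrow Y_2$ at precisely the step where the inclusion $R_2+W\supseteq Y_1$ is upgraded to an equality $R_2+W=Y_2$.
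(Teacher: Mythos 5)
Your proof is correct. The paper itself does not prove Lemma \ref{Fredholm-kernel} but cites it from \cite[Proposition 10.6]{Agr-1} and \cite[Lemma 11.9.21]{M-W}, and your argument --- kernel inclusion via $X_1\hookrightarrow X_2$, the bound $d_2\le d_1$ obtained by pushing a finite-dimensional complement $W$ of the first range into $Y_2$, using closedness of $R_2+W$ together with density of $Y_1$ in $Y_2$, and then invoking the index identity to force equality of the (finite) kernel dimensions --- is precisely the standard proof given in those references.
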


\section{Newtonian and layer potentials for the Stokes system}

{{Let} ${\rm{OPS}}_{{\rm{cl}}}^{r}$ denote the set of all classical pseudodifferential operators of order $r$ on $M$ (see, e.g., \cite[Chapter 8]{Wloka} for the definition and properties of such operators on a compact manifold).}

Let $({\mathcal G}(\cdot ,\cdot ),\Pi (\cdot ,\cdot ))\in {\mathcal D}'(M,S^2TM) \times {\mathcal D}'(M,\Lambda ^1TM)$ be the fundamental solution of the Stokes operator ${\mathcal B}_0$ in $M$, {\cite{M-T}, (see also \cite{10-new2} for the extension to the Brinkman operator ${\mathcal B}_V$, with $V\in C^{\infty }(M)$). Then ${\mathcal G}(\cdot ,\cdot )$ and $\Pi (\cdot ,\cdot )$ satisfy the following equations on $M$
\begin{equation}
\label{Brinkman-operator4} {\bf L}_x{\mathcal G}(x,y)+d_x\Pi (x,y)={{\rm Dirac}}_y(x),\ \ \delta _x{\mathcal G}(x,y)=0,
\end{equation}
where ${\rm{Dirac}}_y$ is the Dirac distribution with mass at $y$, and the subscript $x$ added to a differential operator refers to the action of that operator with respect to the variable $x$.
In view of \cite[(3.22)]{D-M} there exists an operator $\Upsilon \in OPS_{{\rm{cl}}}^{0}(M,{\mathbb R})$ such that the Schwartz kernel $\Xi $ of $\Upsilon ^{\top }$ satisfies the equation
\begin{equation}
\label{pseudo-operator-zero-1}
{\bf L}_x\Pi ^{\top}(y,x)=d_x\Xi (x,y),
\end{equation}
and that (see also \cite[(3.27)]{D-M})
\begin{align}
\label{dl-sm}
\delta_x \Pi ^{\top }(y,x)=-{\rm{Dirac}}_y(x).
\end{align}

\subsection{Newtonian potentials for the Stokes operator}

Let {$s\in (0,1)$.} Let ${\boldsymbol\varphi}\in {\mathcal D}(M,\Lambda ^1TM)$. Then the volume velocity and pressure potentials of the Stokes system, ${\mathcal N}_{{M}}{\boldsymbol\varphi}$ and ${\mathcal Q}_{{M}}{\boldsymbol\varphi}$, are, respectively, defined at any ${\bf x}\in M$ by
\begin{align}
\label{NoT}
\!\!\!\!\!&\big({\mathcal N}_{{M}}{\boldsymbol\varphi}\big)({\bf x}):=
\big\langle {\mathcal G}({\bf x}, \cdot),{\boldsymbol\varphi}\big\rangle_{_{M}}
=\int _{M}{{\mathcal G}({\bf x},{\bf y}){\boldsymbol\varphi}({\bf y})} \, d{\rm{Vol}}_{\bf y},\\
\label{QoT}
\!\!\!\!\!&\big({\mathcal Q}_{{M}}{\boldsymbol\varphi}\big)({\bf x}) :=
\big\langle{\Pi }({\bf x},\cdot),{\boldsymbol\varphi}\big\rangle _{M}=\int _{M} \left\langle {\Pi}({\bf x},{\bf y}),{\boldsymbol\varphi}({\bf y})\right\rangle d{\rm{Vol}}_{\bf y},
\end{align}
and the operators
${\mathcal N}_{M}:C^{\infty }(M,\Lambda ^1TM)\to C^{\infty }(M,\Lambda ^1TM)$ and\\
${\mathcal Q}_{{M}}:C^{\infty }(M,\Lambda ^1TM)\to C^{\infty }(M)$ are continuous. In addition, definitions \eqref{NoT} and \eqref{QoT} can be extended to Sobolev {spaces,} and the operators
\begin{align}
\label{Newtonian-Stokes-R}
&{\mathcal N}_{M}:{H}^{s-\frac{3}{2}}(M,\Lambda ^1TM)\to H^{s+\frac{1}{2}}(M,\Lambda ^1TM),\\
\label{Newtonian-Stokes-R1}
&{\mathcal Q}_{M}:{H}^{s-\frac{3}{2}}(M,\Lambda ^1TM)\to H^{s-\frac{1}{2}}(M)
\end{align}
are linear and continuous, due to the fact that the volume velocity and pressure potentials ${\mathcal N}_{M}$ and ${\mathcal Q}_{M}$ are pseudodifferential operators of order $-2$ and $-1$, respectively { (cf., e.g., \cite[Theorem 3.2]{10-new2}; see also \cite[(5.12), (5.13)]{D-M}), and \cite[Lemma 3.2]{K-L-M-W} for the similar mapping properties in the Euclidean setting)}.

Let $r_{{\Omega }_{\pm }}$ be the operators {restricting} (scalar-valued or vector-valued) distributions in $M$ to $\Omega_{\pm}$. Then we define the Newtonian velocity and pressure potentials ${\mathcal N}_{{\Omega }_{\pm }}\tilde{\bf F}_{\pm}$ with densities ${\widetilde{\bf F}}_{\pm }\in \widetilde{H}^{s-\frac{3}{2}}({\Omega }_{\pm },\Lambda ^1TM)$ as the restrictions
\begin{align}
\label{proof-2-B}
{\mathcal N}_{{\Omega }_{\pm }}\widetilde{\bf F}_{\pm}:=r_{{\Omega _{\pm }}}\big({\mathcal N}_{M}\widetilde{\bf F}_{\pm }\big),\ \
{\mathcal Q}_{{\Omega }_{\pm }}\widetilde{\bf F}_{\pm}:=r_{{\Omega _{\pm }}}\big({\mathcal Q}_{M}\widetilde{\bf F}_{\pm }\big).
\end{align}
By the continuity of the volume velocity potential operator given in \eqref{Newtonian-Stokes-R},
as well as the continuity of the restriction operators $r_{\Omega _{\pm}}:{H}^{s+\frac{1}{2}}(M,\Lambda ^1TM)\to {H}^{s+\frac{1}{2}}(\Omega _{\pm},\Lambda ^1TM)$ and since $\widetilde{H}^{s-\frac{3}{2}}({\Omega }_{\pm },\Lambda ^1TM)$ is a subspace of the space ${H}^{s-\frac{3}{2}}(M,\Lambda ^1TM)$, we deduce that
the Newtonian velocity potentials
\begin{align}
\label{Newtonian-velocity}
{\mathcal N}_{{\Omega }_{\pm }}:\widetilde{H}^{s-\frac{3}{2}}({\Omega }_{\pm },\Lambda ^1TM)\to
{H}^{s+\frac{1}{2}}({\Omega }_{\pm },\Lambda ^1TM)
\end{align}
are continuous as well.
A similar argument as above implies the continuity of the Newtonian pressure potential operator
\begin{align}
\label{Newtonian-pressure}
{\mathcal Q}_{{\Omega }_{\pm }}:\widetilde{H}^{s-\frac{3}{2}}({\Omega }_{\pm },\Lambda ^1TM)\to
{H}^{s-\frac{1}{2}}({\Omega }_{\pm }).
\end{align}
Let ${\widetilde{\bf F}}_{\pm}\in \widetilde{H}^{s-\frac{3}{2}}(\Omega _{\pm },\Lambda ^1TM)$.
Due to (\ref{Brinkman-operator4}){-(\ref{dl-sm})}, the {pair} $\big({\mathcal N}_{\Omega _{\pm }}{\widetilde{\bf F}}_{\pm},{\mathcal Q}_{\Omega _{\pm }}{\widetilde{\bf F}}_{\pm }\big)$ {satisfies} the relations
\begin{align}
\label{4.21}
&{\mathcal N}_{{\Omega }_{\pm }}{{\widetilde{\bf F}}}_{\pm }\in {H^{s+\frac{1}{2}}}({\Omega }_{\pm },\Lambda ^1TM),\ {\mathcal Q}_{{\Omega }_{\pm }}{{\widetilde{\bf F}}}_{\pm }\in H^{s-\frac{1}{2}}({\Omega }_{\pm }),
\\
\label{Newtonian-s1}
&{\bf L}{\mathcal N}_{{\Omega }_{\pm }}{{\widetilde{\bf F}}}_{\pm }+d{\mathcal Q}_{{\Omega }_{\pm }}{{\widetilde{\bf F}}}_{\pm }={{\widetilde{\bf F}}}_{\pm },\ \delta \left({\mathcal N}_{{\Omega }_{\pm }}{{\widetilde{\bf F}}}_{\pm }\right)=0\ \mbox{ in }\ {\Omega }_{\pm }.
\end{align}

\subsection{Layer potentials for the Stokes system}

Let $s\in [0,1]$ and ${\boldsymbol\psi}\in H^{s-1}(\partial \Omega ,\Lambda ^1TM)$. Then the single-layer velocity and pressure potentials ${\bf V}_{\partial \Omega }{\boldsymbol\psi}$ and ${\mathcal Q_{\partial \Omega}\boldsymbol\psi}$ for the Stokes system are given by
\begin{equation}
\label{single-layer-Brinkman1} ({\bf V}_{\partial \Omega }{\boldsymbol\psi})(x):=-\langle {\mathcal
G}(x,\cdot ),{\boldsymbol\psi}\rangle _{\partial \Omega },\ ({\mathcal Q_{\partial \Omega}\boldsymbol\psi})(x):=-\langle \Pi (x,\cdot ),{\boldsymbol\psi}\rangle _{\partial \Omega }, \ x\in M\setminus {\partial \Omega }
\end{equation}
(see \cite[(2.19),(2.20)]{D-M}). Now let ${\boldsymbol \phi}\in H^s_{\nu }(\partial \Omega ,\Lambda ^1TM)$ be given, where
\begin{equation}
\label{Sobolev-normal} H^s_{\nu }(\partial \Omega ,\Lambda ^1TM):=
\left\{{\boldsymbol \phi}^0\in H^s(\partial \Omega ,\Lambda ^1TM):\left\langle \nu ,{\boldsymbol \phi}^0\right\rangle _{\partial \Omega }=0\right\}.
\end{equation}
Then the double-layer velocity and pressure potentials ${\bf W}_{\partial \Omega }{\boldsymbol \phi}$ and ${\mathcal P}_{\partial \Omega }{\boldsymbol \phi}$ for the Stokes system are defined at any $x\in M\setminus {\partial \Omega }$ by
\begin{align}
\label{double-layer-Brinkman1} &({\bf W}_{\partial \Omega }{\boldsymbol \phi})(x)
:=\int _{\partial \Omega }\left\langle 2({\rm{Def }}\ {\mathcal G}(x,y))\nu (y)-\Pi ^{\top}(y,x){\otimes }\nu (y),{\boldsymbol \phi}(y)\right\rangle d\sigma _y,\\
\label{double-layer-Brinkman7} &({\mathcal P}_{\partial \Omega }{\boldsymbol \phi})(x)
:=\int _{\partial \Omega }\left\langle 2({\rm{Def }}\ \Pi (x,y))\nu (y)+\Xi (x,y)\nu (y)
,{\boldsymbol \phi}(y)\right\rangle d\sigma _y,
\end{align}
where $\Xi $ is the Schwartz kernel of the pseudodifferential operator $\Upsilon $, and $\Pi ^{\top}$ is the transpose of $\Pi $ (see \cite[(3.1),(3.25)]{D-M}). {Note that {in \eqref{double-layer-Brinkman1} and further on,} $({\rm{Def }}\, {\mathcal G}(x,\cdot ))\nu $ yields the action of the third order tensor field ${\rm{Def }}\, {\mathcal G}(x,\cdot )$ {on} $\nu $, while $\Pi ^{\top}(\cdot ,x)\nu $ is a dyadic product between the vector fields $\Pi ^{\top}(\cdot ,x)$ and $\nu $. Both terms, $({\rm{Def }}\, {\mathcal G}(x,\cdot ))\nu $ and $\Pi ^{\top}(\cdot ,x)\nu $, are second order tensor fields. Moreover, {in \eqref{double-layer-Brinkman7} and further on,} $({\rm{Def }}\, \Pi (x,\cdot ))\nu $ yields the action of the second order tensor field ${\rm{Def }}\, \Pi (x,\cdot )$ on $\nu $, while $\Xi (x,\cdot)\nu $ is a multiplication of the unit conormal $\nu $ with the {scalar} $\Xi (x,\cdot)$. Both $({\rm{Def }}\, \Pi (x,\cdot ))\nu $ and $\Xi (x,\cdot)\nu $ are vector fields.}

In addition, the principal value of ${\bf W}_{\partial \Omega }{\boldsymbol \phi}$ is denoted by ${\bf K}_{\partial \Omega }{\boldsymbol \phi}$ and is defined at a.e. $x\in \partial \Omega $ by
\begin{align}
\label{double-layer-Brinkman3}
&({\bf K}_{\partial \Omega }{\boldsymbol \phi})(x):={\rm{p.v.}}\int_{\partial \Omega}\big\langle {2}\left({\rm{Def }}_{y}\ {\mathcal G}(x,{y})\right)\nu {(y)}\!-\!{\Pi ^{\top}(y,x)\otimes \nu (y)},{\boldsymbol \phi}(y)\big\rangle d\sigma _y\nonumber\\
&=\lim_{\varepsilon \to 0}\!\int _{\{y\in \partial \Omega :\ {\rm{dist}}(x,y)>\varepsilon \}}\big\langle {2}\left({\rm{Def }}_{y}\ {\mathcal G}(x,{y})\right)\nu {(y)}-{\Pi ^{\top}(y,x)\otimes \nu (y)},{\boldsymbol \phi}(y)\big\rangle d\sigma _y,
\end{align}
where ${\rm{p.v.}}$ means the principal value, and ${\rm{dist}}(x,y)$ is the geodesic distance.

By (\ref{Brinkman-operator4}), (\ref{pseudo-operator-zero-1}), (\ref{dl-sm}), the pairs
$\big({\bf V}_{\partial\Omega }{\boldsymbol\psi},{{\mathcal Q}_{\partial\Omega}\boldsymbol\psi}\big)$ and $\big({\bf W}_{\partial \Omega }{\boldsymbol \phi},{\mathcal P}_{\partial \Omega }{\boldsymbol \phi}\big)$ satisfy the equations of the Stokes system

\begin{align}
\label{double-layer-Brinkman10}
&{\bf L}{\bf V}_{\partial \Omega }{\boldsymbol\psi}+d{\mathcal Q_{\partial \Omega}\boldsymbol\psi}=0,\ \delta {\bf V}_{\partial \Omega }{\boldsymbol\psi}=0 \mbox{ in } M\setminus \partial \Omega ,\\
&{\bf L}{\bf W}_{\partial \Omega }{\boldsymbol \phi}+d{\mathcal P}_{\partial \Omega }{\boldsymbol \phi}=0,\ \delta {\bf W}_{\partial \Omega }{\boldsymbol \phi}=0 \mbox{ in } M\setminus \partial \Omega .
\end{align}
For the main properties of the layer potentials, we refer the reader to
\cite[Proposition 4.2.5, 4.2.9, Corollary 4.3.2, Theorems 5.3.6, 5.4.1, 5.4.3, 10.5.3]{M-W}
for the Stokes system in the Euclidean setting,
\cite[Theorem 2.1, (3.5), Proposition 3.5]{D-M}, \cite[Theorems 3.1, 6.1]{M-T}
for the Stokes system in compact Riemannian manifolds, and
\cite[Theorems 4.3, 4.9, 4.11, (131), (132), (137), Lemma 5.4]{10-new2} for pseudodifferential Brinkman operators in Riemannian manifolds. Theorem \ref{single-layer-operator-Brinkman} shows some of these properties.

\section{Transmission problems for the Stokes and generalized Brinkman systems}
Let ${\mu }>0$ {be a constant}, {$V\in {L^{\infty }(M,\Lambda^1TM\otimes \Lambda ^1TM)}$} be a {symmetric} tensor field satisfying condition \eqref{V-positive} and ${\mathcal P}\in L^{\infty }(\partial \Omega ,\Lambda ^1TM\otimes \Lambda ^1TM)$ be a {symmetric tensor field} satisfying condition \eqref{Poisson-transmission3}. Recall that $\widetilde E_{\pm }$ {are the extension operators defined by} \eqref{ext-1}, \eqref{ext-2}. For $s\in (0,1)$, define the spaces
\begin{align}
\label{spaces-Poisson-transmission-MS}
&H_*^{s-\frac{1}{2}}(\Omega _{+}):=\big\{q\in H^{s-\frac{1}{2}}(\Omega _{+}): \big\langle \widetilde E_+ q,1\big\rangle _{\Omega _{+}}=0\big\},\\
&H^{s}_{\nu }(\partial \Omega ,\Lambda ^1TM):=\big\{{\bf \Phi }\in
H^{s}(\partial \Omega ,\Lambda ^1TM):\langle {\bf \Phi },\nu \rangle _{\partial \Omega }=0\big\},
\\
\label{X1-sm}
&{{\mathcal X}_{s}}:=
\big({H^{s+\frac{1}{2}}(\Omega _{+},\Lambda ^1TM)}\times H_*^{s-\frac{1}{2}}(\Omega _{+})\big)\nonumber\\
&\hspace{7em}\times \big({H^{s+\frac{1}{2}}(\Omega _{-},\Lambda ^1TM)}\times H^{s-\frac{1}{2}}(\Omega _{-})\big),\\
\label{Y1-sm}
&{{\mathcal Y}_{s}}:=\big(\widetilde{H}^{s-\frac{3}{2}}(\Omega _{+},\Lambda ^1TM)\times \widetilde{H}^{s-\frac{3}{2}}(\Omega _{-},\Lambda ^1TM)\big)\nonumber
\\
&\hspace{7em}\times \big(H^{s}_{\nu }(\partial \Omega ,\Lambda ^1TM)\times H^{s-1}(\partial \Omega ,\Lambda ^1TM)\big).
\end{align}

{We will use} the above {spaces,} ${\mathcal X}_{s}$ and ${\mathcal Y}_{s}$, {to} consider the following Poisson problem of transmission type for the {incompressible} Stokes and generalized Brinkman systems in the complementary Lipschitz domains $\Omega _{\pm }$
\begin{equation}
\label{Poisson-transmission4}
\left\{\begin{array}{ll}
({\bf L}+V){\bf u}_{+}+d\pi _{+}=\tilde{\bf f}_{+}|_{\Omega _{+}},\
\delta {\bf u}_{+}=0\ \mbox{ in }\ \Omega _{+},\\
{\bf L}{\bf u}_{-}+d\pi _{-}=\tilde{\bf f}_{-}|_{\Omega _{-}},\ \delta {\bf u}_{-}=0
\ \mbox{ in }\ \Omega _{-},\\
{\mu }\gamma _{+}{\bf u}_{+}-\gamma _{-}{\bf u}_{-}={\bf h}\ \mbox{ on }\ \partial \Omega ,\\
{\bf t}_{V}^{+}\left({\bf u}_{+},\pi _{+};\tilde{\bf f}_{+}\right)
-{{\bf t}_0^-}\left({\bf u}_{-},\pi _{-};\tilde{\bf f}_{-}\right)
+{\mathcal P} \gamma _{+}{\bf u}_{+}={\bf r}\ \mbox{ on }\ \partial \Omega .
\end{array}
\right.
\end{equation}
Let us also consider {another problem,}
\begin{equation}
\label{Poisson-transmission4-new}
\left\{\begin{array}{ll}
({\bf L}+V){\bf u}_{+}+d \pi _{+}=\tilde{\bf f}_{+}|_{\Omega _{+}},\ \delta {\bf u}_{+}=0
\ \mbox{ in }\ \Omega _{+},\\
{\bf L}{\bf u}_{-}+d\pi _{-}=\tilde{\bf f}_{-}|_{\Omega _{-}},\
\delta {\bf u}_{-}=0
\ \mbox{ in }\ \Omega _{-},\\
\gamma _{+}{\bf u}_{+}-\gamma _{-}{\bf u}_{-}={\bf h}\ \mbox{ on }\ \partial \Omega ,\\
{\bf t}_{V}^{+}\left({\bf u}_{+},\pi _{+};\tilde{\bf f}_{+}\right)
-{\mu }{{\bf t}_{0}^-}\left({\bf u}_{-},\pi _{-};\tilde{\bf f}_{-}\right)
+{\mathcal P} \gamma _{+}{\bf u}_{+}={\bf r}\ \mbox{ on }\ \partial \Omega ,
\end{array}
\right.
\end{equation}
where the transmission conditions on $\partial\Omega$ generalize the interface conditions obtained in, e.g., \cite{Ochoa_Tapia-Whitacker1995-1}, \cite{Ochoa_Tapia-Whitacker1995-2}, corresponding to two viscous fluids in $\Omega_+$ and $\Omega_-$, and the constant $\mu >0$ can be interpreted as the ratio of viscosity coefficients of such fluids. In addition, we note that for $\mu >0$ fixed, the following property is immediate:
\begin{align}
\label{change-var}
&\left(({\bf u}_{+},\pi _{+}),({\bf u}_{-},\pi
_{-})\right) \mbox{ solves } (\ref{Poisson-transmission4}) \mbox{ for } \left(\tilde{\bf f}_{+},\tilde{\bf f}_{-},{\bf h},{\bf r}\right)\Longleftrightarrow \nonumber
\\
&\left((\mu {\bf u}_{+},\mu \pi _{+}),({\bf u}_{-},\pi
_{-})\right) \mbox{ solves } (\ref{Poisson-transmission4-new}) \mbox{ for } \left(\mu \tilde{\bf f}_{+},\tilde{\bf f}_{-},{\bf h},\mu {\bf r}\right),
\end{align}
which shows the connection between problems (\ref{Poisson-transmission4}) and (\ref{Poisson-transmission4-new}). Hence, the well-posedness of the transmission problem (\ref{Poisson-transmission4-new}) is equivalent with the well-posedness of the transmission problem (\ref{Poisson-transmission4}). In view of this argument, we next analyze the transmission problem (\ref{Poisson-transmission4}) and show its well-posedness in the space ${\mathcal X}_{s}$ whenever the given data belong to the space ${\mathcal Y}_{s}$, ${s\in (0,1)}$, and $\mu \in (0,\infty )$. We extend the well-posedness results in \cite[Theorems 4.1 and 5.1]{JDDE} and \cite[Theorem 4.1]{K-G-P-W} to a more general case concerning the spaces of given data and the range of the involved parameter.
The proof of such a well-posedness result is based on a layer potential analysis for the Stokes system (see also the proof of \cite[Theorems 4.1]{JDDE}) combined with the invertibility of some related Fredholm operator of index zero.

\subsection{Uniqueness result the transmission problem \eqref{Poisson-transmission4} in the case $V=0$}

{Let us} consider the auxiliary transmission problem for the Stokes {system,}
\begin{equation}
\label{Poisson-transmission4-s}
\left\{\begin{array}{ll}
{\bf L}{\bf u}_{+}+d\pi _{+}=\tilde{\bf f}_{+}|_{\Omega _{+}},\ \delta {\bf u}_{+}=0
\ \mbox{ in }\ \Omega _{+},\\
{\bf L}{\bf u}_{-}+d\pi _{-}=\tilde{\bf f}_{-}|_{\Omega _{-}},\ \delta {\bf u}_{-}=0
\ \mbox{ in }\ \Omega _{-},\\
{\mu }\gamma _{+}{\bf u}_{+}-\gamma _{-}{\bf u}_{-}={\bf h}\ \mbox{ on }\ \partial \Omega ,\\
{{\bf t}_0^{+}}\left({\bf u}_{+},\pi _{+};\tilde{\bf f}_{+}\right)
-{{\bf t}_0^{-}}\left({\bf u}_{-},\pi _{-};\tilde{\bf f}_{-}\right)
+{\mathcal P}\gamma _{+}{\bf u}_{+}={\bf r}\ \mbox{ on }\ \partial \Omega ,
\end{array}
\right.
\end{equation}
i.e., problem (\ref{Poisson-transmission4}) {with $V=0$}, and show the following uniqueness result.
\begin{lem}
\label{Poisson-transmission-Stokes-P}
{Let $M$ satisfy Assumption $\ref{H}$ and $\dim (M)\geq 2$}. Let $\Omega _{+}:=\Omega \subset M$ be a Lipschitz domain. {Let $\Omega _{-}:=M\setminus \overline{\Omega }$ satisfy Assumption $\ref{connected-set}$.} Let $s\in (0,1)$ and ${\mu >0}$ be a given constant. Let ${\mathcal P}\in L^{\infty }(\partial \Omega ,\Lambda ^1TM\otimes \Lambda ^1TM)$ be a {symmetric tensor field} which satisfies the positivity condition \eqref{Poisson-transmission3}. Then for $\left(\tilde{\bf f}_{+},\tilde{\bf f}_{-},{\bf h},{\bf r}\right)\in {\mathcal Y}_{s}$, {the transmission problem} \eqref{Poisson-transmission4-s} has at most one solution $\left(({\bf u}_{+},\pi _{+}),({\bf u}_{-},\pi _{-})\right)\in {\mathcal X}_{s}$.
\end{lem}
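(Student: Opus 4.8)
The plan is to prove uniqueness by showing that the only solution of the homogeneous problem, i.e. \eqref{Poisson-transmission4-s} with $\tilde{\bf f}_+=\tilde{\bf f}_-=0$, ${\bf h}=0$ and ${\bf r}=0$, is the trivial one; uniqueness for arbitrary data then follows by linearity. The engine of the argument is the Green identity \eqref{conormal-derivative-generalized-2} tested against the solution itself, followed by a Killing-field argument that exploits Assumption \ref{H}.

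First I would apply \eqref{conormal-derivative-generalized-2} on $\Omega_+$ with $V=0$, $\tilde{\bf f}_+=0$ and ${\bf w}={\bf u}_+$, and on $\Omega_-$ with $\tilde{\bf f}_-=0$ and ${\bf w}={\bf u}_-$. Since $\delta{\bf u}_\pm=0$, the pressure terms drop out, and the two identities read $\langle{\bf t}_0^+({\bf u}_+,\pi_+;0),\gamma_+{\bf u}_+\rangle_{\partial\Omega}=2\|{\rm Def}\,{\bf u}_+\|_{L^2(\Omega_+)}^2$ and $-\langle{\bf t}_0^-({\bf u}_-,\pi_-;0),\gamma_-{\bf u}_-\rangle_{\partial\Omega}=2\|{\rm Def}\,{\bf u}_-\|_{L^2(\Omega_-)}^2$. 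Inserting the homogeneous transmission conditions ${\bf t}_0^-={\bf t}_0^+ +{\mathcal P}\gamma_+{\bf u}_+$ and $\gamma_-{\bf u}_-=\mu\gamma_+{\bf u}_+$ into the second identity and eliminating $\langle{\bf t}_0^+,\gamma_+{\bf u}_+\rangle$ with the first yields the energy balance
\[ 2\mu\|{\rm Def}\,{\bf u}_+\|_{L^2(\Omega_+)}^2 + 2\|{\rm Def}\,{\bf u}_-\|_{L^2(\Omega_-)}^2 + \mu\langle{\mathcal P}\gamma_+{\bf u}_+,\gamma_+{\bf u}_+\rangle_{\partial\Omega}=0. \]
Because $\mu>0$ and ${\mathcal P}$ satisfies the positivity condition \eqref{Poisson-transmission3}, every term is nonnegative, so each vanishes; in particular ${\rm Def}\,{\bf u}_+=0$ in $\Omega_+$ and ${\rm Def}\,{\bf u}_-=0$ in $\Omega_-$.

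Next I would glue the two deformation-free fields into a global Killing field. By \eqref{Killing-field}, ${\bf u}_\pm$ are Killing on $\Omega_\pm$. Define ${\bf u}:=\mu{\bf u}_+$ on $\Omega_+$ and ${\bf u}:={\bf u}_-$ on $\Omega_-$; the first homogeneous transmission condition gives $\gamma_+(\mu{\bf u}_+)=\mu\gamma_+{\bf u}_+=\gamma_-{\bf u}_-$, so the traces match across $\partial\Omega$ and hence ${\bf u}\in H^1(M,\Lambda^1TM)$ with ${\rm Def}\,{\bf u}=0$ a.e. on $M$. Assumption \ref{H} (no nontrivial Killing field) then forces ${\bf u}\equiv 0$, whence ${\bf u}_+=0$ and ${\bf u}_-=0$. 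Finally, ${\rm Def}\,{\bf u}_\pm=0$ gives ${\bf L}{\bf u}_\pm=2{\rm Def}^*{\rm Def}\,{\bf u}_\pm=0$, so the homogeneous equations reduce to $d\pi_\pm=0$; as $\Omega_+$ and $\Omega_-$ are connected, $\pi_\pm$ are constants. The mean-value constraint defining $H_*^{s-\frac12}(\Omega_+)$ in \eqref{spaces-Poisson-transmission-MS} forces $\pi_+=0$, and then the second transmission condition, evaluated with ${\bf u}_\pm=0$ and $\pi_+=0$, reduces to $(\pi_- -\pi_+)\nu=0$ on $\partial\Omega$, giving $\pi_-=0$. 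Thus the homogeneous problem has only the trivial solution.

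I expect the main obstacle to be the range $s\in(0,\tfrac12)$: the pairing $\langle{\bf t}_0^\pm,\gamma_\pm{\bf u}_\pm\rangle_{\partial\Omega}$ in the energy step is a duality between $H^{s-1}(\partial\Omega,\Lambda^1TM)$ and $H^{1-s}(\partial\Omega,\Lambda^1TM)$, while the trace lemma only guarantees $\gamma_\pm{\bf u}_\pm\in H^s(\partial\Omega,\Lambda^1TM)$, and the inclusion $H^s\hookrightarrow H^{1-s}$ holds only for $s\ge\tfrac12$. Hence the computation above is rigorous as stated for $s\in[\tfrac12,1)$. To reach $s\in(0,\tfrac12)$ I would invoke Lemma \ref{Fredholm-kernel}: the operator associated with \eqref{Poisson-transmission4-s} is Fredholm with index independent of $s$ (from the layer-potential analysis underlying the accompanying well-posedness result), so its null space in ${\mathcal X}_s$ is independent of $s$; combined with its triviality for $s\in[\tfrac12,1)$, this yields uniqueness for every $s\in(0,1)$.
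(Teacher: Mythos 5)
Your energy argument is sound on its stated range $s\in[\tfrac12,1)$: the Green identity \eqref{conormal-derivative-generalized-2} with $V=0$, tested with the solution itself, is precisely the computation the paper performs in \eqref{eq1-s}--\eqref{uniqueness-2} (there for the $V\neq 0$ problem in Theorem \ref{Poisson-transmission-Stokes}), and your accounting of when the pairings make sense is accurate. Two remarks on the conclusion step: since each term in your energy balance vanishes, the strong positivity \eqref{Poisson-transmission3} already forces $\gamma_+{\bf u}_+={\bf 0}$, hence $\gamma_-{\bf u}_-={\bf 0}$, so the Killing-field gluing is unnecessary --- ${\bf u}_\pm\in H^1_0(\Omega_\pm,\Lambda^1TM)$ with ${\rm Def}\,{\bf u}_\pm=0$ and the injectivity of the operator \eqref{def-one-to-one} finish the job; and if you do glue, you should justify that an $H^1(M,\Lambda^1TM)$ field with ${\rm Def}\,{\bf u}=0$ a.e.\ is excluded by Assumption \ref{H}, which follows from the invertibility of ${\bf L}:H^1(M,\Lambda^1TM)\to H^{-1}(M,\Lambda^1TM)$ recorded after Assumption \ref{H}, because ${\bf L}{\bf u}=2{\rm Def}^*{\rm Def}\,{\bf u}=0$. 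Note also that the paper's own proof of this lemma is different in kind: it represents the homogeneous solution by its Cauchy data via \eqref{sm-r1}--\eqref{sm-r2}, reduces the transmission conditions through the jump relations to the single boundary equation \eqref{q2}, and invokes the invertibility of $\frac12(1+\mu){\mathbb I}+(1-\mu){\bf K}_{\partial\Omega}+{\mathcal V}_{\partial\Omega}{\mathcal P}$ on $H^s_\nu(\partial\Omega,\Lambda^1TM)$ for every $s\in(0,1)$ at once (Theorem \ref{single-layer-operator-Brinkman}(iii)).

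The genuine gap is your treatment of $s\in(0,\tfrac12)$. You apply Lemma \ref{Fredholm-kernel} to ``the operator associated with \eqref{Poisson-transmission4-s}'' without exhibiting that operator, and the obvious candidate does not exist: the assignment $\left(({\bf u}_+,\pi_+),({\bf u}_-,\pi_-)\right)\mapsto\left(\tilde{\bf f}_+,\tilde{\bf f}_-,{\bf h},{\bf r}\right)$ is not a well-defined bounded map ${\mathcal X}_s\to{\mathcal Y}_s$, because the data $\tilde{\bf f}_\pm$ are prescribed as extended distributions in $\widetilde H^{s-\frac32}(\Omega_\pm,\Lambda^1TM)$, the conormal derivative ${\bf t}_0^\pm(\cdot,\cdot;\tilde{\bf f}_\pm)$ depends on the chosen extension, and no bounded canonical extension $H^{s-\frac32}(\Omega_\pm)\to\widetilde H^{s-\frac32}(\Omega_\pm)$ is available, since $s-\frac32\notin(-\frac12,\frac12)$, the range in which $\widetilde E_\pm$ of \eqref{ext-1}--\eqref{ext-2} is defined. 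Moreover, citing Fredholmness ``from the layer-potential analysis underlying the accompanying well-posedness result'' risks circularity: at the PDE level the paper constructs such a Fredholm operator (${\mathcal A}_V$ in Theorem \ref{Poisson-transmission-Stokes}) only after Theorem \ref{Poisson-transmission-S0}, whose proof rests on this very uniqueness lemma; what is available without circularity is the boundary operator ${\mathcal U}$ of \eqref{3.7operator-new1}, but transferring kernel information from ${\mathcal U}$ to arbitrary homogeneous solutions --- which are not a priori layer potentials --- again requires the representation formulas \eqref{sm-r1}--\eqref{sm-r2}. To close the gap you should either carry out the paper's reduction to \eqref{q2} and invoke Theorem \ref{single-layer-operator-Brinkman}(iii), where the passage across $s$ is performed for a concrete boundary operator to which Lemma \ref{Fredholm-kernel} cleanly applies, or prove a regularity statement showing that homogeneous solutions in ${\mathcal X}_s$ with $s<\tfrac12$ in fact belong to the $s=\tfrac12$ solution space; as written, the final step appeals to machinery that has not yet been set up at this point of the development.
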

\begin{proof}
{Let} $\left(({\bf u}_{+}^0,\pi _{+}^0),({\bf u}_{-}^0,\pi _{-}^0)\right)\in {\mathcal X}_{s}$ { satisfy} the homogeneous version of problem (\ref{Poisson-transmission4-s}). {Then} the vector fields ${\bf u}_{\pm }^0$  admit the layer potential representations {of the Stokes system solutions},
\begin{align}
\label{sm-r1}
&{\bf u}_{+}^0={\bf V}_{\partial \Omega }\left({{\bf t}_0^+}({\bf u}_{+}^0,\pi _+^0)\right)-{\bf W}_{\partial \Omega }\left(\gamma _{+}{\bf u}_{+}^0\right)\ \mbox{ in }\ \Omega _+,\\
\label{sm-r2}
&{\bf u}_{-}^0=-{\bf V}_{\partial \Omega }\left({{\bf t}_0^-}({\bf u}_{-}^0,\pi _{-}^0)\right)+{\bf W}_{\partial \Omega }\left(\gamma _{-}{\bf u}_{-}^0\right)\ \mbox{ in }\ \Omega _{-}
\end{align}
(see, e.g., \cite[(3.7)]{D-M}). By applying the trace {operators $\gamma _{+}$ to (\ref{sm-r1}) and  $\gamma _{-}$ to} (\ref{sm-r2}), and by using formulas {(\ref{single-layer-Brinkman8a}) and} (\ref{double-layer-Brinkman12}), we obtain the equations
\begin{align}
\label{sm-r3}
&\left(\frac{1}{2}{\mathbb I}+{\bf K}_{\partial \Omega }\right)\gamma _{+}{\bf u}_{+}^0={\mathcal V}_{\partial \Omega }\left({{\bf t}_0^+}({\bf u}_{+}^0,\pi _+^0)\right)\ \mbox{ on }\ \partial \Omega ,
\end{align}
\begin{align}
\label{sm-r4}
&\left(-\frac{1}{2}{\mathbb I}+{\bf K}_{\partial \Omega }\right)\gamma _{-}{\bf u}_{-}^0=
{\mathcal V}_{\partial \Omega }\left({{\bf t}_0^-}({\bf u}_{-}^0, \pi _{-}^0)\right)\ \mbox{ on }\ \partial \Omega.
\end{align}
Equations (\ref{sm-r3}), (\ref{sm-r4}) and the transmission conditions
\begin{equation}
\label{q2-0}
\!\!\!\!{\mu }\gamma _{+}{\bf u}_+^0\!-\!\gamma _{-}{\bf u}_{-}^0={\bf 0},\
{{\bf t}_0^+}({\bf u}_{+}^0,\pi _{+}^0)\!-\!{{\bf t}_0^-}({\bf u}_{-}^0,\pi_{-}^0)\!+\!{\mathcal P}\left(\gamma _{+}{\bf u}_{+}^0\right)={\bf 0} \mbox{ on } \partial \Omega ,
\end{equation}
lead to the equation
\begin{equation}
\label{q2}
\left(\frac{1}{2}(1+\mu ){\mathbb I}+(1-\mu ){\bf K}_{\partial \Omega }+{\mathcal V}_{\partial \Omega }{\mathcal P}\right)(\gamma _{+}{\bf u}_{+}^0)={\bf 0}.
\end{equation}
In view of Theorem \ref{single-layer-operator-Brinkman}(iii), the operator
$$\frac{1}{2}(1+\mu ){\mathbb I}+(1-\mu ){\bf K}_{\partial \Omega }+{\mathcal V}_{\partial \Omega }{\mathcal P}:H_\nu ^{s}(\partial \Omega ,\Lambda ^1TM)\to H_\nu ^{s}(\partial \Omega ,\Lambda ^1TM)$$
is invertible,
for any $s\in (0,1)$. Consequently, equation \eqref{q2} has only the trivial solution $\gamma _{+}{\bf u}_{+}^0={\bf 0}$. Then the well-posedness of the Dirichlet problem for the Stokes system ({see \cite[Theorem 5.6]{D-M} and \cite[Theorem 7.1]{M-T}}) as well as the assumption $\langle \widetilde E_+\pi _{+}^0,1\rangle _{\Omega _{+}}=0$ imply that ${\bf u}_{+}^0={\bf 0}$ and $\pi _{+}^0=0$ in $\Omega _{+}$. In addition, the first transmission condition in (\ref{q2-0}) implies that $\gamma _{-}{\bf u}_{-}^0={\bf 0} \mbox{ on } \partial \Omega $. By using again \cite[Theorem 5.1]{D-M} and the second condition in (\ref{q2-0}) we obtain ${\bf u}_{-}^0={\bf 0}$, $\pi _{-}^0=0$ in $\Omega _{-}$, i.e., the desired uniqueness result.
\end{proof}

\subsection{Well-posedness of the transmission problem \eqref{Poisson-transmission4-s}}
We construct a solution $\left(({\bf u}_{+},\pi _{+}),({\bf u}_{-},\pi _{-})\right)\in {\mathcal X}_{s}$ of the transmission problem \eqref{Poisson-transmission4-s} in the form
\begin{align}
\label{proof-1-a}
&{\bf u}_{+}={\mathcal N}_{\Omega _{+}}\tilde{\bf f}_{+}+{\bf v}_{+},\ \
\pi _{+}={\mathcal Q}_{\Omega _{+}}\tilde{\bf f}_{+}+p_{+}+c \mbox{ in } \Omega _{+},\\
\label{3.3-new-s-m}
&{\bf u}_{-}={\mathcal N}_{\Omega _{-}}\tilde{\bf f}_{-}+{\bf v}_{-},\ \
\pi _{-}={\mathcal Q}_{\Omega _{-}}\tilde{\bf f}_{-}+p_{-}+c \mbox{ in } \Omega _{-}.
\end{align}
Note that ${\mathcal N}_{{\Omega }_{\pm }}\tilde{\bf f}_{\pm }$ and ${\mathcal Q}_{{\Omega }_{\pm }}\tilde{\bf f}_{\pm }$ are the Newtonian velocity and pressure potentials for the Stokes system in ${\Omega }_{\pm }$, with a density $\tilde{\bf f}_{\pm }\in \widetilde{H}^{s-\frac{3}{2}}({\Omega }_{\pm },\Lambda ^1TM)$. {In order to satisfy the assumption $\langle \widetilde E_+\pi _{+},1\rangle _{\Omega _{+}}=0$, required by the definition of the space ${\mathcal X}_{s}$ (see \eqref{spaces-Poisson-transmission-MS} and \eqref{X1-sm}), we determine the pressure field $p_{+}$ in \eqref{proof-1-a} in the space $ H_*^{s-\frac{1}{2}}(\Omega _{+})$ and choose the constant $c\in {\mathbb R}$ such that}
\begin{align}
\label{c}
\big\langle \widetilde{E}_{+}({\mathcal Q}_{\Omega _{+}}\tilde{\bf f}_{+})+c,1\big\rangle _{\Omega _{+}}=0.
\end{align}

In view of the mapping properties \eqref{Newtonian-velocity} and \eqref{Newtonian-pressure} we have
\begin{align}
\label{short-notation-2}
{\mathcal N}_{{\Omega }_{\pm }}\tilde{\bf f}_{\pm }\in {H}^{s+\frac{1}{2}}(\Omega _{\pm },\Lambda ^1TM),\
{\mathcal Q}_{{\Omega }_{\pm }}\tilde{\bf f}_{\pm }\in {H}^{s-\frac{1}{2}}(\Omega _{\pm }).
\end{align}
In addition, by using the relations (\ref{Newtonian-s1}) we obtain that
\begin{align}
\label{proof-4}
{\bf L}{\mathcal N}_{\Omega _{\pm }}\tilde{\bf f}_{\pm }+ d{\mathcal Q}_{\Omega _{\pm }}\tilde{\bf f}_{\pm }={\tilde{\bf f}}_{\pm },\ \
\delta \left({\mathcal N}_{\Omega _{\pm }}\tilde{\bf f}_{\pm }\right)=0 \mbox{ in } \Omega _{\pm }.
\end{align}
Consequently, $\left(({\bf u}_{+},\pi _{+}),({\bf u}_{+},\pi _{+})\right)$ given by \eqref{proof-1-a}-\eqref{3.3-new-s-m} is a solution of the transmission problem of Poisson type (\ref{Poisson-transmission4-s}) in the space ${\mathcal X}_{s}$ if and only if $({\bf v}_{\pm },p_{\pm })$ satisfy the following transmission problem for the homogeneous Stokes system
\begin{equation}
\label{Poisson-transmission4-homogeneous}
\!\!\!\!\!\left\{\begin{array}{ll}
{\bf L}{\bf v}_{+}+dp_{+}={\bf 0},\ \delta {\bf v}_{+}=0
\ \mbox{ in }\ \Omega _{+}\\
{\bf L}{\bf v}_{-}+dp_{-}={\bf 0},\ \delta {\bf v}_{-}=0
\ \mbox{ in }\ \Omega _{-}\\
{\mu }\gamma _{+}{\bf v}_{+}-\gamma _{-}{\bf v}_{-}={\bf h}_0\in H^{s}_{\nu }(\partial \Omega ,\Lambda ^1TM)\\
{{\bf t}_0^+}({\bf v}_{+},p_{+})-{{\bf t}_0^-}({\bf v}_{-},p_{-})+{\mathcal P} \gamma _{+}{\bf v}_{+}={\bf r}_0\in H^{s-1}(\partial \Omega ,\Lambda ^1TM),\\
\end{array}\right.
\end{equation}
where
\begin{align}
\label{Poisson-transmission4-homogeneous1}
&{\bf h}_0:={\bf h}-\left({\mu }\gamma _{+}\left({\mathcal N}_{\Omega _{+}}\tilde{\bf f}_{+}\right)-\gamma _{-}\left({\mathcal N}_{\Omega _{-}}\tilde{\bf f}_{-}\right)\right),
\\
\label{Poisson-transmission4-homogeneous2}
&{\bf r}_0\!:=\!{\bf r}\!-\!\!\Big({{\bf t}_0^+}\big({\mathcal N}_{\Omega
_{+}}\tilde{\bf f}_{+},{\mathcal Q}_{\Omega _{+}}\tilde{\bf f}_{+}\big)\!-\!
{{\bf t}_0^-}\big({\mathcal N}_{\Omega _{-}}\tilde{\bf f}_{-},
{\mathcal Q}_{\Omega _{-}}\tilde{\bf f}_{-}\big){+\!{\mathcal P}\gamma _{+}\big({\mathcal N}_{\Omega _{+}}\tilde{\bf f}_{+}\big)}\Big).
\end{align}
Note that the left hand side of the last transmission condition in \eqref{Poisson-transmission4-homogeneous} is now expressed in terms of the canonical conormal derivatives (cf. \cite[Section 3.3]{Mikh}), due to the fact that the right hand sides of the first two equations in \eqref{Poisson-transmission4-homogeneous} are equal to zero.
Also note that the second relation in (\ref{Newtonian-s1}) combined with the divergence theorem and with the assumption ${\bf h}\in H^{s}_{\nu }(\partial \Omega ,\Lambda ^1TM)$ implies that ${\bf h}_0\in H^{s}_{\nu }(\partial \Omega ,\Lambda ^1TM)$. In addition, by (\ref{conormal-derivative-generalized-1}) and the assumption ${\mathcal P}\in L^{\infty }(\partial \Omega ,\Lambda ^1TM)$, we obtain that ${\bf r}_0\in H^{s-1}(\partial \Omega , \Lambda ^1TM)$.

Next we show the well-posedness of the transmission problem \eqref{Poisson-transmission4-homogeneous}.
\begin{lem}
\label{canonical-conormal-derivative}
{Let $M$ satisfy {Assumption} $\ref{H}$ and $\dim (M)\geq 2$}. Let $\Omega _{+}:=\Omega \subset M$ be a Lipschitz domain and $\Omega _{-}:=M\setminus \overline{\Omega }$. Let $s\in (0,1)$ and ${\mu >0}$ {be a given constant}. Then for all given data $\left(\tilde{\bf f}_{+},\tilde{\bf f}_{-},{\bf h},{\bf r}\right)\in {\mathcal Y}_{s}$, the transmission problem \eqref{Poisson-transmission4-homogeneous} has a unique solution $\left(({\bf v}_{+},p_{+}),({\bf v}_{-},p_{-})\right)\in {\mathcal X}_{s}$.
\end{lem}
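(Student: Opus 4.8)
The plan is to obtain uniqueness for free and spend all the effort on existence, continuous dependence on the data then yielding well-posedness. For uniqueness, note that the homogeneous version of \eqref{Poisson-transmission4-homogeneous} is precisely the homogeneous version of \eqref{Poisson-transmission4-s} with $V=0$ and $\tilde{\bf f}_\pm=\bf 0$; hence Lemma \ref{Poisson-transmission-Stokes-P} applies verbatim and forces the trivial solution. It therefore remains to construct a solution, and I would do this by a direct boundary integral method that collapses the two coupled transmission conditions onto the single boundary operator already shown to be invertible in Theorem \ref{single-layer-operator-Brinkman}(iii).

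First I would take the interface velocity ${\boldsymbol\omega}:=\gamma_+{\bf v}_+\in H_\nu^s(\partial\Omega,\Lambda^1TM)$ as the sole unknown. The first transmission condition then prescribes $\gamma_-{\bf v}_-=\mu{\boldsymbol\omega}-{\bf h}_0$, which again lies in $H_\nu^s(\partial\Omega,\Lambda^1TM)$ since ${\boldsymbol\omega}$ and ${\bf h}_0$ are $\nu$-orthogonal, so the flux compatibility required for divergence-free Stokes fields holds in each of $\Omega_\pm$. Solving the interior and exterior Stokes Dirichlet problems with these traces (well-posed by \cite[Theorem 5.6]{D-M} and \cite[Theorem 7.1]{M-T}) produces $({\bf v}_\pm,p_\pm)$ satisfying the two Stokes systems in \eqref{Poisson-transmission4-homogeneous} together with the first transmission condition. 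The Green representation formulas \eqref{sm-r1}--\eqref{sm-r2} and the boundary relations \eqref{sm-r3}--\eqref{sm-r4} then express the tractions through the traces, namely ${\mathcal V}_{\partial\Omega}{\bf t}_0^+({\bf v}_+,p_+)=\left(\tfrac12{\mathbb I}+{\bf K}_{\partial\Omega}\right){\boldsymbol\omega}$ and ${\mathcal V}_{\partial\Omega}{\bf t}_0^-({\bf v}_-,p_-)=\left(-\tfrac12{\mathbb I}+{\bf K}_{\partial\Omega}\right)(\mu{\boldsymbol\omega}-{\bf h}_0)$.

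Next I would impose the remaining (traction) transmission condition after applying ${\mathcal V}_{\partial\Omega}$ to it; substituting the two relations above makes everything collapse, exactly as in the passage leading to \eqref{q2}, to the single equation
\[
\left[\tfrac12(1+\mu){\mathbb I}+(1-\mu){\bf K}_{\partial\Omega}+{\mathcal V}_{\partial\Omega}{\mathcal P}\right]{\boldsymbol\omega}
={\mathcal V}_{\partial\Omega}{\bf r}_0+\left(\tfrac12{\mathbb I}-{\bf K}_{\partial\Omega}\right){\bf h}_0 .
\]
The operator on the left is precisely the isomorphism of $H_\nu^s(\partial\Omega,\Lambda^1TM)$ furnished by Theorem \ref{single-layer-operator-Brinkman}(iii), so, once the right-hand side is checked to belong to $H_\nu^s(\partial\Omega,\Lambda^1TM)$, it determines ${\boldsymbol\omega}$ uniquely and continuously in terms of $({\bf h}_0,{\bf r}_0)$, hence in terms of the original data through \eqref{Poisson-transmission4-homogeneous1}--\eqref{Poisson-transmission4-homogeneous2}. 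Reading the computation backwards, this choice yields ${\mathcal V}_{\partial\Omega}\big({\bf t}_0^+({\bf v}_+,p_+)-{\bf t}_0^-({\bf v}_-,p_-)+{\mathcal P}{\boldsymbol\omega}-{\bf r}_0\big)={\bf 0}$, so the traction residual lies in $\ker{\mathcal V}_{\partial\Omega}$. Since the trivial Stokes pair $({\bf 0},\mathrm{const})$ has trace ${\bf 0}$ and traction $-\nu$, its Green representation shows ${\mathcal V}_{\partial\Omega}\nu={\bf 0}$, and Assumption \ref{H} should preclude any further kernel, so the residual equals a multiple $c\,\nu$ of the conormal. The two additive pressure constants left free by the two Dirichlet problems are then exactly consumed: one is chosen to absorb this $c\,\nu$ in the traction condition (shifting ${\bf t}_0^\pm$ by multiples of $\nu$ leaves both Stokes systems and the trace condition invariant), the other to enforce $p_+\in H_*^{s-\frac12}(\Omega_+)$, landing $({\bf v}_\pm,p_\pm)$ in ${\mathcal X}_s$; boundedness of all operators involved gives continuous dependence.

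I expect the delicate point to be this last bookkeeping. The passage from the ${\mathcal V}_{\partial\Omega}$-regularized boundary equation back to the genuine traction identity requires identifying $\ker{\mathcal V}_{\partial\Omega}$ exactly as ${\mathbb R}\nu$ and matching its single degree of freedom against the two free pressure constants and the normalization $p_+\in H_*^{s-\frac12}(\Omega_+)$ demanded by ${\mathcal X}_s$. Verifying that the right-hand side of the reduced equation lies in $H_\nu^s(\partial\Omega,\Lambda^1TM)$, together with the Sobolev mapping properties of ${\mathcal V}_{\partial\Omega}$, ${\bf K}_{\partial\Omega}$ and of the Newtonian-potential data in \eqref{Poisson-transmission4-homogeneous1}--\eqref{Poisson-transmission4-homogeneous2}, are the remaining routine but necessary checks.
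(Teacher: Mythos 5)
Your argument is correct, but it takes a genuinely different route from the paper's. The paper constructs the solution by an ansatz with \emph{two} unknown densities, ${\bf v}_{\pm}={\bf W}_{\partial\Omega}{\bf\Phi}+{\bf V}_{\partial\Omega}{\boldsymbol\varphi}$, $p_{\pm}={\mathcal P}_{\partial\Omega}{\bf\Phi}+{\mathcal Q}_{\partial\Omega}{\boldsymbol\varphi}$, converts the two transmission conditions into the $2\times 2$ system ${\mathcal U}({\bf\Phi},{\boldsymbol\varphi})^{\top}=({\bf h}_0,{\bf r}_0)^{\top}$ on ${\mathbb X}_s$, and proves that ${\mathcal U}$ is an isomorphism by splitting ${\mathcal U}={\mathcal T}+{\mathcal C}$ with ${\mathcal T}$ triangular and Fredholm of index zero (via \eqref{Fredholm3-new-S}) and ${\mathcal C}$ compact (using \eqref{double-layer-Brinkman14b}), then showing injectivity at $s=\frac12$ by Green identities and transferring the kernel to all $s\in(0,1)$ by Lemma \ref{Fredholm-kernel}; the normalization constant and uniqueness are handled exactly as you do, via Lemma \ref{Poisson-transmission-Stokes-P}. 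You instead take the single unknown ${\boldsymbol\omega}=\gamma_{+}{\bf v}_{+}$, solve two Dirichlet problems, and use \eqref{sm-r3}--\eqref{sm-r4} to collapse the ${\mathcal V}_{\partial\Omega}$-regularized traction condition onto the scalar equation whose operator is precisely \eqref{Fredholm-sm-ms} --- in effect running the computation leading to \eqref{q2} in the paper's uniqueness lemma backwards as a construction. What this buys: a shorter existence proof that reuses Theorem \ref{single-layer-operator-Brinkman}(iii) instead of redoing Fredholm theory for a matrix operator (for $\mu=1$ you need the variant ${\mathbb I}+{\mathcal V}_{\partial\Omega}{\mathcal P}$, supplied by Lemma \ref{isom-sm} together with Lemma \ref{Fredholm-kernel}, just as the paper implicitly uses in Lemma \ref{Poisson-transmission-Stokes-P}). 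What it costs: the indirect passage through ${\mathcal V}_{\partial\Omega}$, which you correctly repair by identifying the traction residual as an element of ${\rm Ker}\,{\mathcal V}_{\partial\Omega}={\mathbb R}\nu$ --- note this is exactly \eqref{Fredholm-single-layer1}, so no hedging (``should preclude'') is needed --- plus reliance on Dirichlet well-posedness in both $\Omega_{+}$ and $\Omega_{-}$ for the full range $s\in(0,1)$, which the paper quotes anyway. Your routine checks do close: ${\mathcal V}_{\partial\Omega}{\bf r}_0$ is $\nu$-orthogonal since $\langle{\mathcal V}_{\partial\Omega}{\bf r}_0,\nu\rangle_{\partial\Omega}=\langle{\bf r}_0,{\mathcal V}_{\partial\Omega}\nu\rangle_{\partial\Omega}=0$, and $\left(\frac12{\mathbb I}-{\bf K}_{\partial\Omega}\right){\bf h}_0=-\gamma_{+}\left({\bf W}_{\partial\Omega}{\bf h}_0\right)$ is the trace of a divergence-free field, hence lies in $H^s_{\nu}(\partial\Omega,\Lambda^1TM)$.

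One point to tighten in your final bookkeeping: shifting the constant in $p_{+}$ to enforce $\langle\widetilde E_{+}p_{+},1\rangle_{\Omega_{+}}=0$ itself changes ${\bf t}_0^{+}$ by a multiple of $-\nu$ and therefore moves the traction residual, so the two free constants cannot be chosen independently in either order. Fix them sequentially: first normalize $p_{+}$, then choose the additive constant of $p_{-}$ to annihilate the (shifted) residual $c\,\nu$ --- equivalently, observe that only the \emph{difference} of the two constants enters the residual, so the resulting triangular $2\times2$ linear system for the constants is always solvable. With that ordering your construction is complete and, since every step (the inverse of \eqref{Fredholm-sm-ms}, the Dirichlet solution operators, and the constants) is bounded in terms of $({\bf h}_0,{\bf r}_0)$, you also recover the continuous dependence needed downstream in Theorem \ref{Poisson-transmission-S0}.
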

\begin{proof}
We are looking for a solution of problem (\ref{Poisson-transmission4-homogeneous})
in the form
\begin{align}
\label{Poisson-transmission5}
{{\bf v}_{+}^0}={\bf W}_{\partial \Omega }{\bf \Phi }+{\bf V}_{\partial \Omega }{\boldsymbol \varphi},\ \ {p_{+}^0}={\mathcal P}_{\partial \Omega }{\bf \Phi }+{\mathcal Q}_{\partial \Omega }{\boldsymbol \varphi}\ \mbox{ in }\ \Omega _{+},\\
\label{Poisson-transmission5-S}
{{\bf v}_{-}^0}={\bf W}_{\partial \Omega }{\bf \Phi }+{\bf V}_{\partial \Omega }{\boldsymbol \varphi},\ \ {p_{-}^0}={\mathcal P}_{\partial \Omega }{\bf \Phi }+{\mathcal Q}_{\partial \Omega }{\boldsymbol \varphi}\ \mbox{ in }\ \Omega _{-},
\end{align}
where $({\bf \Phi },{\boldsymbol \varphi})\in H^{s}_{\nu }(\partial \Omega ,\Lambda ^1TM)\times H^{s-1}(\partial \Omega ,\Lambda ^1TM)$ are unknown densities. By (\ref{double-layer-Brinkman10}), these layer potential representations satisfy the Stokes system in $\Omega _{\pm }$. In addition, by the relations (\ref{single-layer-Brinkman8a}) and (\ref{double-layer-Brinkman12}), and by the first transmission condition in (\ref{Poisson-transmission4-homogeneous}) we obtain the equation
\begin{align}
\label{Phi}
\left(-\frac{1}{2}(1+\mu ){\mathbb I}+(\mu -1){\bf K}_{\partial\Omega }\right){\bf \Phi }+(\mu -1){\mathcal V}_{\partial\Omega }{\boldsymbol \varphi}={{\bf h}_{0}} \mbox{ on } \partial\Omega .
\end{align}
Next by using the relations (\ref{eq:dlB13}) and (\ref{double-layer-Brinkman14b}), and the second transmission condition in (\ref{Poisson-transmission4-homogeneous}), we obtain the following
equation
\begin{align}
\label{Poisson-transmission6}
\left({\mathbb I}+{\mathcal P}{\mathcal V}_{\partial \Omega }\right){\boldsymbol \varphi}+\left({\bf D}_{\partial\Omega }^{+}-{\bf D}_{\partial\Omega }^{-}+{\mathcal P}\left(-\frac{1}{2}{\mathbb I}+{\bf K}_{\partial \Omega }\right)\right){\bf \Phi }={{\bf r}_{0}} \mbox{ on } \partial\Omega ,
\end{align}
where the layer potential operator
\begin{align}
\label{2.2tporousmedia-new-s}
{\bf D}_{\partial \Omega }^{+}-{\bf D}_{\partial \Omega }^{-}:H^{s}(\partial \Omega ,\Lambda ^1TM)\to H^{s-1}(\partial \Omega ,\Lambda ^1TM)
\end{align}
is linear and compact due to the property that $\left({\bf D}_{\partial \Omega }^{+}-{\bf D}_{\partial \Omega }^{-}\right){\bf \Phi}\in {\mathbb R}\nu $, for any ${\bf \Phi }\in H_\nu ^s(\partial \Omega ,\Lambda ^1TM)$ (see also \cite[Theorem 4.17]{10-new2}). Note that in the Euclidean setting the operator in (\ref{2.2tporousmedia-new-s}) is just the null operator (see, e.g., \cite[(4.117)]{M-W}, \cite[Theorem 3.1]{K-L-W}).

The system of equations (\ref{Phi}) and (\ref{Poisson-transmission6}) reduces to the equation
\begin{equation}
\label{Phi-varphi}
{\mathcal U}({\bf \Phi },{\boldsymbol \varphi})^{\top }=\left({{\bf h}_{0}},{{\bf r}_{0}}\right)^{\top } \mbox{ in } {\mathbb X}_s,
\end{equation}
with the unknown $({\bf \Phi },{\boldsymbol \varphi})^{\top }\in {\mathbb X}_s$, where
\begin{align}
\label{space-S}
{\mathbb X}_s:=H_{\nu }^{s}(\partial \Omega ,\Lambda ^1TM)\times H^{s-1}(\partial \Omega ,\Lambda ^1TM),
\end{align}
${\mathcal U}:{\mathbb X}_s\to {\mathbb X}_s$ is the {matrix} operator
\begin{equation}
\label{3.7operator-new1}
{\mathcal U}:=\left(
\begin{array}{cc}
{\mathcal K}_{\mu ; \partial \Omega } \ & \ (\mu -1){\mathcal V}_{\partial\Omega }\\
{\bf D}_{\partial\Omega }^{+}-{\bf D}_{\partial\Omega }^{-}+{\mathcal P}\left(-\frac{1}{2}{\mathbb I}+{\bf K}_{\partial \Omega }\right) \ & \ {\mathbb I}+{\mathcal P}{\mathcal V}_{\partial \Omega }
\end{array}
\right),
\end{equation}
and ${\mathcal K}_{\mu ; \partial \Omega }:H_\nu ^{s}(\partial \Omega ,\Lambda ^1TM)\to H_\nu ^{s}(\partial \Omega ,\Lambda ^1TM)$ is the operator given by
\begin{align}
\label{new-oper}
{\mathcal K}_{\mu ; \partial \Omega }:=-\frac{1}{2}(1+\mu ){\mathbb I}+(\mu -1){\bf K}_{\partial\Omega }.
\end{align}
The operator (\ref{3.7operator-new1}) can be written as
\begin{equation}
\label{decomposition}
{\mathcal U}={\mathcal T}+{\mathcal C},
\end{equation}
where
\begin{equation}
\label{T}
\!\!\!{\mathcal T}\!\!=\!\!\left(\begin{array}{ccc}
{\mathcal K}_{\mu ; \partial \Omega } & (\mu -1){\mathcal V}_{\partial\Omega }\\
0 & {\mathbb I}
\end{array}
\right),\, {\mathcal C}\!\!=\!\!
\left(\begin{array}{ccc}
{\bf 0} &  {\bf 0}\\
{\bf D}_{\partial\Omega }^{+}-{\bf D}_{\partial\Omega }^{-}\!+\!{\mathcal P}\left(-\frac{1}{2}{\mathbb I}\!+\!{\bf K}_{\partial \Omega }\right) &  {\mathcal P}{\mathcal V}_{\partial \Omega }
\end{array}
\right).
\end{equation}
We now show that the operator ${\mathcal T}:{\mathbb X}_s\to {\mathbb X}_s$ is Fredholm with index zero for any $s\in (0,1)$ and $\mu >0$.
\begin{itemize}
\item[(i)]
If $\mu =1$ then ${\mathcal T}$ reduces to the isomorphism
\begin{equation}
\label{Fredholm-compact2-new} \left(
\begin{array}{ccc}
-{\mathbb I}\ & \ {\bf 0} \\
{\bf 0}\ & \ {\mathbb I} \\
\end{array}
\right).
\end{equation}
\item[(ii)]
If $\mu \in (0,+\infty )\setminus \{1\}$, then the operator ${\mathcal K}_{\mu ; \partial \Omega }$ given by (\ref{new-oper}) can be written as
\begin{align}
\label{new-oper-1}
{\mathcal K}_{\mu ; \partial \Omega }=(\mu -1)\left(\frac{1}{2}\frac{1+\mu }{1-\mu }{\mathbb I}+{\bf K}_{\partial\Omega }\right),
\end{align}
which is Fredholm with index zero whenever $\mu \in (0,1)$ due to the Fredholm and zero index property of the operators in \eqref{Fredholm3-new-S}. If $\mu \in (1,+\infty )$, this property is still valid. Indeed, we have $\mu ^{-1}\in (0,1)$, and {operator} (\ref{new-oper-1}) can be written in the equivalent form
\begin{align}
\label{new-oper-2}
{\mathcal K}_{\mu ; \partial \Omega }=(\mu -1)\left(-\frac{1}{2}\frac{1+\mu ^{-1}}{1-\mu ^{-1}}{\mathbb I}+{\bf K}_{\partial\Omega }\right).
\end{align}
Then, by using once again the Fredholm and zero index property of the operators in \eqref{Fredholm3-new-S}, we obtain the desired result.
\end{itemize}
Consequently, the operator ${\mathcal K}_{\mu ; \partial \Omega }:H_{\nu }^{s}(\partial \Omega ,\Lambda ^1TM)\to H_{\nu }^{s}(\partial \Omega ,\Lambda ^1TM)$ given by (\ref{new-oper-1}) is Fredholm with index zero, and then the operator ${\mathcal T}:{\mathbb X}_s\to {\mathbb X}_s$ defined in \eqref{T} is Fredholm with index zero as well, for any $\mu \in (0,+\infty )$ and $s\in (0,1)$. The operator ${\mathcal C}:{\mathbb X}_s\to {\mathbb X}_s$ is linear and compact due to the compactness of the operator in (\ref{2.2tporousmedia-new-s}) and of the operators
\begin{align}
&{\mathcal P}\left(-\frac{1}{2}{\mathbb I}+{\bf K}_{\partial \Omega }\right):H_\nu ^s(\partial \Omega ,\Lambda ^1TM)\to H^{s-1}(\partial \Omega ,\Lambda ^1TM),\nonumber\\
&{\mathcal P}{\mathcal V}_{\partial \Omega }:H^{s-1}(\partial \Omega ,\Lambda ^1TM)\to H^{s-1}(\partial \Omega ,\Lambda ^1TM).\nonumber
\end{align}
Hence the operator ${\mathcal U}:{\mathbb X}_s\to {\mathbb X}_s$ given by (\ref{3.7operator-new1}) is   Fredholm with index zero for any $s\in (0,1)$. We now show that ${\mathcal U}$ is also one-to-one, i.e.,
\begin{equation}
\label{kernel-Fredholm1-new}
{\rm{Ker}}\left\{{\mathcal U}:{\mathbb X}_s\to {\mathbb X}_s\right\}=\{0\}.
\end{equation}
To this {end}, we use the continuity of the embedding ${\mathbb X}_s\hookrightarrow {\mathbb X}_{\frac{1}{2}}$, which has dense range for any $s\in \left(\frac{1}{2},1\right)$, while the continuous embedding ${\mathbb X}_{\frac{1}{2}}\hookrightarrow {\mathbb X}_s$ has dense range for any $s\in \left(0,\frac{1}{2}\right)$. Then by Lemma \ref{Fredholm-kernel} we obtain the equality
\begin{equation}
\label{kernel-Fredholm1-new-S}
{\rm{Ker}}\left\{{\mathcal U}:{\mathbb X}_s\to {\mathbb X}_s\right\}={\rm{Ker}}\left\{{\mathcal U}:{\mathbb X}_{\frac{1}{2}}\to {\mathbb X}_{\frac{1}{2}}\right\},\ \forall \ s\in \left(0,1\right),
\end{equation}
which shows that property (\ref{kernel-Fredholm1-new}) holds if and only if
\begin{equation}
\label{kernel-Fredholm1-new-S1}
{\rm{Ker}}\left\{{\mathcal U}:{\mathbb X}_{\frac{1}{2}}\to {\mathbb X}_{\frac{1}{2}}\right\}=\{0\}.
\end{equation}

Let $\left(\Phi ^0,{\boldsymbol \varphi}^0\right)^{\top }\in {\rm{Ker}}\left\{{\mathcal U}:{\mathbb X}_{\frac{1}{2}}\to {\mathbb X}_{\frac{1}{2}}\right\}$, and consider the layer potentials
\begin{align}
\label{3.3-new-aaa}
{\bf u}^{0}={\bf W}_{\partial\Omega }{\bf \Phi }^0+{\bf V}_{\partial\Omega }{\boldsymbol \varphi}^0,\quad
\pi^{0}={\mathcal P}_{\partial\Omega }{\bf \Phi }^0+{\mathcal Q}_{\partial\Omega }
{\boldsymbol \varphi}^0 \mbox{ in } M\setminus \partial\Omega .
\end{align}
Then we have the following inclusions for the restrictions to $\Omega_\pm$,
\begin{align}
\label{4-43}
\left({\bf u}^{0}|_{\Omega _{\pm }},\pi^{0}|_{\Omega _{\pm }}\right)\in H_{\delta}^{1}({\Omega }_{\pm },\Lambda ^1TM)\times L^2({\Omega }_{\pm }),
\end{align}
and the following relations on $\partial \Omega $
\begin{align}
\label{q1}
&{\mu }\gamma _{+}\left({\bf u}^{0}|_{\Omega _{+}}\right)-\gamma _{-}\left({\bf u}^{0}|_{\Omega _{-}}\right)={\bf 0},\\
\label{q0}
&{{\bf t}_0^+}({\bf u}^{0}|_{\Omega _{+}},\pi^{0}|_{\Omega _{+}})-{{\bf t}_0^-}({\bf u}^{0}|_{\Omega _{-}},\pi^{0}|_{\Omega _{-}})+{\mathcal P}{\mathcal V}_{\partial \Omega }\left(\gamma _{+}\left({\bf u}^{0}|_{\Omega _{+}}\right)\right)={\bf 0}.
\end{align}
Therefore, the elements $\left({\bf u}^{0}|_{\Omega _{\pm }},\pi^{0}|_{\Omega _{\pm }}\right)\in H_{\delta}^{1}({\Omega }_{\pm },\Lambda ^1TM)\times L^2({\Omega }_{\pm })$ determine a solution of the homogeneous transmission problem associated to (\ref{Poisson-transmission4-s}) in $\left(H_\delta ^{s+\frac{1}{2}}(\Omega _{+},\Lambda ^1TM)\times H^{s-\frac{1}{2}}(\Omega _{+})\right)\times \left(H_\delta ^{s+\frac{1}{2}}(\Omega _{-},\Lambda ^1TM)\times H^{s-\frac{1}{2}}(\Omega _{-})\right)$. Then by using the Green formula \eqref{conormal-derivative-generalized-2} (with $V=0$ and $s=\frac{1}{2}$) in each of the domains $\Omega _{+}$ and $\Omega _{-}$ and the positivity condition \eqref{Poisson-transmission3} satisfied by ${\mathcal P}$ we deduce (as in \eqref{eq1-s}-\eqref{uniqueness-2}) that $\gamma _{+}\left({\bf u}^{0}|_{\Omega _{+}}\right)={\bf 0}$. The uniqueness result for the Dirichlet problem for the Stokes system (cf. \cite[Theorem 5.1]{D-M}) implies that ${\bf u}^0|_{\Omega _{\pm }}={\bf 0}$ and $\pi ^0|_{\Omega _{\pm }}=c_{\pm }\in {\mathbb R}$ in $\Omega _{\pm }$. By using once again the transmission condition \eqref{q0} and the relation $\gamma _{+}\left({\bf u}^{0}|_{\Omega _{+}}\right)={\bf 0}$, we obtain $c_{+}=c_{-}$. Moreover, formulas (\ref{single-layer-Brinkman8a}), (\ref{double-layer-Brinkman12}), (\ref{eq:dlB13}) and (\ref{double-layer-Brinkman14b}), applied to the layer potentials (\ref{3.3-new-aaa}), together with the conditions $\gamma _{+}\left({\bf u}^{0}|_{\Omega _{+}}\right)={\bf 0}$ and \eqref{q0} yield
\begin{align}
\label{3.3-new1f}
&{\bf 0}=\gamma _{+}\left({\bf u}^{0}|_{\Omega _{+}}\right)-\gamma _{-}\left({\bf u}^{0}|_{\Omega _{-}}\right)=-{\bf \Phi }^0\ \mbox{ on }\ \partial\Omega \\
\label{3.3-new2a}
&{\bf 0}={{\bf t}_0^+}({\bf u}^{0}|_{\Omega _{+}},\pi^{0}|_{\Omega _{+}})-{{\bf t}_0^-}({\bf u}^{0}|_{\Omega _{-}},\pi^{0}|_{\Omega _{-}})={\boldsymbol \varphi}^0\ \mbox{ on }\ \partial\Omega ,
\end{align}
and hence that ${\bf \Phi }^0={\bf 0}$ and ${\boldsymbol \varphi}^0={\bf 0}$.

Therefore, the Fredholm operator of index zero ${\mathcal U}:{\mathbb X}_{\frac{1}{2}}\to {\mathbb X}_{\frac{1}{2}}$ is one-to-one, and hence isomorphism. This result and (\ref{kernel-Fredholm1-new-S}) imply that ${\mathcal U}:{\mathbb X}_{s}\to {\mathbb X}_{s}$ is an isomorphism for any $s\in (0,1)$.
Then equation (\ref{Phi-varphi}) has a unique solution $({\bf \Phi },{\boldsymbol \varphi})^{\top }={\mathcal U}^{-1}\left({\bf h}_0,{\bf r}_0\right)^{\top}\in {\mathbb X}_s$, and the layer potentials (\ref{Poisson-transmission5}) and (\ref{Poisson-transmission5-S}) determine a solution $\left(({\bf v}_{+}^0,p_{+}^0),({\bf v}_{-}^0,p_{-}^0)\right)$ of the problem (\ref{Poisson-transmission4-homogeneous}) in the space
$\left(H_\delta ^{s+\frac{1}{2}}(\Omega _{+},\Lambda ^1TM)\!\times \!H^{s-\frac{1}{2}}(\Omega _{+})\right)\!\times \!\left(H_\delta ^{s+\frac{1}{2}}(\Omega _{-},\Lambda ^1TM)\!\times \! H^{s-\frac{1}{2}}(\Omega _{-})\right)$.
Then by Lemma~\ref{Poisson-transmission-Stokes-P} the element
\begin{align}
\label{solution-c}
\left(({\bf v}_{+},p_{+}),({\bf v}_{-},p_{-})\right)=\left(({\bf v}_{+}^0,p_{+}^0+c_0),({\bf v}_{-}^0,p_{-}^0+c_0)\right),
\end{align}
where the constant $c_0\in {\mathbb R}$ is chosen such that
\begin{align}
\label{c0}
\left\langle \widetilde E_{+}(p_{+}^0)+c_0,1\right\rangle _{\Omega _{+}}=0,
\end{align}
is the unique solution of problem (\ref{Poisson-transmission4-homogeneous}) in the space ${\mathcal X}_{s}$ given by \eqref{X1-sm}.
\end{proof}

Next we show the well-posedness of the Poisson problem of transmission type \eqref{Poisson-transmission4-s} in the space ${\mathcal X}_{s}$, $s\in (0,1)$.
\begin{thm}
\label{Poisson-transmission-S0} {Let $M$ satisfy {Assumption} $\ref{H}$ and $\dim (M)\geq 2$}. Let $\Omega _{+}:=\Omega \subset M$ be a Lipschitz domain. {Let $\Omega _{-}:=M\setminus \overline{\Omega }$ satisfy Assumption $\ref{connected-set}$}. Let $s\in (0,1)$ and ${\mu >0}$ {be a given constant}. Then for all given data $\left(\tilde{\bf f}_{+},\tilde{\bf f}_{-},{\bf h},{\bf r}\right)\in {\mathcal Y}_{s}$, the Poisson problem of transmission type \eqref{Poisson-transmission4-s} has a unique solution $\left(({\bf u}_{+},\pi _{+}),({\bf u}_{-},\pi _{-})\right)\in {\mathcal X}_{s}$ and there exists a linear and continuous operator
\begin{align}
\label{T-1}
{\mathcal S}_0:{\mathcal Y}_{s}\to {\mathcal X}_{s}
\end{align}
delivering this solution. Hence there exists a constant $C\equiv C(s,\mu ,{\mathcal P},\partial \Omega )>0$ such that
\begin{align}
\label{estimate1}
\left\|\left(({\bf u}_{+},\pi _{+}),({\bf u}_{-},\pi _{-})\right)\right\|_{{\mathcal X}_{s}}\leq C\left\|\left(\tilde{\bf f}_{+},\tilde{\bf f}_{-},{\bf h},{\bf r}\right)\right\|_{{\mathcal Y}_{s}}.
\end{align}
\end{thm}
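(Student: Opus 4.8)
The plan is to assemble the pieces already in place: the decomposition \eqref{proof-1-a}--\eqref{3.3-new-s-m} reducing \eqref{Poisson-transmission4-s} to the homogeneous-volume transmission problem \eqref{Poisson-transmission4-homogeneous}, the well-posedness of the latter (Lemma \ref{canonical-conormal-derivative}), and the uniqueness result (Lemma \ref{Poisson-transmission-Stokes-P}).

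First, for existence I would seek the solution in the form \eqref{proof-1-a}--\eqref{3.3-new-s-m}, i.e., as the Newtonian potentials ${\mathcal N}_{\Omega _{\pm }}\tilde{\bf f}_{\pm }$, ${\mathcal Q}_{\Omega _{\pm }}\tilde{\bf f}_{\pm }$ carrying the source terms, plus a correction $({\bf v}_{\pm },p_{\pm })$. By \eqref{proof-4} the Newtonian part already satisfies the two volume equations of \eqref{Poisson-transmission4-s} with the prescribed right-hand sides, so $\left(({\bf u}_{+},\pi _{+}),({\bf u}_{-},\pi _{-})\right)$ solves \eqref{Poisson-transmission4-s} if and only if the correction $({\bf v}_{\pm },p_{\pm })$ solves \eqref{Poisson-transmission4-homogeneous} with the modified boundary data ${\bf h}_0$, ${\bf r}_0$ from \eqref{Poisson-transmission4-homogeneous1}--\eqref{Poisson-transmission4-homogeneous2}. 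As noted there, ${\bf h}_0\in H^{s}_{\nu }(\partial \Omega ,\Lambda ^1TM)$ and ${\bf r}_0\in H^{s-1}(\partial \Omega ,\Lambda ^1TM)$, so $({\bf h}_0,{\bf r}_0)\in {\mathbb X}_s$. Lemma \ref{canonical-conormal-derivative} then supplies a unique solution $\left(({\bf v}_{+},p_{+}),({\bf v}_{-},p_{-})\right)\in {\mathcal X}_{s}$ of \eqref{Poisson-transmission4-homogeneous}, and hence a solution of \eqref{Poisson-transmission4-s} in ${\mathcal X}_{s}$; the constant $c$ in \eqref{proof-1-a}--\eqref{3.3-new-s-m} is fixed by \eqref{c} so that the normalization $\langle \widetilde E_{+}\pi _{+},1\rangle _{\Omega _{+}}=0$ required by ${\mathcal X}_{s}$ holds. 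Uniqueness is immediate: the difference of two solutions in ${\mathcal X}_{s}$ solves the homogeneous version of \eqref{Poisson-transmission4-s}, which by Lemma \ref{Poisson-transmission-Stokes-P} must vanish.

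It remains to exhibit the linear and continuous solution operator ${\mathcal S}_0$ and to deduce \eqref{estimate1}. I would express ${\mathcal S}_0$ as an explicit composition of bounded maps. The source contribution $\tilde{\bf f}_{\pm }\mapsto \left({\mathcal N}_{\Omega _{\pm }}\tilde{\bf f}_{\pm },{\mathcal Q}_{\Omega _{\pm }}\tilde{\bf f}_{\pm }\right)$ is continuous by \eqref{Newtonian-velocity}--\eqref{Newtonian-pressure}, and \eqref{c} selects $c$ as a continuous linear functional of $\tilde{\bf f}_{+}$. The data-modification map $\left(\tilde{\bf f}_{\pm },{\bf h},{\bf r}\right)\mapsto ({\bf h}_0,{\bf r}_0)$ is continuous from ${\mathcal Y}_{s}$ into ${\mathbb X}_s$, since it combines the trace operators $\gamma _{\pm }$, the multiplication by ${\mathcal P}\in L^{\infty }$, and the conormal derivatives of the Newtonian potentials, all of which are continuous (the last by Lemma \ref{conormal-derivative-generalized-n1} together with \eqref{Newtonian-velocity}--\eqref{Newtonian-pressure}). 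Composing with the isomorphism ${\mathcal U}^{-1}:{\mathbb X}_s\to {\mathbb X}_s$ established in the proof of Lemma \ref{canonical-conormal-derivative} recovers the densities $({\bf \Phi },{\boldsymbol \varphi })$ continuously, and the layer potentials \eqref{Poisson-transmission5}--\eqref{Poisson-transmission5-S} deliver $({\bf v}_{\pm },p_{\pm })$ continuously from those densities. Adding back the Newtonian part and the constant, ${\mathcal S}_0$ is a finite composition of bounded operators, hence bounded, and \eqref{estimate1} follows with $C=\|{\mathcal S}_0\|$.

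The main obstacle, though a mild one since the analytic heavy lifting is already carried out in the two lemmas, is the bookkeeping of continuity for the data-modification map into ${\mathbb X}_s$; in particular one must verify that the conormal-derivative terms entering ${\bf r}_0$ are well defined and depend continuously on $\tilde{\bf f}_{\pm }$. This is precisely what the canonical conormal derivative of Lemma \ref{conormal-derivative-generalized-n1}, applied to the Newtonian potentials through their membership in $\pmb{\mathcal H}^{s+\frac{1}{2}}(\Omega _{\pm },{\mathcal L}_{0})$, is designed to guarantee.
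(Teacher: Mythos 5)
Your proposal is correct and follows essentially the same route as the paper's own proof: the decomposition \eqref{proof-1-a}--\eqref{3.3-new-s-m} with the normalization \eqref{c} reducing \eqref{Poisson-transmission4-s} to the homogeneous problem \eqref{Poisson-transmission4-homogeneous}, existence via Lemma \ref{canonical-conormal-derivative}, uniqueness via Lemma \ref{Poisson-transmission-Stokes-P}, and continuity of ${\mathcal S}_0$ from the boundedness of the Newtonian and layer potentials together with the isomorphism ${\mathcal U}^{-1}$. Your more explicit bookkeeping of the data-modification map $(\tilde{\bf f}_{\pm},{\bf h},{\bf r})\mapsto ({\bf h}_0,{\bf r}_0)$, including the use of the canonical conormal derivative of Lemma \ref{conormal-derivative-generalized-n1} on the Newtonian potentials, simply spells out what the paper compresses into one sentence.
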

\begin{proof}
The unique solution $\left(({\bf v}_{+},p_{+}),({\bf v}_{-},p_{-})\right)\in {\mathcal X}_{s}$ of the transmission problem (\ref{Poisson-transmission4-homogeneous}), together with relations \eqref{proof-1-a}, \eqref{3.3-new-s-m} {and \eqref{c}}, determine a solution $\left(({\bf u}_{+},\pi _{+}),({\bf u}_{-},\pi _{-})\right)$ of the Poisson problem of transmission type \eqref{Poisson-transmission4-s} in the space ${\mathcal X}_{s}$. In view of Lemma~\ref{Poisson-transmission-Stokes-P}, this solution is unique, whenever $s\in (0,1)$, and depends continuously on the given data, i.e., it satisfies an inequality of type \eqref{estimate1}, due to the boundedness of the involved Newtonian and boundary layer potentials and of the isomorphism ${\mathcal U}^{-1}$. Consequently, the operator delivering this solution, ${\mathcal S}_{0}:{\mathcal Y}_{s}\to {\mathcal X}_{s}$, is linear and continuous for any $s\in (0,1)$, as asserted.
\end{proof}

\subsection{Well-posedness of the transmission problem \eqref{Poisson-transmission4} with $V\neq 0$}

Next, we obtain the main result of this section, i.e., the well-posedness of transmission problem for the Stokes and Brinkman systems \eqref{Poisson-transmission4}, with $V\neq 0$.
\begin{thm}
\label{Poisson-transmission-Stokes} {Let $M$ satisfy {Assumption} $\ref{H}$ and $\dim (M)\geq 2$}. Let $\Omega _{+}:=\Omega \subset M$ be a Lipschitz domain. {Let $\Omega _{-}:=M\setminus \overline{\Omega }$ satisfy Assumption $\ref{connected-set}$.} Assume that {$V\in L^{\infty }(M,\Lambda^1TM\otimes \Lambda ^1TM)$} is a {symmetric} tensor field, which satisfies the positivity condition \eqref{V-positive}, and that ${\mathcal P}\in L^{\infty }(\partial \Omega ,\Lambda ^1TM\otimes \Lambda ^1TM)$ is a symmetric tensor field which satisfies condition \eqref{Poisson-transmission3}. Let $s\in (0,1)$ and ${\mu >0}$ {be a constant}. Then for all given data $\left(\tilde{\bf f}_{+},\tilde{\bf f}_{-},{\bf h},{\bf r}\right)\!\in \!{\mathcal Y}_{s}$, the Poisson problem of transmission type \eqref{Poisson-transmission4} has a unique solution $\left(({\bf u}_{+},\pi _{+}),({\bf u}_{-},\pi _{-})\right)\in {\mathcal X}_{s}$ and there exists a linear and continuous operator
\begin{align}
\label{T-1}
{\mathcal S}_{V}:{\mathcal Y}_{s}\to{\mathcal X}_{s}
\end{align}
delivering this solution. Hence there is a constant $C\equiv C(s,\mu ,V,{\mathcal P},\partial \Omega )>0$ such that
\begin{align}
\label{estimate1-SM}
\left\|\left(({\bf u}_{+},\pi _{+}),({\bf u}_{-},\pi _{-})\right)\right\|_{{\mathcal X}_{s}}\leq C\left\|\left(\tilde{\bf f}_{+},\tilde{\bf f}_{-},{\bf h},{\bf r}\right)\right\|_{{\mathcal Y}_{s}}.
\end{align}
\end{thm}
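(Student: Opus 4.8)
The plan is to treat the zeroth order term $V{\bf u}_+$ as a \emph{compact perturbation} and to reduce problem \eqref{Poisson-transmission4} to the pure Stokes transmission problem \eqref{Poisson-transmission4-s}, whose solution operator ${\mathcal S}_0$ is already available from Theorem \ref{Poisson-transmission-S0}. By Remark \ref{R2.4}, the identity \eqref{connection-conormals} gives ${\bf t}_V^+({\bf u}_+,\pi_+;\tilde{\bf f}_+)={\bf t}_0^+({\bf u}_+,\pi_+;\tilde{\bf f}_+-\widetilde E_+(V{\bf u}_+))$, and since $({\bf L}+V){\bf u}_++d\pi_+=\tilde{\bf f}_+|_{\Omega_+}$ is equivalent to ${\bf L}{\bf u}_++d\pi_+=(\tilde{\bf f}_+-\widetilde E_+(V{\bf u}_+))|_{\Omega_+}$, a triple ${\bf U}=(({\bf u}_+,\pi_+),({\bf u}_-,\pi_-))\in{\mathcal X}_s$ solves \eqref{Poisson-transmission4} if and only if it solves \eqref{Poisson-transmission4-s} with the datum $\tilde{\bf f}_+$ replaced by $\tilde{\bf f}_+-\widetilde E_+(V{\bf u}_+)$. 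One first checks that $\widetilde E_+(V{\bf u}_+)\in\widetilde H^{s-\frac32}(\Omega_+,\Lambda^1TM)$: the field $V\in L^{\infty}$ maps ${\bf u}_+\in H^{s+\frac12}(\Omega_+,\Lambda^1TM)$ into $L^2(\Omega_+,\Lambda^1TM)=\widetilde H^{0}(\Omega_+,\Lambda^1TM)$ after extension by zero (via \eqref{ext-1}), and $\widetilde H^{0}\hookrightarrow\widetilde H^{s-\frac32}$ for $s\in(0,1)$. Applying ${\mathcal S}_0$ thus recasts \eqref{Poisson-transmission4} as the operator equation $(\mathbb I+{\mathcal R}){\bf U}={\mathcal S}_0(\tilde{\bf f}_+,\tilde{\bf f}_-,{\bf h},{\bf r})$ in ${\mathcal X}_s$, where ${\mathcal R}{\bf U}:={\mathcal S}_0(\widetilde E_+(V{\bf u}_+),0,0,0)$.

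Next I would show that ${\mathcal R}:{\mathcal X}_s\to{\mathcal X}_s$ is compact. It factors through the bounded projection ${\bf U}\mapsto{\bf u}_+$, the bounded multiplication ${\bf u}_+\mapsto V{\bf u}_+$ from $H^{s+\frac12}(\Omega_+,\Lambda^1TM)$ into $\widetilde H^{0}(\Omega_+,\Lambda^1TM)$, the \emph{compact} (Rellich) embedding $\widetilde H^{0}(\Omega_+,\Lambda^1TM)\hookrightarrow\widetilde H^{s-\frac32}(\Omega_+,\Lambda^1TM)$, and the bounded operator ${\mathcal S}_0$ of Theorem \ref{Poisson-transmission-S0}. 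Hence $\mathbb I+{\mathcal R}$ is a Fredholm operator of index zero on ${\mathcal X}_s$, so it suffices to prove that it is injective.

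For injectivity I would take ${\bf U}\in{\rm Ker}(\mathbb I+{\mathcal R})$, i.e. a solution of the homogeneous version of \eqref{Poisson-transmission4} ($\tilde{\bf f}_\pm=0$, ${\bf h}=0$, ${\bf r}=0$), and argue by a Green-identity energy computation. Since the kernel is independent of $s\in(0,1)$ by Lemma \ref{Fredholm-kernel} (exactly as in Lemma \ref{canonical-conormal-derivative}, using the dense continuous embeddings ${\mathcal X}_s\hookrightarrow{\mathcal X}_{1/2}$ and ${\mathcal X}_{1/2}\hookrightarrow{\mathcal X}_s$), I may assume $s=\frac12$, so that ${\bf u}_\pm\in H^1(\Omega_\pm,\Lambda^1TM)$ and the boundary pairings below are legitimate. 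Applying the Green identity \eqref{conormal-derivative-generalized-2} in $\Omega_+$ (with the tensor $V$) and in $\Omega_-$ (with $V=0$), inserting $\delta{\bf u}_\pm=0$, and combining them through the homogeneous transmission conditions $\gamma_-{\bf u}_-=\mu\gamma_+{\bf u}_+$ and ${\bf t}_V^+({\bf u}_+,\pi_+;0)={\bf t}_0^-({\bf u}_-,\pi_-;0)-{\mathcal P}\gamma_+{\bf u}_+$, yields
\[
2\mu\|{\rm Def}\,{\bf u}_+\|_{L^2(\Omega_+)}^2+2\|{\rm Def}\,{\bf u}_-\|_{L^2(\Omega_-)}^2+\mu\langle V{\bf u}_+,{\bf u}_+\rangle_{\Omega_+}+\mu\langle{\mathcal P}\gamma_+{\bf u}_+,\gamma_+{\bf u}_+\rangle_{\partial\Omega}=0.
\]
As $\mu>0$ and each of the four terms is nonnegative by \eqref{V-positive-open} and the positivity condition \eqref{Poisson-transmission3} on ${\mathcal P}$, every term vanishes; in particular ${\rm Def}\,{\bf u}_+=0$ in $\Omega_+$ and ${\rm Def}\,{\bf u}_-=0$ in $\Omega_-$. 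The closing device is then a \emph{rescale-and-glue} step: the field ${\bf u}$ defined by ${\bf u}:={\bf u}_+$ on $\Omega_+$ and ${\bf u}:=\mu^{-1}{\bf u}_-$ on $\Omega_-$ has matching traces across $\partial\Omega$ (precisely because $\gamma_-{\bf u}_-=\mu\gamma_+{\bf u}_+$), hence lies in $H^1(M,\Lambda^1TM)$, and it is deformation-free on all of $M$. By Assumption \ref{H} there is no nontrivial Killing field, so ${\bf u}\equiv 0$, whence ${\bf u}_+=0$ and ${\bf u}_-=0$. Finally $d\pi_+=-V{\bf u}_+=0$ with the normalization $\langle\widetilde E_+\pi_+,1\rangle_{\Omega_+}=0$ forces $\pi_+=0$, while $d\pi_-=0$ together with the homogeneous conormal condition (which gives $\pi_-\nu=0$ on $\partial\Omega$) forces $\pi_-=0$. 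Thus ${\bf U}=0$.

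Consequently $\mathbb I+{\mathcal R}$ is an isomorphism of ${\mathcal X}_s$, and setting ${\mathcal S}_V:=(\mathbb I+{\mathcal R})^{-1}{\mathcal S}_0$ produces the unique solution, the bounded solution operator \eqref{T-1}, and the estimate \eqref{estimate1-SM}. I expect the genuine obstacle to lie in the injectivity step: concretely, in organizing the two Green identities so that only manifestly nonnegative quantities survive, and then in the observation that rescaling ${\bf u}_-$ by $\mu^{-1}$ makes the two deformation-free pieces fit together into a global $H^1$ Killing field to which Assumption \ref{H} applies. The reduction itself and the compactness/Fredholm bookkeeping are comparatively routine, as is the passage to $s=\frac12$ via Lemma \ref{Fredholm-kernel}.
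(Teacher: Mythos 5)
Your proposal is correct and, in its architecture, coincides with the paper's proof: the same reduction of \eqref{Poisson-transmission4} to the operator equation \eqref{equiv} via Remark \ref{R2.4} and the solution operator ${\mathcal S}_0$ of Theorem \ref{Poisson-transmission-S0}; the same compactness factorization $L^{\infty}\cdot H^{s+\frac{1}{2}}\hookrightarrow L^2\hookrightarrow \widetilde{H}^{s-\frac{3}{2}}$ yielding a Fredholm operator of index zero; the same passage to $s=\frac{1}{2}$ via Lemma \ref{Fredholm-kernel}; and the same Green-identity energy computation leading to \eqref{uniqueness-2}. The one point of divergence is the endgame of the injectivity step. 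The paper notes that the vanishing of the boundary term together with the strict positivity \eqref{Poisson-transmission3} forces $\gamma_{+}{\bf u}_{+}^0={\bf 0}$ (hence $\gamma_{-}{\bf u}_{-}^0={\bf 0}$), so ${\bf u}_{\pm}^0\in H_0^1(\Omega_{\pm},\Lambda^1TM)$, and then invokes the injectivity of ${\rm{Def}}:H_0^1(\Omega_{\pm},\Lambda^1TM)\to L^2(\Omega_{\pm},S^2T^*M)$ from \cite[(6.17)]{D-M}. You instead discard that trace information and glue ${\bf u}_{+}$ with $\mu^{-1}{\bf u}_{-}$ into a global field ${\bf u}\in H^1(M,\Lambda^1TM)$ (the traces match exactly because of the first transmission condition, so ${\rm{Def}}\,{\bf u}=0$ distributionally on all of $M$ with no jump term across $\partial\Omega$) and appeal to Assumption \ref{H}. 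This works, but it needs one more line than you give: Assumption \ref{H} concerns (smooth) Killing fields, so for an $H^1$ deformation-free field you should pass through ${\bf L}{\bf u}=2{\rm{Def}}^{*}{\rm{Def}}\,{\bf u}=0$ in $H^{-1}(M,\Lambda^1TM)$ and the invertibility of ${\bf L}:H^1(M,\Lambda^1TM)\to H^{-1}(M,\Lambda^1TM)$, which the paper records as a consequence of Assumption \ref{H}. Your variant buys an argument that avoids \cite[(6.17)]{D-M} and makes the role of the rescaling $\mu^{-1}$ transparent; the paper's route is slightly shorter, since the energy identity already hands it the vanishing traces. Both then dispatch the pressures in the same way: $d\pi_{\pm}=0$ on connected domains, the normalization $\langle \widetilde{E}_{+}\pi_{+},1\rangle_{\Omega_{+}}=0$ from \eqref{X1-sm}, and the homogeneous conormal condition giving $c_{-}\nu={\bf 0}$, hence $\pi_{\pm}=0$.
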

\begin{proof}
In view of Theorem \ref{Poisson-transmission-S0} there exists a linear and continuous operator ${\mathcal S}_0:{\mathcal Y}_{s}\to {\mathcal X}_{s}$ such that $\left(\left({\bf w}_{+},q_{+}\right),\left({\bf w}_{-},q_{-}\right)\right)={\mathcal S}_0\left({\widetilde{\bf F}}_{+},{\widetilde{\bf F}}_{-},{\bf H},{\bf R}\right)$ is the unique solution of the problem
\begin{equation}
\label{generalized-linear-Robin-N-S-D-B-F-new-SM}
\!\!\!\!\!\left\{\begin{array}{ll}
{\bf L}{\bf w}_{+}+dq_{+}={\widetilde{\bf F}}_{+}|_{\Omega _{+}},\ {\delta {\bf w}_{+}=0}
\ \mbox{ in }\ \Omega _{+},\\
{\bf L}{\bf w}_{-}+dq_{-}={\widetilde{\bf F}}_{-}|_{\Omega _{-}},\ {\delta {\bf w}_{-}=0}
\ \mbox{ in }\ \Omega _{-},\\
{\mu }\gamma _{+}{\bf w}_{+}-\gamma _{-}{\bf w}_{-}={\bf H}\ \mbox{ on }\ \partial \Omega ,\\
{{\bf t}_0^+}\left({\bf w}_{+},q_{+};{\widetilde{\bf F}}_{+}\right)
-{{\bf t}_0^-}\left({\bf w}_{-},q_{-};{\widetilde{\bf F}}_{-}\right)+{\mathcal P} \gamma _{+}{\bf w}_{+}={\bf R} \mbox{ on }\ \partial \Omega ,
\end{array}
\right.
\end{equation}
for all $\left({\widetilde{\bf F}}_{+},{\widetilde{\bf F}}_{-},{\bf H},{\bf R}\right)\in {\mathcal Y}_{s}$. Then {(see Remark \ref{R2.4})} we can rewrite the problem \eqref{Poisson-transmission4} as
{\begin{align}
\big(\big({\bf v}_{+},p_{+}\big),\big({\bf v}_{-},p_{-}\big)\big)
={\mathcal S}_0\left(\tilde{\bf f}_{+}\!-\!{\widetilde E}_{+}\left(V{\bf v}_{+}\right),\tilde{\bf f}_{-},{\bf h},{\bf r}\right),
\end{align}}
or, equivalently, as
\begin{align}
\label{equiv}
\big(\big({\bf v}_{+},p_{+}\big),\big({\bf v}_{-},p_{-}\big)\big)+{\mathcal S}_0\Big({\widetilde E}_{+}\left(V{\bf v}_{+}\right),{\bf 0},{\bf 0},{\bf 0}\Big)={\mathcal S}_0\left(\tilde{\bf f}_{+},\tilde{\bf f}_{-},{\bf h},{\bf r}\right),
\end{align}
where ${\widetilde E}_{+}$ is the operator of extension of vector fields (one forms) defined in $\Omega_+$ by zero on $M\setminus \Omega_{+}$.
Since ${\mathcal S}_0:{\mathcal Y}_{s}\to {\mathcal X}_{s}$ is linear and continuous and the linear map from ${\mathcal X}_{s}$ to ${\mathcal Y}_{s}$, which takes $\big(\big({\bf v}_{+},p_{+}\big),\big({\bf v}_{-},p_{-}\big)\big)$ to $\left({\widetilde E}_{+}\left(V{\bf v}_{+}\right),{\bf 0},{\bf 0},{\bf 0}\right)$, is compact, {due to the continuity of the inclusions \begin{align}
L^{\infty }(\Omega _{+},\Lambda^1TM\otimes \Lambda ^1TM)\cdot H^{s+\frac{1}{2}}(\Omega _{+},\Lambda^1TM)&\hookrightarrow L^2(\Omega _{+},\Lambda^1TM)\nonumber\\
&\hookrightarrow \widetilde{H}^{s-\frac{3}{2}}(\Omega _{+},\Lambda ^1TM),\nonumber
\end{align}
the last of them being compact}, we deduce that the left hand side of \eqref{equiv} defines a Fredholm operator of index zero for any $s\in (0,1)$, denoted by
\begin{align}
\label{T-V}
{\mathcal A}_{V}:{\mathcal X}_{s}\to {\mathcal X}_{s}.
\end{align}
Then by the density of the embedding ${\mathcal X}_{s}\hookrightarrow {\mathcal X}_{\frac{1}{2};\delta }$ whenever $s\in (0,\frac{1}{2})$, with the converse embedding in the case $s\in (\frac{1}{2},1)$, and by Lemma \ref{Fredholm-kernel} we obtain
\begin{align}
\label{s-b-1}
{\rm{Ker}}\left\{{\mathcal A}_{V}:{\mathcal X}_{s}\to {\mathcal X}_{s}\right\}=
{\rm{Ker}}\left\{{\mathcal A}_{V}:{\mathcal X}_{\frac{1}{2};\delta }\to {\mathcal X}_{\frac{1}{2};\delta }\right\},\ \forall \ s\in (0,1).
\end{align}
We will show that the transmission problem \eqref{Poisson-transmission4} has at most one solution in the space ${\mathcal X}_{\frac{1}{2};\delta }$ (i.e., for $s=\frac{1}{2}$) and hence that ${\rm{Ker}}\left\{{\mathcal A}_{V}:{\mathcal X}_{\frac{1}{2};\delta }\to {\mathcal X}_{\frac{1}{2};\delta }\right\}=\{0\}$. Then by \eqref{s-b-1}, ${\rm{Ker}}\left\{{\mathcal A}_{V}:{\mathcal X}_{s}\to {\mathcal X}_{s}\right\}=\{0\}$ for any $s\in (0,1)$. Equivalently, the Fredholm operator with index zero ${\mathcal A}_{V}:{\mathcal X}_{s}\to {\mathcal X}_{s}$ is an isomorphism for any $s\in (0,1)$. Then it follows that equation \eqref{equiv} has a unique solution in ${\mathcal X}_{s}$ and that accordingly problem \eqref{Poisson-transmission4} has a unique solution in ${\mathcal X}_{s}$ for any $s\in (0,1)$.

Let us now show that problem \eqref{Poisson-transmission4} has at most one solution for $s=\frac{1}{2}$.
Indeed, if we assume that $\left(({\bf u}_{+}^0,\pi _{+}^0),({\bf u}_{-}^0,\pi _{-}^0)\right)\in {\mathcal X}_{\frac{1}{2};\delta }$ is a solution of the homogeneous problem associated to \eqref{Poisson-transmission4}, then by applying the {Green identity} (\ref{conormal-derivative-generalized-2}) in each of the domains $\Omega _{+}$ and $\Omega _{-}$, we obtain the relations
\begin{align}
\label{eq1-s}
2\langle {\rm{Def}}\ {\bf u}_{+}^0,{\rm{Def}}\ {\bf u}_{+}^0\rangle
_{\Omega _{+}}+\langle V{\bf u}_{+}^0,{\bf u}_{+}^0\rangle _{\Omega _{+}}&=
\langle {{\bf t}_V^+}({\bf u}_{+}^0,\pi _{+}^0),\gamma _{+}{\bf u}_{+}^0\rangle _{\partial \Omega },
\end{align}
\begin{align}
\label{eq2-s}
2\langle {\rm{Def}}\ {\bf u}_{-}^0,{\rm{Def}}\ {\bf u}_{-}^0\rangle _{\Omega _{-}}&= -\langle {{\bf t}_0^-}({\bf u}_{-}^0,\pi _{-}^0),\gamma _{-}{\bf u}_{-}^0\rangle _{\partial \Omega }.
\end{align}
By using the transmission conditions satisfied by $({\bf u}_{+}^0,\pi _{+}^0)$ and $({\bf u}_{-}^0,\pi _{-}^0)$,
\begin{align}
\label{trans-SB}
\mu \gamma _{+}{\bf u}_{+}^0=\gamma _{-}{\bf u}_{-}^0,\ {{\bf t}_V^+}\left({\bf u}_{+}^0,\pi _{+}^0\right)
-{{\bf t}_0^-}\left({\bf u}_{-}^0,\pi _{-}^0\right)=-{\mathcal P}\gamma _{+}{\bf u}_{+}^0 \mbox{ on } \partial \Omega ,
\end{align}
we obtain that
\begin{align}
\label{uniqueness-2}
&2\mu \langle {\rm{Def}}\ {\bf u}_{+}^0,{\rm{Def}}\ {\bf u}_{+}^0\rangle _{\Omega _{+}}
+\mu \langle V{\bf u}_{+}^0,{\bf u}_{+}^0\rangle _{\Omega _{+}}+2\langle {\rm{Def}}\ {\bf u}_{-}^0,{\rm{Def}}\ {\bf u}_{-}^0\rangle _{\Omega _{-}}\nonumber\\
&=-\mu \langle {\mathcal P}\gamma _{+}{\bf u}_{+}^0,\gamma _{+}{\bf u}_{+}^0\rangle _{\partial \Omega }\leq -\mu C_{\mathcal P}\|\gamma _{+}{\bf u}_{+}^0\|_{L^2(\partial \Omega ,\Lambda ^1TM)}^2,
\end{align}
where the last inequality follows from the strong positivity condition (\ref{Poisson-transmission3}) satisfied by ${\mathcal P}$. However, the left hand side of the above equality is non-negative due to the positivity condition \eqref{V-positive} satisfied by $V$. {Thus, each side of \eqref{uniqueness-2} vanishes, and, in particular, we obtain $\gamma _{+}{\bf u}_{+}^0=\gamma _{-}{\bf u}_{-}^0={\bf 0}$ on $\partial \Omega ,$ i.e., ${\bf u}_{\pm }^0\in H_0^1(\Omega _{\pm },\Lambda ^1TM)$, where
$H_0^1(\Omega ,\Lambda ^1TM)$ is the closure of ${\mathcal D}(\Omega ,\Lambda^1TM)$ in the norm of $H^1(\Omega ,\Lambda ^1TM)$ (see \cite[Theorem 3.33]{Lean}).
Moreover, by \eqref{uniqueness-2}, ${\rm{Def}}\ {\bf u}_{\pm }^0=0$ in $\Omega _{\pm}$. Then the injectivity of the operator
\begin{align}
\label{def-one-to-one}
{\rm{Def}}:H_0^1(\Omega _{\pm },\Lambda ^1TM)\to L^2(\Omega _{\pm },S^2T^*M)
\end{align}
(see \cite[(6.17)]{D-M}) implies that ${\bf u}_{\pm }^0={\bf 0}$ in $\Omega _{\pm}$. Consequently, we find that
\begin{equation}
\label{mu-1}
{\bf u}_{\pm }^0={\bf 0},\ \pi _{\pm }^0=c_{\pm }\in {\mathbb R}\ \mbox{ in }\ \Omega _{\pm},
\end{equation}
where the last relations follow from the Brinkman system in $\Omega _{+}$ and the Stokes system in $\Omega _{-}$, respectively. However, the second transmission condition in \eqref{trans-SB} implies the relation ${\bf t}_{V}^{+}({\bf u}_{+}^0,\pi _{+}^0)={{\bf t}_0^-\left({\bf u}_{-}^0,\pi _{-}^0\right)}$ on $\partial \Omega $, which, together with
the condition $\langle \widetilde E_+\pi _{+}^0,1\rangle _{\Omega _{+}}=0$ {(see \eqref{X1-sm})}, shows that $c_{\pm }=0$.}
Therefore,
\begin{align}
\label{uniq-1-B}
{\bf u}_{\pm }^0={\bf 0},\ \pi _{\pm }^0=0\  \mbox{ in }\ \Omega _{\pm },
\end{align}
i.e., problem \eqref{Poisson-transmission4} has at most one solution in the space ${\mathcal X}_{\frac{1}{2};\delta }$ (i.e., in the case $s=\frac{1}{2}$), as desired. Consequently, problem \eqref{Poisson-transmission4} has a unique solution $\left(({\bf u}_{+},\pi _{+}),({\bf u}_{-},\pi _{-})\right)\in {\mathcal X}_{s}$ for any $s\in (0,1)$. Moreover, by \eqref{equiv} and \eqref{T-V}, $${\left(({\bf u}_{+},\pi _{+}),({\bf u}_{-},\pi _{-})\right)={\mathcal S}_V\left(\tilde{\bf f}_{+},\tilde{\bf f}_{-},{\bf h},{\bf r}\right),}$$ where the operator
\begin{align}
\label{S-V}
{{\mathcal S}_V:{\mathcal Y}_{s}\to {\mathcal X}_{s},\ \ {\mathcal S}_V={\mathcal A}_V^{-1}\circ {\mathcal S}_0}
\end{align}
delivering this solution, is linear and continuous, due to linearity and continuity of the maps ${\mathcal S}_0:{\mathcal Y}_{s}\!\to \!{\mathcal X}_{s}$ (see Theorem \ref{Poisson-transmission-S0}) and ${\mathcal A}_V^{-1}\!:\!{\mathcal X}_{s}\!\to \!{\mathcal X}_{s}$.
\end{proof}

\section{Transmission problems for the Navier-Stokes and generalized Darcy-Forchheimer-Brinkman systems}
\label{nonlin}
\subsection{Problem setting and preliminary lemma}
Let $s\in (0,1)$.
Let {$\mu >0$} and $k,\beta \in {\mathbb R}$ be nonzero constants. Let {$V\in L^{\infty }(M,\Lambda^1TM\otimes \Lambda ^1TM)$} be a {symmetric} tensor field satisfying the positivity condition \eqref{V-positive} and ${\mathcal P}\in L^{\infty }(\partial \Omega ,\Lambda ^1TM\otimes \Lambda ^1TM)$ be a symmetric tensor field satisfying the positivity condition (\ref{Poisson-transmission3}). Next we consider the following transmi\-ssion problem for the nonlinear {incompressible} Navier-Stokes and generalized Darcy-Forchheimer-Brinkman systems in two complementary Lipschitz domains $\Omega _{\pm }$ of a compact Riemannian manifold $M$, with $\dim (M)\in \{2,3\}$:
{\begin{align}
\label{Poisson-transmission-zero-order1}
\!\!\!\!\!\left\{\begin{array}{ll}
{\bf L}{\bf u}_{+}\!+\!V{\bf u}_{+}\!+\!d\pi _{+}\!=\!\tilde{\bf f}_{+}|_{\Omega _+}\!-\!\left(k|{\bf u}_{+}|{\bf u}_{+}\!+\!\beta \nabla _{{\bf u}_{+}}{\bf u}_{+}\right),\
{\delta {\bf u}_{+}\!=\!0} \mbox{ in } \Omega _{+},\\
{\bf L}{\bf u}_{-}+d\pi _{-}=\tilde{\bf f}_{-}|_{\Omega _{-}}-\nabla _{{\bf u}_{-}}{\bf u}_{-} \mbox{ in } \Omega _{-},\
{\delta {\bf u}_{-}=0} \mbox{ in } \Omega _{-},\\
{\mu }\gamma _{+}{\bf u}_{+}-\gamma _{-}{\bf u}_{-}={\bf h} \mbox{ on } \partial \Omega ,\\
{\bf t}_{V}^{+}\left({\bf u}_{+},\pi _{+};\tilde{\bf f}_{+}-{\widetilde{\mathcal I}_{k;\beta ;{\Omega_+}}{\bf u}_+}\right)
-{\bf t}_{0}^{-}\left({\bf u}_{-},\pi _{-};\tilde{\bf f}_{-}-{\widetilde{\mathcal I}_{0;1 ;{\Omega_-}}{\bf u}_-}\right)\\
\hspace{7em}+{\mathcal P}\gamma _{+}{\bf u}_{+}
={\bf r} \mbox{ on } \partial \Omega ,
\end{array}
\right.
\end{align}
where
\begin{align}
\label{nonlin-1}
\widetilde{\mathcal I}_{k_\pm;\beta_\pm ;{\Omega_\pm}}{\bf u}_\pm:
=k_\pm({\widetilde E}^0_\pm|{\bf u}_\pm|)E^{s+\frac{1}{2}}_\pm{\bf u}_\pm
+\beta_\pm ({\widetilde E}^{s-\frac{1}{2}}_\pm\nabla _{{\bf u}_\pm})E^{s+1}_\pm{\bf u}_\pm,
\end{align}
for ${\bf u}_\pm\in H_\delta ^{s+\frac{1}{2}}(\Omega _{\pm},\Lambda ^1TM)$, $\Omega_\pm \subset M$ and the constants $k_\pm,\beta_\pm \in {\mathbb R}$, ${\widetilde E}^{t}_\pm:H^{t}(\Omega_\pm)\to \widetilde H^{t}(\Omega_\pm)$ is the unique linear bounded extension operator for $-1/2<t<1/2$, cf. \cite[Theorem 2.16]{Mikh}, while ${E}^{t}_\pm:H^{t}(\Omega_\pm)\to H^{t}(M)$, $t\in\mathbb R$, is a (non-unique) linear bounded extension operator. {In view of \eqref{locally}}, the expression $({\widetilde E}_\pm^{s-\frac{1}{2}}\nabla _{{\bf u}_\pm}){E^{s+\frac{1}{2}}_\pm}{\bf u}_\pm$ is understood as
\begin{align}
\label{expression-E}
\left(({\widetilde E}_\pm^{s-\frac{1}{2}}\nabla _{{\bf u}_\pm})E^{s+\frac{1}{2}}_\pm{\bf u}_\pm\right)^\ell =&\left({\widetilde E}_\pm^{s-\frac{1}{2}}\partial _ju_\pm^\ell\right)E_\pm^{s+\frac{1}{2}}u_\pm^j\nonumber\\
&+\Gamma _{rj}^\ell\left({\widetilde E}_\pm^{s-\frac{1}{2}}u_\pm^j\right) E_\pm^{s+\frac{1}{2}}u_\pm^r.
\end{align}
For the {restrictions} of $\widetilde{\mathcal I}_{k_\pm;\beta_\pm ;{\Omega_\pm}}{\bf u}_\pm$, we evidently have,
$$
r_{\Omega_\pm}\widetilde{\mathcal I}_{k_\pm;\beta_\pm ;{\Omega_\pm}}{\bf u}_\pm=k_\pm|{\bf u}_\pm|{\bf u}_\pm+\beta_\pm \nabla _{{\bf u}_\pm}{\bf u}_\pm .
$$

We assume that the given data in (\ref{Poisson-transmission-zero-order1}), $\left(\tilde{\bf f}_{+},\tilde{\bf f}_{-},{\bf h},{\bf r}\right)$, belong to the space ${\mathcal Y}_{s}$ defined in (\ref{Y1-sm}), and that they are sufficiently small in a sense that will be described below. Then we show the existence and uniqueness of the solution $\left(({\bf u}_{+},\pi _{+}),({\bf u}_{-},\pi _{-})\right)\in {\mathcal X}_{s}$ of the transmission problem (\ref{Poisson-transmission-zero-order1}), where ${\mathcal X}_{s}$ is the space given by (\ref{X1-sm}). The proof of the existence and uniqueness result is mainly based on the well-posedness result established in Theorem \ref{Poisson-transmission-Stokes} and on a fixed point theorem.

We note {that}
problem \eqref{Poisson-transmission-zero-order1} with $V\equiv \alpha {\mathbb I}$ and $\alpha,k,\beta $ positive constants describes the flow of a viscous incompressible fluid through a porous medium, and the coefficients $\alpha ,k,\beta $ are determined by the physical properties of such a medium (see, e.g., \cite{Ni-Be} for further details).

{First we show the following result that plays a main role in the proof of the well-posedness of the problem \eqref{Poisson-transmission-zero-order1} in the case $s\in (0,1)$. {A slightly different approach to the} particular case {$s\in [\frac{1}{2}, 1)$ is discussed in Appendix \ref{A3}} (see also \cite[Lemma 7.5]{D-M} in the case of general $L^p$-Sobolev spaces,
and \cite[Lemma 5.1]{K-L-M-W} in the Euclidian setting).
\begin{lem}
\label{embeddings}
{Let $M$ be a compact Riemannian manifold with $\dim (M)\in \{2,3\}$}. Let $\Omega \subset M$ be a Lipschitz domain, {$s\in (0,1)$} and $k,\beta \in {\mathbb R}$ be given nonzero constants, and
{\begin{align}
\label{nonlin-2}
{\widetilde{\mathcal I}}_{k;\beta ;{\Omega }}({\bf v}):
=k({\widetilde E}^0|{\bf v}|)E^{s+\frac{1}{2}}{\bf v}
+\beta ({\widetilde E}^{s-\frac{1}{2}}\nabla _{\bf v})E^{s+\frac{1}{2}}{\bf v}.
\end{align}}
Then the operator
\begin{align}
\label{NS-s}
{\widetilde{\mathcal I}}_{k;\beta ;{\Omega }}:H^{s+\frac{1}{2}}({\Omega },\Lambda ^1TM)\to
\widetilde{H}^{s-\frac{3}{2}}(\Omega ,\Lambda ^1TM)
\end{align}
is a nonlinear continuous, positively homogeneous {of} order $2$, and bounded, in the sense that there exists a constant $C_1\equiv C_1({\Omega },k,\beta )>0$ such that
\begin{align}
\label{NS-s2}
&\|{\widetilde{\mathcal I}}_{k;\beta ;{\Omega }}({\bf u})\|_{\widetilde{H}^{s-\frac{3}{2}}(\Omega ,\Lambda ^1TM)}\leq C_{1}\|{\bf u}\|_{{H^{s+\frac{1}{2}}({\Omega },\Lambda ^1TM)}}^2.
\end{align}
In addition, the following inequality holds
\begin{align}
\label{NS-s3}
&\|{\widetilde{\mathcal I}}_{k;\beta ;{\Omega }}({\bf v})-{\widetilde{\mathcal I}}_{k;\beta ;{\Omega }}({\bf w})\|_{\widetilde{H}^{s-\frac{3}{2}}(\Omega ,\Lambda ^1TM)}\\
&\leq {C_1}\left(\|{\bf v}\|_{{H^{s+\frac{1}{2}}({\Omega },\Lambda ^1TM)}}+\|{\bf w}\|_{{H}^{s+\frac{1}{2}}({\Omega },\Lambda ^1TM)}\right)\|{\bf v}-{\bf w}\|_{{H^{s+\frac{1}{2}}({\Omega },\Lambda ^1TM)}},\nonumber
\end{align}
for all ${\bf v},{\bf w}\in H^{s+\frac{1}{2}}({\Omega },\Lambda ^1TM)$,
{and the operator $\widetilde{\mathcal I}_{k;\beta ;{\Omega }}$ does not depend on the chosen extension operator $E^{s+\frac{1}{2}}$ in definition \eqref{nonlin-2}}.
\end{lem}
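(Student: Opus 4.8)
The plan is to split $\widetilde{\mathcal I}_{k;\beta;\Omega}$ into its Darcy--Forchheimer part $k(\widetilde E^0|{\bf v}|)E^{s+\frac12}{\bf v}$ and its convective part $\beta(\widetilde E^{s-\frac12}\nabla_{\bf v})E^{s+\frac12}{\bf v}$, and to estimate each in $\widetilde H^{s-\frac32}(\Omega,\Lambda^1TM)$ by pointwise product (multiplication) estimates in Sobolev spaces. Writing the convective term in the local form \eqref{expression-E}, I would further separate the genuinely first-order piece $(\widetilde E^{s-\frac12}\partial_j u^\ell)E^{s+\frac12}u^j$ from the zero-order Christoffel piece $\Gamma_{rj}^\ell(\widetilde E^{s-\frac12}u^j)E^{s+\frac12}u^r$; the latter, like $|{\bf v}|{\bf v}$, is a product of two factors of the regularity of ${\bf v}$, whereas the former is the only one involving a derivative. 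The single analytic input I would invoke is the bilinear pointwise multiplication estimate $H^{\sigma_1}\!\cdot H^{\sigma_2}\hookrightarrow H^{\sigma_0}$, valid on $M$ (by localization and a partition of unity) whenever $\sigma_0\le\min\{\sigma_1,\sigma_2\}$, $\sigma_1+\sigma_2\ge0$ and $\sigma_1+\sigma_2-\sigma_0>\frac m2$. For the derivative piece I take $\sigma_1=s+\frac12$, $\sigma_2=s-\frac12$, $\sigma_0=s-\frac32$, so that $\sigma_1+\sigma_2-\sigma_0=s+\frac32$, and the condition $s+\frac32>\frac m2$ becomes $s>\frac{m-3}2$; this is exactly where the restriction $\dim(M)=m\in\{2,3\}$ together with $s>0$ is needed, and it is the main obstacle of the proof. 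For the quadratic pieces I may use the same estimate with $\sigma_1=\sigma_2=s+\frac12$, $\sigma_0=s-\frac32$ (equivalently the chain $H^{s+\frac12}(\Omega)\hookrightarrow L^q(\Omega)$ and $L^{q/2}(\Omega)\hookrightarrow\widetilde H^{s-\frac32}(\Omega)$ furnished by the Sobolev embedding theorem for $m\in\{2,3\}$), noting that $|{\bf v}|$ is handled merely as an $L^q$ factor since ${\bf v}\mapsto|{\bf v}|$ is pointwise Lipschitz.

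With these estimates in hand, the boundedness \eqref{NS-s2} follows by bounding each of the three pieces by $\|{\bf v}\|_{H^{s+\frac12}(\Omega,\Lambda^1TM)}^2$ and summing; the fact that the outer factors are the support-preserving extensions $\widetilde E^0$ and $\widetilde E^{s-\frac12}$ guarantees that the product is supported in $\overline\Omega$ and hence lies in $\widetilde H^{s-\frac32}(\Omega,\Lambda^1TM)$ rather than merely in $H^{s-\frac32}(M,\Lambda^1TM)$. Positive homogeneity of order $2$ is immediate, since for $t>0$ one has $|t{\bf v}|=t|{\bf v}|$ and both pieces are bilinear in the two occurrences of ${\bf v}$, so $\widetilde{\mathcal I}_{k;\beta;\Omega}(t{\bf v})=t^2\widetilde{\mathcal I}_{k;\beta;\Omega}({\bf v})$. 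For the Lipschitz-type bound \eqref{NS-s3} I would use the algebraic identities
\[
|{\bf v}|{\bf v}-|{\bf w}|{\bf w}=|{\bf v}|({\bf v}-{\bf w})+(|{\bf v}|-|{\bf w}|){\bf w},\qquad \nabla_{\bf v}{\bf v}-\nabla_{\bf w}{\bf w}=\nabla_{{\bf v}-{\bf w}}{\bf v}+\nabla_{\bf w}({\bf v}-{\bf w}),
\]
together with $\big||{\bf v}|-|{\bf w}|\big|\le|{\bf v}-{\bf w}|$; applying the multiplication estimates above to each summand produces the factor $\big(\|{\bf v}\|_{H^{s+\frac12}(\Omega,\Lambda^1TM)}+\|{\bf w}\|_{H^{s+\frac12}(\Omega,\Lambda^1TM)}\big)\|{\bf v}-{\bf w}\|_{H^{s+\frac12}(\Omega,\Lambda^1TM)}$ in the stated norms.

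Finally, continuity of $\widetilde{\mathcal I}_{k;\beta;\Omega}$ is a direct consequence of the local Lipschitz bound \eqref{NS-s3}. The independence of the result on the chosen extension $E^{s+\frac12}$ I would prove as follows: if $E_1,E_2$ are two such extensions then $\psi:=E_1{\bf v}-E_2{\bf v}\in H^{s+\frac12}(M,\Lambda^1TM)$ vanishes on the open set $\Omega$, so the difference of the two candidate values equals the product of $\psi$ with a distribution in $\widetilde H^{s-\frac12}(\Omega,\Lambda^1TM)$ (respectively in $\widetilde H^{0}(\Omega,\Lambda^1TM)$ for the Darcy term), whose support lies in $\overline\Omega$. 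Since $s\in(0,1)$ gives $s-\frac12\in(-\frac12,\frac12)$, the space ${\mathcal D}(\Omega,\Lambda^1TM)$ is dense in $\widetilde H^{s-\frac12}(\Omega,\Lambda^1TM)$; for $\chi\in{\mathcal D}(\Omega,\Lambda^1TM)$ the product $\chi\psi$ vanishes identically because $\psi\equiv0$ on $\Omega\supseteq{\rm{supp}}\,\chi$, and passing to the limit through the continuity of the multiplication map shows that the difference is zero in $\widetilde H^{s-\frac32}(\Omega,\Lambda^1TM)$. The main difficulty throughout remains the derivative piece of the convective term when $s<\frac12$, where $\partial_j u^\ell\in H^{s-\frac12}(\Omega)$ is a genuine distribution of negative order and only the sharp index count $s+\frac32>\frac m2$ for $m\le3$ keeps the product inside the target space.
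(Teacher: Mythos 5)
Your proposal is correct, and at the structural level it mirrors the paper's own proof: the same splitting into the Darcy term and the convective term, the same local-coordinate separation of the first-order piece $(\widetilde E^{s-\frac12}\partial_j u^\ell)E^{s+\frac12}u^j$ from the zero-order Christoffel piece, the same algebraic identities for the Lipschitz bound \eqref{NS-s3}, and a density argument for independence of the extension. Where you genuinely diverge is in the central analytic input. The paper never invokes a multiplication theorem with negative target directly for the tilde spaces: it proves the key estimate $\|(E^{s+\frac12}u)\widetilde v\|_{\widetilde H^{s-\frac32}(\Omega)}\le \tilde c\,\|u\|_{H^{s+\frac12}(\Omega)}\|\widetilde v\|_{\widetilde H^{s-\frac12}(\Omega)}$ by duality against $\Psi\in{\mathcal D}(\overline\Omega)$, reducing everything to the positive-order product $H^{s+\frac12}(\Omega)\cdot H^{\frac32-s}(\Omega)\hookrightarrow H^{t}(\Omega)$, $t\in[0,\frac12)$, and then defines the product for rough $u$ via a Cauchy-sequence limit of smooth approximants. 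You instead quote the sharp negative-target theorem $H^{s+\frac12}\cdot H^{s-\frac12}\hookrightarrow H^{s-\frac32}$ under $s+\frac32>\frac m2$ and upgrade to $\widetilde H^{s-\frac32}(\Omega)$ by support preservation of the extended product; note that your index count $\sigma_1+\sigma_2-\sigma_0=s+\frac32>\frac m2$ and the paper's constraint $2-t>\frac m2$ with $t\ge\frac12-s$ are the same inequality in dual form, so both proofs locate the restriction $m\in\{2,3\}$ at exactly the same spot. Your route is shorter but leans on the stronger off-the-shelf theorem (Behzadan--Holst) and requires the support argument, which you do supply; the paper's duality route has the advantage that the estimate manifestly depends only on the restriction norm $\|u\|_{H^{s+\frac12}(\Omega)}$, so extension-independence falls out for free, whereas you need the separate density argument in $\widetilde H^{s-\frac12}(\Omega)$ (valid, since $s-\frac12\in(-\frac12,\frac12)$). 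A small bonus of your treatment: handling $|{\bf v}|$ purely as an $L^q$ factor via H\"older and Sobolev embedding sidesteps the paper's chain $\widetilde H^0(\Omega)\cdot H^{s+\frac12}(\Omega)\hookrightarrow\widetilde H^{s-\frac12}(\Omega)\cdot H^{s+\frac12}(\Omega)$, whose first inclusion as written is only valid for $s\le\frac12$, so your argument for the Darcy term is actually cleaner across the full range $s\in(0,1)$.
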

\begin{proof}
{Let $s\in (0,1)$. The following pointwise multiplication result for Sobolev spaces holds
\begin{align}
\label{t}
H^{s+\frac{1}{2}}(\Omega )\cdot H^{\frac{3}{2}-s}(\Omega )\hookrightarrow H^{t}(\Omega )
\end{align}
for any $t\in [0,1/2)$,
in the sense that if $f\in H^{s+\frac{1}{2}}(\Omega )$ and $h\in H^{\frac{3}{2}-s}(\Omega )$, then
$fh\in H^{t}(\Omega )$, and there is a constant $c\equiv c(\Omega ,s)>0$ such that
$$\|fh\|_{H^{t}(\Omega )}\leq c\|f\|_{H^{s+\frac{1}{2}}(\Omega )}\|h\|_{H^{\frac{3}{2}-s}(\Omega )}$$
(cf., e.g., {\cite[Section 4.4]{Runst-Sickel}}, \cite[Theorem 7.3]{Be}, \cite[Lemma 28]{Holst} and \cite[(7.29)]{D-M}).

Since for any $s\in (0,1)$ there exists $t\in [0,1/2)$ such that $1/2-s<t$, and thus $H^{t}(\Omega )\hookrightarrow H^{\frac{1}{2}-s}(\Omega )$, \eqref{t} implies that
\begin{align}
\label{inclusion-NS}
H^{s+\frac{1}{2}}(\Omega )\cdot H^{\frac{3}{2}-s}(\Omega )\hookrightarrow H^{\frac{1}{2}-s}(\Omega )
\end{align}
and there exists a constant $c\equiv c(\Omega ,s)>0$ such that $$\|fh\|_{H^{\frac{1}{2}-s}(\Omega )}\leq c\|f\|_{H^{s+\frac{1}{2}}(\Omega )}\|h\|_{H^{\frac{3}{2}-s}(\Omega )}.$$

By} \cite[Theorem 8.1]{Be} and \cite[Lemma 28]{Holst}), we obtain that the pointwise multiplication of functions extends uniquely to a continuous bilinear map
\begin{align}
\label{inclusion-NS-1}
H^{s+\frac{1}{2}}(\Omega )\cdot H^{s-\frac{1}{2}}(\Omega )\hookrightarrow {H}^{s-\frac{3}{2}}(\Omega ).
\end{align}

In addition, we {prove} the inclusion
\begin{align}
\label{inclusion-NS-2}
H^{s+\frac{1}{2}}(\Omega)\cdot \widetilde{H}^{s-\frac{1}{2}}(\Omega )\hookrightarrow \widetilde{H}^{s-\frac{3}{2}}(\Omega ),
\end{align}
i.e., ${(E^{s+\frac{1}{2}}u)}\widetilde{v}\in \widetilde{H}^{s-\frac{3}{2}}(\Omega )$ for all ${u}\in H^{s+\frac{1}{2}}(\Omega)$ and $\widetilde{v}\in \widetilde{H}^{s-\frac{1}{2}}(\Omega )$,
{and} there is a constant $\tilde{c}=\tilde{c}(\Omega ,s)>0$, such that
\begin{align}
\label{Newtonian-D-B-F-Robin2-SM}
\|{(E^{s+\frac{1}{2}}u)}\widetilde{v}\|_{\widetilde{H}^{s-\frac{3}{2}}(\Omega )}\leq
\tilde{c}\|{u}\|_{H^{s+\frac{1}{2}}(\Omega )}\|\widetilde{v}\|_{\widetilde{H}^{s-\frac{1}{2}}(\Omega )}.
\end{align}
Let us first prove \eqref{Newtonian-D-B-F-Robin2-SM} in Euclidean setting, by using {the arguments similar to those in} \cite[Theorem 8.1]{Be}.
{We}
first prove \eqref{Newtonian-D-B-F-Robin2-SM} for $\widetilde{v}\in \widetilde{H}^{s-\frac{1}{2}}(\Omega )$ and $u\in {\mathcal D}(\mathbb R^n)$.
{We have,}
\begin{align}
\label{conv-3}
\|u\widetilde{v}\|_{\widetilde{H}^{s-\frac{3}{2}}(\Omega )}=
\sup _{\Psi \in \mathcal D(\overline{\Omega})}\frac{\big|\langle u\widetilde{v},\Psi \rangle _{\widetilde{H}^{s-\frac{3}{2}}(\Omega )\times H^{\frac{3}{2}-s}(\Omega )}\big|}{\|\Psi \|_{H^{\frac{3}{2}-s}(\Omega )}}.
\end{align}

By using the pointwise multiplication result \eqref{inclusion-NS} and that\\
{${r_\Omega}u\in {\mathcal D}(\overline{\Omega})\subset H^{s+\frac{1}{2}}(\Omega )$}, we obtain
\begin{align*}
\big|\langle u\widetilde{v},\Psi \rangle _{\widetilde{H}^{s-\frac{3}{2}}(\Omega )\times H^{\frac{3}{2}-s}(\Omega )}\big|&=\big|\langle \widetilde{v},u\Psi \rangle _{\widetilde{H}^{s-\frac{1}{2}}(\Omega )\times H^{\frac{1}{2}-s}(\Omega )}\big|\nonumber\\
&\leq\|\widetilde{v}\|_{\widetilde{H}^{s-\frac{1}{2}}(\Omega )}\|u\Psi \|_{H^{\frac{1}{2}-s}(\Omega )}\nonumber\\
&\leq {\tilde c}\|\widetilde{v}\|_{\widetilde{H}^{s-\frac{1}{2}}(\Omega )}\|u\|_{H^{s+\frac{1}{2}}(\Omega )}\|\Psi \|_{H^{\frac{3}{2}-s}(\Omega )}.
\end{align*}
{Hence,
\begin{align}
\label{5.14}
\|u\widetilde{v}\|_{\widetilde{H}^{s-\frac{3}{2}}(\Omega )}\leq
\tilde c\|u\|_{H^{s+\frac{1}{2}}(\Omega )}\|\widetilde{v}\|_{\widetilde{H}^{s-\frac{1}{2}}(\Omega )}.
\end{align}
i.e.,} inequality \eqref{Newtonian-D-B-F-Robin2-SM} holds for $\widetilde{v}\in \widetilde{H}^{s-\frac{1}{2}}(\Omega )$ and $u\in {\mathcal D}({\mathbb R}^n)$.

Let us now prove \eqref{Newtonian-D-B-F-Robin2-SM} for {$\widetilde{v}\in \widetilde{H}^{s-\frac{1}{2}}(\Omega )$ and $u\in H^{s+\frac{1}{2}}(\Omega )$}. To this end, note that ${\mathcal D}(\mathbb R^n)$ is dense in $H^{s+\frac{1}{2}}(\mathbb R^n)$, and hence there exists a sequence
$\{\varphi _k\}_{k\geq 1}\subset \mathcal D(\mathbb R^n)$ such that
\begin{align}
\label{conv}
\lim _{k\to \infty }\varphi _k=E^{s+\frac{1}{2}}u \mbox{ in } H^{s+\frac{1}{2}}(\mathbb R^n)
{\quad
\Longrightarrow\quad}
\lim _{k\to \infty }r_\Omega\varphi _k=u \mbox{ in } H^{s+\frac{1}{2}}(\Omega ).
\end{align}
Moreover, {by \eqref{5.14}},
$\left\|{\varphi_j}\widetilde{v}-{\varphi _\ell} \widetilde{v}\right\| _{\widetilde{H}^{s-\frac{3}{2}}(\Omega)}\leq
\tilde c\|\varphi _j-\varphi _\ell \|_{H^{s+\frac{1}{2}}(\Omega )}\|\widetilde{v}\|_{\widetilde{H}^{s-\frac{1}{2}}(\Omega )},$
and by \eqref{conv} we deduce that $\{{\varphi _k}\widetilde{v}\}_{k\geq 1}$ is a Cauchy sequence in $\widetilde{H}^{s-\frac{3}{2}}(\Omega )$, i.e., it converges to an element ${\widetilde w}\in \widetilde{H}^{s-\frac{3}{2}}(\Omega )$. {It is easy to show that the limit is unique, i.e., does not depend on the chosen {sequence $\{\varphi _k\}_{k\geq 1}$}, and we define the product ${( E^{s+\frac{1}{2}}u)}\widetilde v:=\widetilde w$.}
{Hence}
\begin{align}
\label{conv4}
{E^{s+\frac{1}{2}}u\widetilde{v}:=\lim _{k\to \infty }{\varphi _k}\widetilde{v}}\ \mbox{ in } \ \widetilde{H}^{s-\frac{3}{2}}(\Omega ).
\end{align}
Then we obtain that
\begin{align}
\label{conv5}
&\|{(E^{s+\frac{1}{2}}u)}\widetilde{v}\|_{\widetilde{H}^{s-\frac{3}{2}}(\Omega )}\leq \|{\varphi _k}\widetilde{v}-{(E^{s+\frac{1}{2}}u)}\widetilde{v}\|_{\widetilde{H}^{s-\frac{3}{2}}(\Omega )}+
\|{\varphi _k}\widetilde{v}\|_{\widetilde{H}^{s-\frac{3}{2}}(\Omega )}\nonumber\\
&\leq \|{\varphi _k}\widetilde{v}-{ (E^{s+\frac{1}{2}}u)}\widetilde{v}\|_{\widetilde{H}^{s-\frac{3}{2}}(\Omega )}+\tilde{c}\|\varphi _k\|_{H^{s+\frac{1}{2}}(\Omega )}\|\widetilde{v}\|_{\widetilde{H}^{s-\frac{1}{2}}(\Omega )}\nonumber\\
&\leq \|{\varphi _k}\widetilde{v}-{ (E^{s+\frac{1}{2}}u)}\widetilde{v}\|_{\widetilde{H}^{s-\frac{3}{2}}(\Omega )}+\tilde{c}\|\varphi _k-u\|_{H^{s+\frac{1}{2}}(\Omega )}\|\widetilde{v}\|_{\widetilde{H}^{s-\frac{1}{2}}(\Omega )}\nonumber\\
&\hspace{5em}+\tilde{c}\|u\|_{\widetilde{H}^{s+\frac{1}{2}}(\Omega )}\|\widetilde{v}\|_{\widetilde{H}^{s-\frac{1}{2}}(\Omega )}.
\end{align}
Finally, by \eqref{conv}
{and} \eqref{conv5}, {we} obtain inequality \eqref{Newtonian-D-B-F-Robin2-SM}, as desired. {Such an inequality is still valid when $\Omega $ is a Lipschitz domain in the compact Riemannian manifold $M$, as follows from the definition of the involved Sobolev spaces on $M$ (see Section 2.1) and the version of \eqref{Newtonian-D-B-F-Robin2-SM} in the Euclidean setting (see also \cite[Lemma 28]{Holst})}.

Further, by \eqref{Newtonian-D-B-F-Robin2-SM} and the local representation formula \eqref{locally} of $\nabla _{{\bf u}}{\bf v}$, there exists a constant $c_{(0)}^*\equiv c_{(0)}^*(\Omega ,s)>0$ such that
\begin{align}
\label{inclusion-NS-3}
\left\|{\left({\widetilde E}^{s-\frac{1}{2}}\partial_ju^\ell\right)E^{s+\frac{1}{2}}v^j}\right\| _{\widetilde{H}^{s-\frac{3}{2}}(\Omega )}
&\leq c_{(0)}^*\|{\bf u}\|_{H^{s+\frac{1}{2}}(\Omega ,\Lambda ^1TM)}\|{\bf v}\|_{H^{s+\frac{1}{2}}(\Omega ,\Lambda ^1TM)},\nonumber\\
&\hspace{3em}\forall \ {\bf u}, {\bf v}\in H^{s+\frac{1}{2}}(\Omega ,\Lambda ^1TM).
\end{align}
In addition, by using again \eqref{inclusion-NS-2} we obtain that
${\widetilde H^0}(\Omega )\cdot H^{s+\frac{1}{2}}(\Omega )\hookrightarrow \widetilde{H}^{s-\frac{1}{2}}(\Omega )\cdot H^{s+\frac{1}{2}}(\Omega )\hookrightarrow \widetilde{H}^{s-\frac{3}{2}}(\Omega ).$
Hence
\begin{align}
\label{inclusion-NS-3-new}
{({\widetilde E^0}|{\bf u}|)}{E^{s+\frac{1}{2}}}{\bf v}\in \widetilde{H}^{s-\frac{3}{2}}(\Omega ,\Lambda ^1TM),\ \forall \ {\bf u}, {\bf v}\in H^{s+\frac{1}{2}}(\Omega ,\Lambda ^1TM)
\end{align}
and there exists a constant $c_0^*\equiv c_0^*(\Omega ,s)>0$ such that
\begin{align}
\label{inclusion-NS-3-new}
\left\|{({\widetilde E^0}|{\bf u}|)}{E^{s+\frac{1}{2}}}{\bf v}\right\|_{\widetilde{H}^{s-\frac{3}{2}}(\Omega )}&\leq c_0^*\|{\bf u}\|_{H^{s+\frac{1}{2}}(\Omega ,\Lambda ^1TM)}\|{\bf v}\|_{H^{s+\frac{1}{2}}(\Omega ,\Lambda ^1TM)},\nonumber\\
&\hspace{3em}\forall \ {\bf u}, {\bf v}\in H^{s+\frac{1}{2}}(\Omega ,\Lambda ^1TM).
\end{align}
Thus, the nonlinear operator ${\widetilde{\mathcal I}}_{k;\beta ;{\Omega }}:H_{\delta }^{s+\frac{1}{2}}({\Omega },\Lambda ^1TM)\to \widetilde{H}^{s-\frac{3}{2}}(\Omega ,\Lambda ^1TM)$ given by \eqref{nonlin-2} is
positively homogeneous of order 2, and bounded in the sense of \eqref{NS-s2}, with the constant
\begin{align}
\label{const-s}
C_1={|k|}c_{(0)}^*+{|\beta |}c_0^*.
\end{align}
Moreover, inequalities \eqref{inclusion-NS-3} and \eqref{inclusion-NS-3-new} imply that
\begin{align}
&\|{\widetilde{\mathcal I}}_{k;\beta ;{\Omega }}({\bf v})
-{\widetilde{\mathcal I}}_{k;\beta ;{\Omega }}({\bf w})\| _{\widetilde{H}^{s-\frac{3}{2}}(\Omega,\Lambda^1TM)}\nonumber\\
&\leq {|k|}\left\|({\widetilde E^0}|{\bf v}|){E^{s+\frac{1}{2}}}{\bf v}-({\widetilde E^0}|{\bf w}|){ E^{s+\frac{1}{2}}}{\bf w}\right\|_{\widetilde{H}^{s-\frac{3}{2}}(\Omega ,\Lambda ^1TM)}\nonumber
\\
&\quad +{|\beta |}\left\|({\widetilde E}^{s-\frac{1}{2}}\nabla _{{\bf v}}){ E^{s+\frac{1}{2}}}{\bf v}
-{({\widetilde E}^{s-\frac{1}{2}}\nabla _{{\bf w}}) E^{s+\frac{1}{2}}}{\bf w}\right\| _{\widetilde{H}^{s-\frac{3}{2}}(\Omega ,\Lambda ^1TM)}\nonumber
\\
&\leq {|k|}\left\|({\widetilde E^0}(|{\bf v}|-|{\bf w}|)){E^{s+\frac{1}{2}}}{\bf v}
+{(\widetilde E^0|{\bf w}|)}{E^{s+\frac{1}{2}}}({\bf v}-{\bf w})\right\| _{\widetilde{H}^{s-\frac{3}{2}}(\Omega ,\Lambda ^1TM)}\nonumber
\\
&\quad +{|\beta |}\left\|{({\widetilde E}^{s-\frac{1}{2}}\nabla _{{\bf v}-{\bf w}}) E^{s+\frac{1}{2}}}{\bf v}+{({\widetilde E}^{s-\frac{1}{2}}\nabla _{\bf w}) E^{s+\frac{1}{2}}}({\bf v}
-{\bf w})\right\|_{\widetilde{H}^{s-\frac{3}{2}}(\Omega ,\Lambda ^1TM)}\nonumber
\end{align}
\begin{align}
&\leq ({|k|}c+{|\beta |}C_0)\|{\bf v}-{\bf w}\|_{H^{s+\frac{1}{2}}(\Omega ,\Lambda ^1TM)}\left(\|{\bf v}\|_{H^{s+\frac{1}{2}}(\Omega ,\Lambda ^1TM)}+\|{\bf w}\|_{H^{s+\frac{1}{2}}(\Omega )}\right)\nonumber\\
&=C_1\|{\bf v}-{\bf w}\|_{H^1(\Omega ,\Lambda ^1TM)}\left(\|{\bf v}\|_{H^{s+\frac{1}{2}}(\Omega ,\Lambda ^1TM)}+\|{\bf w}\|_{H^{s+\frac{1}{2}}(\Omega ,\Lambda ^1TM)}\right),\nonumber
\end{align}
giving inequality \eqref{NS-s3}. Then the continuity of the operator \eqref{NS-s} is a consequence of \eqref{NS-s3}.

{To prove that the operator $\widetilde{\mathcal I}_{k;\beta ;{\Omega }}$ does not depend on the chosen extension operator $E^{s+\frac{1}{2}}$ in definition \eqref{nonlin-2}, let us consider the operators
$\widetilde{\mathcal I}^1_{k;\beta ;{\Omega }}$ and $\widetilde{\mathcal I}^2_{k;\beta ;{\Omega }}$ obtained using the extensions $E_1^{s+\frac{1}{2}}$ and $E_2^{s+\frac{1}{2}}$, respectively. Then
\begin{align*}
&\|\widetilde{\mathcal I}^1_{k;\beta ;{\Omega }}({\bf v})-\widetilde{\mathcal I}^2_{k;\beta ;{\Omega }}({\bf v})\| _{\widetilde{H}^{s-\frac{3}{2}}(\Omega,\Lambda^1TM)}\nonumber
\\
&\leq {|k|}\left\|(\widetilde E^0|{\bf v}|)E_1^{s+\frac{1}{2}}{\bf v}-(\widetilde E^0|{\bf v}|)E_2^{s+\frac{1}{2}}{\bf v}\right\|_{\widetilde{H}^{s-\frac{3}{2}}(\Omega ,\Lambda ^1TM)}\nonumber
\\
&\hspace{2em}+{|\beta |}\left\|({\widetilde E}^{s-\frac{1}{2}}\nabla _{{\bf v}})E_1^{s+\frac{1}{2}}{\bf v}\!-\!({\widetilde E}^{s-\frac{1}{2}}\nabla _{{\bf v}})E_2^{s+\frac{1}{2}}{\bf v}\right\| _{\widetilde{H}^{s-\frac{3}{2}}(\Omega,\Lambda ^1TM)}\nonumber
\\
&={|k|}\left\|(\widetilde E^0|{\bf v}|)(E_1^{s+\frac{1}{2}}{\bf v}-E_2^{s+\frac{1}{2}}{\bf v})\right\| _{\widetilde{H}^{s-\frac{3}{2}}(\Omega ,\Lambda ^1TM)}\nonumber\\
&\hspace{2em}+{|\beta |}\left\|({\widetilde E}^{s-\frac{1}{2}}\nabla _{\bf w})(E_1^{s+\frac{1}{2}}{\bf v}-E_2^{s+\frac{1}{2}}{\bf v})\right\|_{\widetilde{H}^{s-\frac{3}{2}}(\Omega ,\Lambda ^1TM)}\nonumber
\\
&\leq ({|k|}c+{|\beta |}C_0)\|{\bf v}\|_{H^{s+\frac{1}{2}}(\Omega ,\Lambda ^1TM)}
\left\|{r_{\Omega }\left(E_1^{s+\frac{1}{2}}{\bf v}\!-\!E_2^{s+\frac{1}{2}}{\bf v}\right)}\right\|_{H^{s+\frac{1}{2}}(\Omega ,\Lambda ^1TM)}\!\!=\!0
\end{align*}
since ${r_{\Omega }\left(E_1^{s+\frac{1}{2}}{\bf v}-E_2^{s+\frac{1}{2}}{\bf v}\right)}=0$.}
\end{proof}

\subsection{Existence and uniqueness for {nonlinear} transmission problem \eqref{Poisson-transmission-zero-order1}}

Next we use the {notation
$\widetilde{\mathcal I}_{{\Omega }_{-}}({\bf v}):={\widetilde{\mathcal I}}_{0;1;{\Omega }_{-}}({\bf v})$
}
and show the main result of {Section \ref{nonlin}}, i.e., the existence and uniqueness of {solution} in the space ${\mathcal X}_{s}$ for the nonlinear problem \eqref{Poisson-transmission-zero-order1} when the given data belong to the space ${\mathcal Y}_{s}$, $s\in (0,1)$ (see \eqref{X1-sm} and \eqref{Y1-sm}).
\begin{thm}
\label{Poisson-transmission-NS-DFB-0} {Let $M$ satisfy {Assumption} $\ref{H}$ and $\dim (M)\in \{2,3\}$}. Let $\Omega _{+}:=\Omega \subset M$ be a Lipschitz domain. {Let $\Omega _{-}:=M\setminus \overline{\Omega }$ satisfy Assumption $\ref{connected-set}$}. Let $s\in (0,1)$ and ${\mu >0}$, $k,\beta \in {\mathbb R}$ be given nonzero constants. Let {$V\in L^{\infty }(M,\Lambda^1TM\otimes \Lambda ^1TM)$} be a {symmetric} tensor field satisfying the positivity condition \eqref{V-positive} and ${\mathcal P}\in L^{\infty }(\partial \Omega ,\Lambda ^1TM\otimes \Lambda ^1TM)$ be a symmetric tensor field which satisfies the positivity condition \eqref{Poisson-transmission3}. Then there exist two constants {$\zeta >0$ and $\eta >0$} depending only on $\Omega _{\pm}$, ${\mathcal P}$, $V$, $s$, $k$, $\beta $, $\mu $, with the property that for all data $(\tilde{\bf f}_{+},\tilde{\bf f}_{-},{\bf h},{\bf r})\in {\mathcal Y}_{s}$, which satisfy the condition
\begin{equation}
\label{cond-small}
\big\|(\tilde{\bf f}_{+},\tilde{\bf f}_{-},{\bf h},{\bf r})\big\|_{{\mathcal Y}_{s}}
\leq {\zeta },
\end{equation}
the transmission problem for the generalized Darcy-Forchheimer-Brinkman and Navier-Stokes systems
\eqref{Poisson-transmission-zero-order1} has a unique solution $\left(({\bf u}_{+},\pi _{+}),({\bf u}_{-},\pi _{-})\right)\in {\mathcal X}_{s}$, such that
\begin{equation}
\label{estimate1-zero-order}
\|\left({\bf u}_{+}{\bf u}_{-}\right)\|_{H_{\delta }^{s+\frac{1}{2}}({\Omega }_{+},\Lambda ^1TM)\times H_{\delta }^{s+\frac{1}{2}}({\Omega }_{-},\Lambda ^1TM)}\leq {\eta }.
\end{equation}
Moreover, the solution depends continuously on the given data and satisfies the estimate
\begin{align}
\label{estimate-D-B-F-new1-new1-D-Rn2}
\|\left(({\bf u}_{+},\pi _{+}),({\bf u}_{-},\pi _{-})\right)\|_{{\mathcal X}_{s}}\leq
C\big\|(\tilde{\bf f}_{+},\tilde{\bf f}_{-},{\bf h},{\bf r})\big\|_{{\mathcal Y}_{s}}
\end{align}
for some positive constant $C$ which depends only on ${\Omega }_{+}$, ${\Omega }_{-}$, ${\mathcal P}$, $V$, $s$, $\mu $.
\end{thm}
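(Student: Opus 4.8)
The plan is to recast the nonlinear transmission problem \eqref{Poisson-transmission-zero-order1} as a fixed-point equation driven by the linear solution operator $\mathcal{S}_V$ of Theorem \ref{Poisson-transmission-Stokes}, and then to invoke the Banach contraction principle on a small closed ball. Comparing \eqref{Poisson-transmission-zero-order1} with the linear problem \eqref{Poisson-transmission4} and using Remark \ref{R2.4}, a tuple $\left(({\bf u}_{+},\pi _{+}),({\bf u}_{-},\pi _{-})\right)\in {\mathcal X}_{s}$ solves \eqref{Poisson-transmission-zero-order1} if and only if it is a fixed point of the map ${\mathcal T}:{\mathcal X}_{s}\to {\mathcal X}_{s}$,
\[
{\mathcal T}\big(({\bf u}_{+},\pi _{+}),({\bf u}_{-},\pi _{-})\big):={\mathcal S}_V\Big(\tilde{\bf f}_{+}-\widetilde{\mathcal I}_{k;\beta ;{\Omega }_+}{\bf u}_+,\ \tilde{\bf f}_{-}-\widetilde{\mathcal I}_{{\Omega }_{-}}{\bf u}_-,\ {\bf h},\ {\bf r}\Big).
\]
Lemma \ref{embeddings} guarantees that $\widetilde{\mathcal I}_{k;\beta ;{\Omega }_+}{\bf u}_+$ and $\widetilde{\mathcal I}_{{\Omega }_{-}}{\bf u}_-$ lie in $\widetilde{H}^{s-\frac{3}{2}}({\Omega }_{\pm },\Lambda ^1TM)$, so the argument of ${\mathcal S}_V$ indeed belongs to ${\mathcal Y}_{s}$. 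Since ${\mathcal T}$ depends on its argument only through the velocity components, I would realize it as a self-map of the complete velocity space ${\mathcal V}_s:=H_{\delta }^{s+\frac{1}{2}}({\Omega }_{+},\Lambda ^1TM)\times H_{\delta }^{s+\frac{1}{2}}({\Omega }_{-},\Lambda ^1TM)$ by composing with the projection onto velocities, solving the fixed-point problem there, and reconstructing the full tuple (pressures included) by one final application of ${\mathcal S}_V$.

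The two quantitative ingredients are the bound $\|{\mathcal S}_V\|\le C_V$ from Theorem \ref{Poisson-transmission-Stokes} and the quadratic and Lipschitz-type estimates \eqref{NS-s2}, \eqref{NS-s3} for $\widetilde{\mathcal I}$ from Lemma \ref{embeddings}. Writing ${\bf u}=({\bf u}_+,{\bf u}_-)$ and letting $C_1'$ collect the constants $C_1$ of the two subdomains, these combine to give, for $\|{\bf u}\|_{{\mathcal V}_s}\le \eta$,
\[
\|{\mathcal T}({\bf u})\|_{{\mathcal X}_{s}}\le C_V\big\|(\tilde{\bf f}_{+},\tilde{\bf f}_{-},{\bf h},{\bf r})\big\|_{{\mathcal Y}_{s}}+C_VC_1'\eta^2,
\]
and, for $\|{\bf u}\|_{{\mathcal V}_s},\|{\bf w}\|_{{\mathcal V}_s}\le \eta$, using the linearity of ${\mathcal S}_V$ to cancel the ${\bf h},{\bf r}$ contributions,
\[
\|{\mathcal T}({\bf u})-{\mathcal T}({\bf w})\|_{{\mathcal X}_{s}}\le 2C_VC_1'\eta\,\|{\bf u}-{\bf w}\|_{{\mathcal V}_s}.
\]
I would then fix $\eta>0$ with $C_VC_1'\eta\le \tfrac14$, so that the contraction constant $2C_VC_1'\eta\le\tfrac12<1$ and the quadratic term obeys $C_VC_1'\eta^2\le \tfrac{\eta}{4}$, and afterwards choose $\zeta>0$ with $C_V\zeta\le \tfrac34\eta$. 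Under the smallness hypothesis \eqref{cond-small}, these choices make ${\mathcal T}$ map the closed ball $\{\|{\bf u}\|_{{\mathcal V}_s}\le\eta\}$ into itself and act there as a contraction, so the Banach fixed-point theorem yields a unique fixed point, hence the unique solution of \eqref{Poisson-transmission-zero-order1} satisfying \eqref{estimate1-zero-order}.

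For the a priori bound \eqref{estimate-D-B-F-new1-new1-D-Rn2}, I would feed the fixed point ${\bf X}$ back into $\|{\bf X}\|_{{\mathcal X}_{s}}\le C_V\|(\tilde{\bf f}_{+},\tilde{\bf f}_{-},{\bf h},{\bf r})\|_{{\mathcal Y}_{s}}+C_VC_1'\eta\|{\bf X}\|_{{\mathcal X}_{s}}$ and absorb the last term using $C_VC_1'\eta\le\tfrac14$, obtaining $\|{\bf X}\|_{{\mathcal X}_{s}}\le C\|(\tilde{\bf f}_{+},\tilde{\bf f}_{-},{\bf h},{\bf r})\|_{{\mathcal Y}_{s}}$ with $C=\tfrac{4}{3}C_V$; continuous dependence on the data follows by applying the same contraction estimate to the difference of the fixed points associated with two admissible data sets. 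The routine parts here are the constant bookkeeping and checking that each intermediate object lives in the asserted space. The one genuinely delicate point, namely the mapping property $\widetilde{\mathcal I}:H^{s+\frac12}\to\widetilde{H}^{s-\frac32}$ together with its quadratic and Lipschitz bounds, is precisely where the restriction $\dim(M)\in\{2,3\}$ enters (through the Sobolev pointwise-multiplication embeddings underlying Lemma \ref{embeddings}); since that obstacle has already been isolated and resolved in Lemma \ref{embeddings}, the remainder of the argument is the standard contraction scheme described above.
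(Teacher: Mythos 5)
Your proposal is correct and takes essentially the same route as the paper's own proof: both recast \eqref{Poisson-transmission-zero-order1} as a fixed-point equation for the velocity pair through the linear solution operator ${\mathcal S}_V$ of Theorem \ref{Poisson-transmission-Stokes}, invoke the quadratic and Lipschitz bounds of Lemma \ref{embeddings}, and run the Banach contraction principle on the ball $\mathbf{B}_\eta$ with the identical parameter choices ($C_*\tilde C_1\eta=\tfrac14$, $C_*\zeta=\tfrac34\eta$, contraction constant $\tfrac12$, and $C=\tfrac43 C_*$ in \eqref{estimate-D-B-F-new1-new1-D-Rn2}). Your uniqueness argument (any solution with velocities in the ball is a fixed point of the contraction, by the linear well-posedness) and your continuous-dependence step also mirror the paper's, which appeals to the same mechanism via the Kantorovich--Akilov fixed-point stability result.
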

\begin{proof}
We rewrite the nonlinear transmission problem \eqref{Poisson-transmission-zero-order1} in the form
\begin{equation}
\label{Poisson-Brinkman-smn-1}
\!\!\!\left\{\begin{array}{lll}
{\bf L}{{\bf u}_+}+V{{\bf u}_+}+d{\pi_+}=
\tilde{\bf f}_{+}|_{\Omega _{+}}-{\widetilde{\mathcal I}}_{k;\beta ;\Omega_+}({\bf u}_+)|_{\Omega_+},\
{\delta {\bf u}_+=0 \mbox{ in } {\Omega }_{+},}\\
{\bf L}{{\bf u}_-}+d{\pi _-}=\tilde{\bf f}_{-}|_{\Omega _{-}}-{\widetilde{\mathcal I}}_{\Omega_{-}}({\bf u}_{-})|_{\Omega_{-}},\
{\delta {\bf u}_{-}=0} \mbox{ in }\ {\Omega }_{-},\\
{\mu }\gamma_{+}{{\bf u}_+}-\gamma_{-}{\bf u}_{-}={\bf h} \mbox{ on } \partial \Omega ,\\
{\bf t}_{V}^{+}\left({{\bf u}_+},{\pi_+};\tilde{\bf f}_{+}-{\widetilde{\mathcal I}}_{k;\beta ;{\Omega }_{+}}({\bf u}_+)\right)
-{{\bf t}_0^-}\left({{\bf u}_-},{\pi_-};\tilde{\bf f}_{-}-{\widetilde{\mathcal I}}_{{\Omega }_{-}}({\bf u}_{-})\right)\\
\hspace{7em}+{\mathcal P} \gamma_{+}{\bf u}_{+}={\bf r} \mbox{ on } \partial \Omega .
\end{array}\right.
\end{equation}
Next we construct a nonlinear operator $\left({\mathcal T}_{+},{\mathcal T}_{-}\right)$ mapping a closed ball ${\bf B}_{\eta}$ of the product space $H_{\delta }^{s+\frac{1}{2}}({\Omega }_{+},\Lambda ^1TM)\times H_{\delta }^{s+\frac{1}{2}}({\Omega }_{-},\Lambda ^1TM)$ to ${\bf B}_{\eta}$ and being a contraction on ${\bf B}_{\eta}$. Then the unique fixed point of $\left({\mathcal T}_{+},{\mathcal T}_{-}\right)$ will determine a solution of the nonlinear problem \eqref{Poisson-Brinkman-smn-1}, and hence of the problem \eqref{Poisson-transmission-zero-order1}.

$\bullet $ {\bf The nonlinear operator $\left({\mathcal T}_{+},{\mathcal T}_{-}\right)$ and the existence result}

For a fixed {couple} $(\mathbf u_+,{\bf u}_{-})\in H_{\delta }^{s+\frac{1}{2}}({\Omega }_{+},\Lambda ^1TM)\times H_{\delta }^{s+\frac{1}{2}}({\Omega }_{-},\Lambda ^1TM)$, we consider the linear transmission problem for the Stokes and Brinkman systems
\begin{equation}
\label{Newtonian-D-B-F-new2-D-Rn}
\!\!\!\left\{\begin{array}{lll}
{\bf L}{{\bf u}^0_+}{+V{{\bf u}^0_+}}+d{\pi^0_+}=
\tilde{\bf f}_{+}|_{\Omega _{+}}-{\widetilde{\mathcal I}}_{k;\beta ;\Omega_+}({\bf u}_+)|_{\Omega_+},\
{\delta {\bf u}^0_+=0 \mbox{ in } {\Omega }_{+},}\\
{\bf L}{{\bf u}^0_-}+d{\pi^0_-}=\tilde{\bf f}_{-}|_{\Omega _{-}}-{\widetilde{\mathcal I}}_{\Omega_{-}}({\bf u}_{-})|_{\Omega_{-}},\ {\delta {\bf u}^0_{-}=0} \mbox{ in } {\Omega }_{-},\\
{\mu }\gamma_{+}{{\bf u}^0_+}-\gamma_{-}{\bf u}^0_{-}
={\bf h} \mbox{ on } \partial \Omega ,\\
{{\bf t}_V^+}\left({{\bf u}^0_+},{\pi^0_+};
\tilde{\bf f}_{+}-{\widetilde{\mathcal I}}_{k;\beta ;{\Omega }_{+}}({\bf u}_+)\right)-
{{\bf t}_0^-}\left({{\bf u}^0_-},{\pi^0_-};\tilde{\bf f}_{-}-{\widetilde{\mathcal I}}_{{\Omega }_{-}}({\bf u}_{-})\right)\\
\hspace{7em}+{\mathcal P} \gamma_{+}{\bf u}^0_{+}={\bf r} \mbox{ on } \partial \Omega ,
\end{array}\right.
\end{equation}
{with unknowns $\left(({\bf u}^0_{+},\pi^0_{+}),({\bf u}^0_{-},\pi^0_{-})\right)$}.
By Lemma~\ref{embeddings}, we have ${\widetilde{\mathcal I}}_{k;\beta ;\Omega_+}({\bf u}_+)\in \widetilde{H}^{{s-\frac{3}{2}}}({\Omega }_{+},\Lambda ^1TM)$ and ${\widetilde{\mathcal I}}_{\Omega_{-}}({\bf u}_{-})\in \widetilde{H}^{{s-\frac{3}{2}}}({\Omega }_{-},\Lambda ^1TM)$, and then for given data $(\tilde{\bf f}_+,\tilde{\bf f}_{-},{\bf h},{\bf r})\in {\mathcal Y}_{s}$, the right hand side of (\ref{Newtonian-D-B-F-new2-D-Rn}) belongs to the space ${\mathcal Y}_{s}$ {defined in \eqref{Y1-sm}}. Hence, by Theorem \ref{Poisson-transmission-Stokes}, there exists a unique solution $\left(({{\bf u}^0_+},{\pi^0_+}),({{\bf u}^0_-},{\pi^0_-})\right)${ of problem (\ref{Newtonian-D-B-F-new2-D-Rn}) in the space  ${\mathcal X}_{s}$ defined in \eqref{X1-sm}}.
This solution can be written as
\begin{align}
\label{solution-v0}
&\!\!\left(({{\bf u}^0_+},{\pi^0_+}),({{\bf u}^0_-},{\pi^0_-})\right)\!\!=\!\!
\left(({\mathcal T}_{+}({\bf u}_+,{\bf u}_{-}),{P}_{+}({\bf u}_+,{\bf u}_{-})),({\mathcal T}_{-}({\bf u}_+,{\bf u}_{-}),{P}_{-}({\bf u}_+,{\bf u}_{-}))\right)\nonumber\\
&\hspace{5em}{:=}{\mathcal S}_V\left(\tilde{\bf f}_{+}-{\widetilde{\mathcal I}}_{k;\beta ;\Omega_+}({\bf u}_+),\, \tilde{\bf f}_{-}-{\widetilde{\mathcal I}}_{\Omega_{-}}({\bf u}_{-}),\, {\bf h},\, {\bf r}\right)\in {\mathcal X}_{s},
\end{align}
where ${\mathcal S}_V:{\mathcal Y}_{s}\to {\mathcal X}_{s}$ is the linear and continuous operator introduced in Theorem \ref{Poisson-transmission-Stokes}.
Moreover, the continuity of ${\mathcal S}_V$ and Lemma~\ref{embeddings} imply that there exists a constant $C_*\equiv C_*({\Omega }_{+},{\Omega }_{-},{\mathcal P},V,\mu )>0$ such that the operator
\begin{align}
\label{Newtonian-D-B-F-new3n}
&({\mathcal T}_+,P_+,{\mathcal T}_-,P_-):H_{\delta }^{{s+\frac{1}{2}}}({\Omega }_{+},\Lambda ^1TM)\times H_{\delta }^{{s+\frac{1}{2}}}({\Omega }_{-},\Lambda ^1TM)\to {\mathcal X}_{s},
\end{align}
{defined} by \eqref{solution-v0}, satisfies the inequalities
\begin{align}
\label{estimate-D-B-F-new1-new1-D-Rn}
&\left\|\left(({\mathcal T}_{+}({\bf u}_+,{\bf u}_{-}),{P}_{+}({\bf u}_+,{\bf u}_{-})),({\mathcal T}_{-}({\bf u}_+,{\bf u}_{-}),{P}_{-}({\bf u}_+,{\bf u}_{-}))\right)\right\|_{{\mathcal X}_{s}}
\end{align}
\begin{align}
&\leq C_*\left\|\left(\tilde{\bf f}_{+}-\widetilde{\mathcal I}_{k;\beta;\Omega_+}({\bf u}_+),
\tilde{\bf f}_{-}-\widetilde{\mathcal I}_{\Omega_{-}}({\bf u}_{-}),{\bf h},{\bf r}\right)\right\|_{{\mathcal Y}_{s}}\nonumber
\\
&\leq C_*\Big(\left\|\left(\tilde{\bf f}_{+},\tilde{\bf f}_{-},{\bf h},{\bf r}\right)\right\|_{\mathcal Y_{s;\delta }}+\|\widetilde{\mathcal I}_{k;\beta;\Omega_+}({\bf u}_+)\|_{\widetilde{H}^{{s-\frac{3}{2}}}({\Omega }_{+},\Lambda ^1TM)}\nonumber
\\
&\hspace{3em}+\|\widetilde{\mathcal I}_{\Omega_{-}}({\bf u}_{-})\|_{\widetilde{H}^{{s-\frac{3}{2}}}({\Omega }_{-},\Lambda ^1TM)}\Big)\nonumber\\
&\leq C_*\big\|\big(\tilde{\bf f}_{+},\tilde{\bf f}_{-},{\bf h},{\bf r}\big)\big\|_{{\mathcal Y}_{s}}+C_{*}\tilde{C}_1{\|({\bf u}_{+},{\bf u}_{-})\|_{H_\delta ^{{s+\frac{1}{2}}}({\Omega }_{+},\Lambda ^1TM)\times H_\delta ^{{s+\frac{1}{2}}}({\Omega }_{-},\Lambda ^1TM)}^2},\nonumber
\end{align}
for all $({\bf u}_{+},{\bf u}_{-})\in H_\delta ^{s+\frac{1}{2}}({\Omega }_{+},\Lambda ^1TM)\times H_\delta ^{s+\frac{1}{2}}({\Omega }_{-},\Lambda ^1TM)$, with a constant $\tilde{C}_1\equiv \tilde{C}_1({\Omega }_{+},\Omega _{-},{k,\beta},s)>0$ determined by the constant $C_1$ from Lemma~\ref{embeddings} {corresponding to operator ${\widetilde{\mathcal I}}_{k;\beta ;\Omega _{+}}$} and a similar one corresponding to operator ${\widetilde{\mathcal I}}_{\Omega _{-}}$. Thus, the nonlinear operator given by \eqref{Newtonian-D-B-F-new3n} is continuous and bounded in the sense of \eqref{estimate-D-B-F-new1-new1-D-Rn}.
Moreover, by \eqref{Newtonian-D-B-F-new2-D-Rn} and \eqref{solution-v0}, we deduce that
\begin{equation}
\label{Newtonian-D-B-F-new4n}
\left\{\begin{array}{lll}
{\bf L}{\mathcal T}_+({\bf u}_{+},{\bf u}_{-})+V{\mathcal T}_+({\bf u}_{+},{\bf u}_{-})+d P_+({\bf u}_{+},{\bf u}_{-})\\
\hspace{7em}=\tilde{\bf f}_{+}|_{\Omega _{+}}-{\widetilde{\mathcal I}}_{k;\beta ;\Omega_+}({\bf u}_+)|_{\Omega_+} \mbox{ in } {\Omega }_{+},\\
{\delta {\mathcal T}_+({\bf u}_{+},{\bf u}_{-})=0 \mbox{ in } {\Omega }_{+},}\\
{\bf L}{\mathcal T}_{-}({\bf u}_{+},{\bf u}_{-})+d P_-({\bf u}_{+},{\bf u}_{-})
=\tilde{\bf f}_{-}|_{\Omega _{-}}-{\widetilde{\mathcal I}}_{\Omega_{-}}({\bf u}_{-})|_{\Omega_{-}}
\mbox{ in } {\Omega }_{-},\\
{\delta {\mathcal T}_{-}({\bf u}_{+},{\bf u}_{-})=0 \mbox{ in } {\Omega }_{-},}\\
{\mu }\gamma_{+}\left({\mathcal T}_+({\bf u}_{+},{\bf u}_{-})\right)-\gamma_{-}\left({\mathcal T}_{-}({\bf u}_{+},{\bf u}_{-})\right)={\bf h}\in H_{\nu }^{s}(\partial\Omega ,\Lambda ^1TM),\\
{\bf t}_{V}^{+}\left({\mathcal T}_+({\bf u}_{+},{\bf u}_{-}),P_+({\bf u}_{+},{\bf u}_{-});\tilde{\bf f}_{+}-{\widetilde{\mathcal I}}_{k;\beta ;{\Omega }_{+}}({\bf u}_{+})\right)\\
\hspace{3em}-{{\bf t}_0^-}\left({\mathcal T}_{-}({\bf u}_{+},{\bf u}_{-}),
P_-({\bf u}_{+},{\bf u}_{-});\tilde{\bf f}_{-}-{\widetilde{\mathcal I}}_{\Omega_{-}}({\bf u}_{-})\right)\\
\qquad\qquad\qquad +{\mathcal P}\gamma_{+}\left({\mathcal T}_+({\bf u}_{+},{\bf u}_{-})\right)
={\bf r}\in H^{s-1}(\partial\Omega ,\Lambda ^1TM).
\end{array}\right.
\end{equation}
Hence, if we can show that the nonlinear operator
\begin{align}
\label{nonlin-oper-sm}
\left({\mathcal T}_{+},{\mathcal T}_{-}\right):&H_{\delta }^{s+\frac{1}{2}}({\Omega }_{+},\Lambda ^1TM)\times H_{\delta }^{s+\frac{1}{2}}({\Omega }_{-},\Lambda ^1TM)\nonumber\\
&\to H_{\delta }^{s+\frac{1}{2}}({\Omega }_{+},\Lambda ^1TM)\times H_{\delta }^{s+\frac{1}{2}}({\Omega }_{-},\Lambda ^1TM)
 \end{align}
has a fixed point $({\bf u}_+,{\bf u}_{-})\in H_{\delta}^{s+\frac{1}{2}}({\Omega }_{+},\Lambda ^1TM)\times H_{\delta}^{s+\frac{1}{2}}({\Omega }_{-},\Lambda ^1TM)$, i.e.,
${{\mathcal T}_{+}({\bf u}_{+},{\bf u}_{-})={\bf u}_{+}},\ \ {{\mathcal T}_{-}({\bf u}_{+},{\bf u}_{-})={\bf u}_{-}},$
then {this} ${\bf u}_{\pm }$ and the pressure functions $\pi_\pm =P_\pm({\bf u}_+,{\bf u}_{-})$ will provide a solution of {nonlinear} transmission problem (\ref{Poisson-transmission-zero-order1}) in the space ${\mathcal X}_{s}$.
{In} order to show the desired result we consider the constants $\zeta $ and $\eta $ given by
\begin{align}
\label{Newtonian-D-B-F-new9n}
&{\zeta }=\frac{3}{16\tilde{C}_1C_*^2}>0,\ \ {\eta }=\frac{1}{4\tilde{C}_1C_*}>0
\end{align}
(see also \cite[Lemma 29]{Choe-Kim}, \cite[(5.25)]{K-L-M-W}), and the closed ball
\begin{align}
\label{gamma-0n}
\mathbf{B}_{\eta }:=\Big\{&\left({\bf v}_{+},{\bf v}_{+}\right)\in H_{\delta }^{s+\frac{1}{2}}({\Omega }_{+},\Lambda ^1TM)\times H_{\delta }^{s+\frac{1}{2}}({\Omega }_{-},\Lambda ^1TM):\nonumber\\
&\|\left({\bf v}_{+},{\bf v}_{+}\right)\|_{H_{\delta }^{s+\frac{1}{2}}({\Omega }_{+},\Lambda ^1TM)\times H_{\delta }^{s+\frac{1}{2}}({\Omega }_{-},\Lambda ^1TM)}\leq \eta\Big\}.
\end{align}
In addition, we assume that the given data satisfy {condition \eqref{cond-small}}.
By using (\ref{estimate-D-B-F-new1-new1-D-Rn}), \eqref{Newtonian-D-B-F-new9n}, \eqref{gamma-0n} {and \eqref{cond-small}} we {obtain}
\begin{align}
\label{Newtonian-D-B-F-new9-new-crackn}
&\|\left({\mathcal T}_{+}({\bf v}_+,{\bf v}_{-}),{P}_{+}({\bf v}_+,{\bf v}_{-}),{\mathcal T}_{-}({\bf v}_+,{\bf v}_{-}),{P}_{-}({\bf v}_+,{\bf v}_{-})\right)\|_{{\mathcal X}_{s}}\leq {\frac{1}{4\tilde{C}_1C_*}}
= \eta ,
\end{align}
for all $\left({\bf v}_{+},{\bf v}_{+}\right)\in \mathbf{B}_{\eta }$, which {implies} that $$\left\|\left({\mathcal T}_{+},{\mathcal T}_{-}\right)({\bf u}_+,{\bf u}_{-})\right\|_{H_{\delta }^{s+\frac{1}{2}}({\Omega }_{+},\Lambda ^1TM)\times H_{\delta }^{s+\frac{1}{2}}({\Omega }_{-},\Lambda ^1TM)}\leq \eta,\ \forall \ \left({\bf u}_{+},{\bf u}_{+}\right)\in \mathbf{B}_{\eta },$$
and hence $\left({\mathcal T}_{+},{\mathcal T}_{-}\right)$ maps $\mathbf{B}_{\eta }$ to $\mathbf{B}_{\eta }$.

Now we prove that $\left({\mathcal T}_{+},{\mathcal T}_{-}\right)$ is a Lipschitz continuous mapping on the ball $\mathbf{B}_\eta$. To this {end}, we use \eqref{solution-v0} and Lemma~\ref{embeddings}, and obtain the inequalities
\begin{align}
\label{4.30n}
&\left\|\left({\mathcal T}_{+},{\mathcal T}_{-}\right)({\bf v}_+,{\bf v}_{-})-\left({\mathcal T}_{+},{\mathcal T}_{-}\right)({\bf w}_+,{\bf w}_{-})\right\|_{H_{\delta }^{s+\frac{1}{2}}({\Omega }_{+},\Lambda ^1TM)\times H_{\delta }^{s+\frac{1}{2}}({\Omega }_{-},\Lambda ^1TM)}\nonumber\\
&\leq C_*\Big(\|{{\widetilde{\mathcal I}}_{k;\beta ;{\Omega }_{+}}({\bf v}_{+})}-{{\widetilde{\mathcal I}}_{k;\beta ;{\Omega }_{+}}({\bf w}_{+})}\|_{\widetilde{H}^{s-\frac{3}{2}}({\Omega }_{+},\Lambda ^1TM)}\nonumber\\
&\hspace{3em}+\|{{\widetilde{\mathcal I}}_{{\Omega }_{-}}({\bf v}_{-})}-{{\widetilde{\mathcal I}}_{{\Omega }_{-}}({\bf w}_{-})}\|_{\widetilde{H}^{s-\frac{3}{2}}({\Omega }_{-},\Lambda ^1TM)}\Big)\nonumber\\
& \leq C_*\tilde{C}_1\Big(\big(\|{\bf v}_+\|_{H^{s+\frac{1}{2}}({\Omega }_{+},\Lambda ^1TM)}\!+\!\|{\bf w}_+\|_{H^{s+\frac{1}{2}}({\Omega }_{+},\Lambda ^1TM)}\big)\|{\bf v}_+\!-\!{\bf w}_+\|_{H^{s+\frac{1}{2}}({\Omega }_{+})}\nonumber\\
&\ +\big(\|{\bf v}_{-}\|_{H^{s+\frac{1}{2}}({\Omega }_{-},\Lambda ^1TM)}\!+\!\|{\bf w}_{-}\|_{H^{s+\frac{1}{2}}({\Omega }_{-},\Lambda ^1TM)}\big)\|{\bf v}_{-}-{\bf w}_{-}\|_{H^{s+\frac{1}{2}}({\Omega }_{-})}\Big)\nonumber\\
& \leq 2\eta C_*\tilde{C}_1\|({\bf v}_+,{\bf v}_{-})-({\bf w}_+,{\bf w}_{-})\|_{H_{\delta }^{s+\frac{1}{2}}({\Omega }_{+},\Lambda ^1TM)\times H_{\delta }^{s+\frac{1}{2}}({\Omega }_{-},\Lambda ^1TM)}\nonumber\\
&=\frac{1}{2}\|({\bf v}_+,{\bf v}_{-})-({\bf w}_+,{\bf w}_{-})\|_{H_{\delta }^{s+\frac{1}{2}}({\Omega }_{+},\Lambda ^1TM)\times H_{\delta }^{s+\frac{1}{2}}({\Omega }_{-},\Lambda ^1TM)},\nonumber\\
&\hspace{15em}\forall\ ({\bf v}_+,{\bf v}_{-}),({\bf w}_+,{\bf w}_{-})\in \mathbf{B}_\eta,
\end{align}
where $C_*$ and $\tilde{C}_1$ are the constants in \eqref{estimate-D-B-F-new1-new1-D-Rn} {and from Lemma \ref{embeddings}}.
Thus, $\left({\mathcal T}_{+},{\mathcal T}_{-}\right):\mathbf{B}_{\eta }\to \mathbf{B}_{\eta }$ is a contraction, as desired.
Then the Banach-Caccioppoli fixed point theorem implies that the map $\left({\mathcal T}_{+},{\mathcal T}_{-}\right):\mathbf{B}_{\eta }\to \mathbf{B}_{\eta }$ has a unique fixed point
$\left({\bf u}_{+},{\bf u}_{-}\right){=\left({\mathcal T}_{+},{\mathcal T}_{-}\right)\left({\bf u}_{+},{\bf u}_{-}\right)}\in \mathbf{B}_{\eta }$.
Hence, $\left(({\bf u}_{+},\pi _{+}),({\bf u}_{-},\pi _{-})\right)$, where $\pi_\pm =P_\pm\left({\bf u}_{+},{\bf u}_{-}\right)$ are the pressure functions given by (\ref{solution-v0}), is a solution of { nonlinear} problem (\ref{Poisson-transmission-zero-order1}) in the space ${\mathcal X}_{s}$.
Moreover, the second expression in \eqref{Newtonian-D-B-F-new9n} yields
\begin{align}
\label{estimate-ms}
&C_{*}\tilde{C}_1\left\|\left({\bf u}_{+},{\bf u}_{-}\right)\right\|_{H_{\delta}^{s+\frac{1}{2}}({\Omega }_{+},\Lambda ^1TM)\times H_{\delta}^{s+\frac{1}{2}}({\Omega }_{-},\Lambda ^1TM)}^2\nonumber\\
&\le C_{*}\tilde{C}_1\eta\left\|\left({\bf u}_{+},{\bf u}_{-}\right)\right\|_{H_{\delta}^{s+\frac{1}{2}}({\Omega }_{+},\Lambda ^1TM)\times H_{\delta}^{s+\frac{1}{2}}({\Omega }_{-},\Lambda ^1TM)}\nonumber\\
&=\frac{1}{4}\left\|\left({\bf u}_{+},{\bf u}_{-}\right)\right\|_{H_{\delta}^{s+\frac{1}{2}}({\Omega }_{+},\Lambda ^1TM)\times H_{\delta}^{s+\frac{1}{2}}({\Omega }_{-},\Lambda ^1TM)}.
\end{align}
Finally, by inequalities \eqref{estimate-D-B-F-new1-new1-D-Rn} and \eqref{estimate-ms} we obtain
\begin{align*}
&{\left\|\left(({\bf u}_{+},\pi _{+}),({\bf u}_{-},\pi _{-})\right)\right\|_{{\mathcal X}_{s}}}\nonumber\\
&\leq C_*\big\|\big(\tilde{\bf f}_{+},\tilde{\bf f}_{-},{\bf h},{\bf r}\big)\big\|_{{\mathcal Y}_{s}}+\frac{1}{4}\|({\bf u}_{+},{\bf u}_{-})\|_{H^{s+\frac{1}{2}}({\Omega }_{+},\Lambda ^1TM)\times H^{s+\frac{1}{2}}({\Omega }_{-},\Lambda ^1TM)},
\end{align*}
and hence
$$\|({\bf u}_{+},{\bf u}_{-})\|_{H^{s+\frac{1}{2}}({\Omega }_{+}\Lambda ^1TM)\times H^{s+\frac{1}{2}}({\Omega }_{-}\Lambda ^1TM)}\leq\dfrac{4}{3}C_*\big\|\big(\tilde{\bf f}_{+},\tilde{\bf f}_{-},{\bf h},{\bf r}\big)\big\|_{{\mathcal Y}_{s}},$$
i.e., inequality \eqref{estimate-D-B-F-new1-new1-D-Rn2}, with the constant $C=\dfrac{4}{3}C_*$.

$\bullet $ {\bf Uniqueness result for the {nonlinear} problem (\ref{Poisson-transmission-zero-order1})}

We now show the uniqueness of a solution $\left(({\bf u}_{+},\pi _{+}),({\bf u}_{-},\pi _{-})\right)\in {\mathcal X}_{s}$ of the nonlinear transmission problem (\ref{Poisson-transmission-zero-order1}), such that $({\bf u}_{+},{\bf u}_{-})\in \mathbf{B}_{\eta }${, when condition \eqref{cond-small} is satisfied}. To this end, we assume that $\left(({\bf u}_{+}',{\pi}_{+}'),({\bf u}_{-}',{\pi}_{-}')\right)\!\!\in \!{\mathcal X}_{s}$ is another solution of problem \eqref{Poisson-transmission-zero-order1}, satisfying the condition $({\bf u}_{+}',{\bf u}_{-}')\in \mathbf{B}_{\eta }$. This assumption implies that $\left({\mathcal T}_{+}({\bf u}_{+}',{\bf u}_{-}'),{\mathcal T}_{-}({\bf u}_{+}',{\bf u}_{-}')\right)\in \mathbf{B}_{\eta },$ as $\left({\mathcal T}_{+},{\mathcal T}_{-}\right)$ maps $\mathbf{B}_{\eta }$ to $\mathbf{B}_{\eta }$. We note that the elements $$\left({\mathcal T}_{+}({\bf u}_{+}',{\bf u}_{-}'),P_{+}({\bf u}_{+}',{\bf u}_{-}'),{\mathcal T}_{-}({\bf u}_{+}',{\bf u}_{-}'),P_{-}({\bf u}_{+}',{\bf u}_{-}')\right)$$ are given by (\ref{solution-v0}) and satisfy problem (\ref{Newtonian-D-B-F-new4n}) with ${\bf u}_{\pm }$ replaced by ${\bf u}'_{\pm }$.
Then by (\ref{Poisson-Brinkman-smn-1}) and (\ref{Newtonian-D-B-F-new4n}) we obtain the linear transmission problem
\begin{equation}
\left\{\begin{array}{lll}
({\bf L}+V)\left({\mathcal T}_+({\bf u}_{+}',{\bf u}_{-}')-{\bf u}'_{+}\right)+
d\left(P_+({\bf u}_{+}',{\bf u}_{-}')\!-\!{\pi}_{+}'\right)={\bf 0}\ \mbox{ in }\ \Omega _{+},\\
{\delta {\mathcal T}_+({\bf u}_{+}',{\bf u}_{-}')=0\ \mbox{ in }\ \Omega _{+},}\\
{\bf L}\left({\mathcal T}_{-}({\bf u}_{+}',{\bf u}_{-}')-{\bf u}'_{-}\right)
+d\left(P_-({\bf u}_{+}',{\bf u}_{-}')\!-\!{\pi}_{-}'\right)={\bf 0}\ \mbox{ in }\ \Omega _{-},\\
{\delta {\mathcal T}_{-}({\bf u}_{+}',{\bf u}_{-}')=0\ \mbox{ in }\ \Omega _{-},}\\
\mu \gamma _{+}\left({\mathcal T}_+({\bf u}_{+}',{\bf u}_{-}')-{\bf u}'_{+}\right)-
\gamma _{-}\left({\mathcal T}_{-}({\bf u}_{+}',{\bf u}_{-}')-{\bf u}'_{-}\right)
={\bf 0}\ \mbox{ on }\ \partial \Omega ,\\
{\bf t}_{V}^{+}\left({\mathcal T}_+({\bf u}_{+}',{\bf u}_{-}')
-{\bf u}'_{+},P_+({\bf u}_{+}',{\bf u}_{-}')-{\pi}_{+}'\right)
\\
\hspace{1.5em}
-{{\bf t}_0^-}\left({\mathcal T}_{-}({\bf u}_{+}',{\bf u}_{-}')-{\bf u}'_{-},P_-({\bf u}_{+}',{\bf u}_{-}')-{\pi}_{-}'\right)
\\
\hspace{1.5em}
+{\mathcal P} \gamma _{+}\left({\mathcal T}_+({\bf u}_{+}',{\bf u}_{-}')-{\bf u}'_{+}\right)={\bf 0} \mbox{ on } \partial \Omega ,
\end{array} \right.
\nonumber
\end{equation}
which, has only the trivial solution in the space ${\mathcal X}_{s}$, due to the well-posedness result in Theorem \ref{Poisson-transmission-Stokes}. Therefore,
$$\left({\mathcal T}_+({\bf u}_{+}',{\bf u}_{-}'),{\mathcal T}_{-}({\bf u}_{+}',{\bf u}_{-}')\right)=\left({\bf u}_{+}',{\bf u}_{-}'\right), \ \ P_\pm({\bf u}_{+}',{\bf u}_{-}')={\pi}_{\pm }'.$$
Consequently, $({\bf u}_{+}',{\bf u}_{-}')\in \mathbf{B}_{\eta }$ is a fixed point of the contraction mapping $\left({\mathcal T}_{+},{\mathcal T}_{-}\right):\mathbf{B}_{\eta }\to \mathbf{B}_{\eta }$. Since such a fixed point is unique in $\mathbf{B}_{\eta }$, we deduce that $\left({\bf u}_{+}',{\bf u}_{-}'\right)=\left({\bf u}_{+},{\bf u}_{-}\right)$. In addition, we have $\pi'_\pm=\pi_\pm$. This shows the uniqueness of the solution of the nonlinear transmission problem (\ref{Poisson-transmission-zero-order1}) in $\mathbf{B}_{\eta }$.

Due to, e.g., \cite[Chapter XVI, \S 1, Theorem 3]{Ka-Ak}, the unique solution $\left(({\bf u}_{+},\pi _{+}),({\bf u}_{-},\pi _{-})\right)$ of the transmission problem (\ref{Poisson-transmission-zero-order1}) depends continuously on the data $\big(\tilde{\bf f}_{+},\tilde{\bf f}_{-},{\bf h},{\bf r}\big)\in {\mathcal Y}_{s}$. Indeed, this solution is expressed in terms of the unique fixed point of the contraction $\left({\mathcal T}_{+},{\mathcal T}_{-}\right):\mathbf{B}_{\eta }\to \mathbf{B}_{\eta }$, which is a continuous map with respect to the given data, and this continuity property is an immediate consequence of the continuity of the map ${\mathcal S}_V:{\mathcal Y}_{s}\to {\mathcal X}_{s}$.
\end{proof}

\appendix
{
{ 
}
\section{Mapping properties of layer potentials for the Stokes system in compact Riemannian manifolds}
\label{appendix-layer-potential}
}

{Next we present mapping properties of Stokes layer potentials that have been used to obtain the main results of this paper.}
{First, we note the following result (cf., e.g., \cite[Theorem 5.6]{D-M} for $V=0$; see also \cite[Theorem 4.8]{10-new2}).}
\begin{lem}
\label{Dirichlet}
{Let $M$ satisfy {Assumption} $\ref{H}$ and $\dim (M)\geq 2$}. Let $\Omega \subset M$ be a Lipschitz domain and $V\in L^{\infty }(M,\Lambda ^1TM\otimes \Lambda ^1TM)$ satisfying the positivity condition \eqref{V-positive}. Let $s\in (0,1)$. Then the Poisson problem for the generalized Brinkman system with Dirichlet boundary condition
\begin{equation}
\label{Dirichlet-V}
\left\{\begin{array}{lll}
{\bf L}{\bf v}+V{\bf v}+dp={\bf F}\in H^{s-\frac{3}{2}}(\Omega ,\Lambda ^1TM) \mbox{ in } \Omega ,\\
\delta {\bf v}=0 \mbox{ on } \Omega ,\\
{\gamma _{+}{\bf v}}={\bf g}\in H_{\nu }^{s}(\partial \Omega ,\Lambda ^1TM)
\end{array}
\right.
\end{equation}
is well-posed, i.e., it has a unique solution $({\bf v},p)\in H_\delta ^{s+\frac{1}{2}}(\Omega _{+},\Lambda ^1TM)\times H_*^{s-\frac{1}{2}}(\Omega _{+})$, and there exists a constant $C_V\equiv C_V(s,\Omega )>0$ such that
\begin{equation}
\label{Dirichlet-V-1}
\|{\bf v}\|_{H_\delta ^{s+\frac{1}{2}}(\Omega ,\Lambda ^1TM)}+\|p\|_{H_*^{s-\frac{1}{2}}(\Omega )}\leq C_V\|({\bf F},{\bf g})\|_{H^{s-\frac{3}{2}}(\Omega ,\Lambda ^1TM)\times H_{\nu }^{s}(\partial \Omega ,\Lambda ^1TM)}.
\end{equation}
\end{lem}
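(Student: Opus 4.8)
The plan is to realize the zeroth-order term $V{\bf v}$ as a compact perturbation of the Stokes Dirichlet problem, whose well-posedness for every $s\in(0,1)$ is furnished by \cite[Theorem 5.6]{D-M}. Let
\[
{\mathcal S}:H^{s-\frac{3}{2}}(\Omega,\Lambda ^1TM)\times H_{\nu }^{s}(\partial \Omega ,\Lambda ^1TM)\to H_\delta ^{s+\frac{1}{2}}(\Omega ,\Lambda ^1TM)\times H_*^{s-\frac{1}{2}}(\Omega )
\]
be the linear and continuous operator delivering the unique solution $({\bf v},p)={\mathcal S}({\bf F},{\bf g})$ of problem \eqref{Dirichlet-V} with $V=0$. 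Since multiplication by $V$ sends $H^{s+\frac12}(\Omega,\Lambda^1TM)$ continuously into $L^2(\Omega,\Lambda^1TM)$, the pair $({\bf v},p)$ solves \eqref{Dirichlet-V} if and only if $({\bf v},p)={\mathcal S}({\bf F}-V{\bf v},{\bf g})$, i.e.
\[
{\mathcal A}({\bf v},p):=({\bf v},p)+{\mathcal S}\big(V{\bf v},{\bf 0}\big)={\mathcal S}({\bf F},{\bf g})
\]
in $H_\delta ^{s+\frac{1}{2}}(\Omega ,\Lambda ^1TM)\times H_*^{s-\frac{1}{2}}(\Omega )$.

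First I would check that ${\mathcal A}$ is Fredholm of index zero. The map $({\bf v},p)\mapsto {\mathcal S}(V{\bf v},{\bf 0})$ factors through the embedding $L^2(\Omega,\Lambda^1TM)\hookrightarrow H^{s-\frac32}(\Omega,\Lambda^1TM)$, which is compact by Rellich's theorem (as $s-\frac32<0$); composed with the bounded operator ${\mathcal S}$ it is therefore compact, so ${\mathcal A}={\mathbb I}+(\text{compact})$ is Fredholm of index zero, exactly as in the proof of Theorem \ref{Poisson-transmission-Stokes}.

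It then remains to prove that ${\mathcal A}$ is injective. Following the transmission argument, I would use Lemma \ref{Fredholm-kernel} together with the dense embeddings between $H_\delta^{s+\frac12}(\Omega,\Lambda^1TM)\times H_*^{s-\frac12}(\Omega)$ and the corresponding space at $s=\frac12$ to reduce the triviality of ${\rm{Ker}}\,{\mathcal A}$ for all $s\in(0,1)$ to the single index $s=\frac12$. At $s=\frac12$ one has $H^{s+\frac12}(\Omega,\Lambda^1TM)=H^1(\Omega,\Lambda^1TM)=H^{\frac32-s}(\Omega,\Lambda^1TM)$, so the Green identity \eqref{conormal-derivative-generalized-2} (with $\tilde{\bf f}={\bf 0}$) may be applied with the test field ${\bf w}={\bf v}$. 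For a solution of the homogeneous problem (so $\gamma_+{\bf v}={\bf 0}$, $\delta{\bf v}=0$) this gives
\[
0=\langle {\bf t}_V^+({\bf v},p),\gamma _+{\bf v}\rangle _{\partial \Omega }=2\langle {\rm{Def}}\,{\bf v},{\rm{Def}}\,{\bf v}\rangle _{\Omega }+\langle V{\bf v},{\bf v}\rangle _{\Omega }.
\]
Both terms on the right are nonnegative, the second by the positivity condition \eqref{V-positive-open}, so each vanishes; in particular ${\rm{Def}}\,{\bf v}=0$ in $\Omega$. Since $\gamma_+{\bf v}={\bf 0}$ places ${\bf v}\in H_0^1(\Omega,\Lambda^1TM)$, the injectivity of the deformation operator \eqref{def-one-to-one} forces ${\bf v}={\bf 0}$; then $dp={\bf 0}$, so $p$ is constant, and the normalization $\langle \widetilde E_+p,1\rangle_\Omega=0$ built into $H_*^{s-\frac12}(\Omega)$ yields $p=0$. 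Hence ${\rm{Ker}}\,{\mathcal A}=\{0\}$, ${\mathcal A}$ is an isomorphism for every $s\in(0,1)$, and $({\bf v},p)={\mathcal A}^{-1}{\mathcal S}({\bf F},{\bf g})$ delivers both uniqueness and the estimate \eqref{Dirichlet-V-1} with $C_V=\|{\mathcal A}^{-1}\|\,\|{\mathcal S}\|$.

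The routine parts are the compactness/Fredholm bookkeeping and the energy computation. The one genuinely delicate point is the regularity mismatch in the Green identity: the a priori solution only lies in $H^{s+\frac12}$, which need not embed into the test space $H^{\frac32-s}$ when $s>\frac12$, and it is precisely the reduction to $s=\frac12$ via Lemma \ref{Fredholm-kernel} that removes this obstruction. I would also take care to confirm that Assumption \ref{H} enters only through the cited Stokes well-posedness and the injectivity \eqref{def-one-to-one}, so that no spectral hypothesis on $V$ beyond \eqref{V-positive} is required.
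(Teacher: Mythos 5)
Your proposal is correct and follows essentially the same route as the paper's proof: the paper decomposes the problem operator as ${\mathcal F}_V={\mathcal F}_0+{\mathcal F}_{V;0}$, with ${\mathcal F}_0$ the Stokes Dirichlet isomorphism of \cite[Theorem 5.6]{D-M} and ${\mathcal F}_{V;0}$ compact via $L^2(\Omega,\Lambda^1TM)\hookrightarrow H^{s-\frac{3}{2}}(\Omega,\Lambda^1TM)$, which is exactly your second-kind equation ${\mathcal A}={\mathbb I}+{\mathcal S}(V\,\cdot\,,{\bf 0})$ transported by the isomorphism ${\mathcal S}$ (indeed ${\mathcal A}={\mathcal S}\circ{\mathcal F}_V$), and it then invokes Lemma \ref{Fredholm-kernel} to reduce injectivity to $s=\frac{1}{2}$, where the Green identity \eqref{conormal-derivative-generalized-2}, the positivity of $V$, the injectivity of ${\rm{Def}}$ on $H_0^1(\Omega,\Lambda^1TM)$ from \eqref{def-one-to-one}, and the pressure normalization in $H_*^{s-\frac{1}{2}}(\Omega)$ yield the trivial kernel, just as in your argument. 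Your remark that the reduction to $s=\frac{1}{2}$ is precisely what legitimizes testing the Green identity with ${\bf w}={\bf v}$ accurately identifies the role this device plays in the paper's proof as well.
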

\begin{proof}
By using similar arguments to those in the proof of \cite[Theorem 4.8]{10-new2}, we consider the spaces
\begin{align}
\label{spaces-Poisson-transmission}
&H^{s+\frac{1}{2}}_{\delta }(\Omega ,\Lambda ^1TM):=\left\{{\bf u}\in H^{s+\frac{1}{2}}(\Omega ,\Lambda ^1TM):\delta {\bf u}=0 \mbox{ in } \Omega \right\},\\
&D_{s;\delta }:=H_\delta ^{s+\frac{1}{2}}(\Omega ,\Lambda ^1TM)\times H_*^{s-\frac{1}{2}}(\Omega ),\\
&R_{s;\delta }:=H^{s-\frac{3}{2}}(\Omega ,\Lambda ^1TM)\times H_\nu ^{s}(\partial \Omega ,\Lambda
^1TM)
\end{align}
and the operator
\begin{equation}
\label{Dirichlet-V-2}
{\mathcal F}_{V}:D_{s;\delta }\to R_{s;\delta },\ {\mathcal F}_{V}:=\left(
\begin{array}{cc}
{\bf L}+V & d\\
\gamma _{+} & 0
\end{array}
\right),
\end{equation}
which is associated to the Poisson problem \eqref{Dirichlet-V}. We note that
\begin{align}
\label{Dirichlet-V-3}
{\mathcal F}_{V}={\mathcal F}_{0}+{\mathcal F}_{V;0},
\end{align}
where
\begin{equation}
\label{Dirichlet-V-3}
{\mathcal F}_{0}:D_{s;\delta }\to R_{s;\delta },\ {\mathcal F}_{0}:=\left(
\begin{array}{cc}
{\bf L} & d\\
\gamma _{+} & 0
\end{array}
\right)
\end{equation}
is the operator describing the Poisson problem for the Stokes system, and
\begin{equation}
\label{Dirichlet-V-4}
{\mathcal F}_{V;0}:D_{s;\delta }\to R_{s;\delta },\ {\mathcal F}_{V;0}:=\left(
\begin{array}{cc}
V & 0\\
0 & 0
\end{array}
\right).
\end{equation}
The operator ${\mathcal F}_{0}$ is an isomorphism for any $s\in (0,1)$, due to the well-posedness of the Poisson problem for the Stokes system (cf. \cite[Theorem 5.6]{D-M}). Operator \eqref{Dirichlet-V-4} is compact due to the compactness of the embedding $L^2(\Omega ,\Lambda ^1TM)\hookrightarrow H^{s-\frac{3}{2}}(\Omega ,\Lambda ^1TM)$. Thus, ${\mathcal F}_{V}:D_{s;\delta }\to R_{s;\delta }$ is a Fredholm operator with index zero for any $s\in (0,1)$. Then, by Lemma \ref{Fredholm-kernel},
\begin{align}
\label{Dirichlet-V-5}
{\rm{Ker}}\left\{{\mathcal F}_{V}:D_{s;\delta }\to R_{s;\delta }\right\}=
{\rm{Ker}}\left\{{\mathcal F}_{V}:D_{\frac{1}{2};\delta }\to R_{\frac{1}{2};\delta }\right\},\ \forall \ s\in (0,1).
\end{align}

{Let us} assume that $({\bf v}_0,p_0)\in {\rm{Ker}}\left\{{\mathcal F}_{V}:D_{\frac{1}{2};\delta }\to R_{\frac{1}{2};\delta }\right\}$. By using the Green formula \eqref{conormal-derivative-generalized-2} and the condition $\gamma _{+}{\bf v}_0={\bf 0}$ on $\partial \Omega $, we obtain that
\begin{align}
\label{Dirichlet-V-7}
2\langle {\rm{Def}}\ {\bf v}_{0},&{\rm{Def}}\ {\bf v}_{0}\rangle _{\Omega _{+}}
{+\left\langle V{\bf v}_0,{\bf v}_0\right\rangle _{\Omega _{+}}}=
\langle {{\bf t}_0^+}({\bf v}_{0},p_{0}),\gamma _{+}{\bf v}_{0}\rangle _{\partial \Omega }=0,
\end{align}
and hence ${\rm{Def}}\ {\bf v}_{0}=0$ in $\Omega $. Since $\gamma _{+}{\bf v}_{0}={\bf 0}$, i.e., ${\bf v}_{0}\in H_0^1(\Omega ,\Lambda ^1TM)$, and the operator ${\rm{Def}}:H_0^1(\Omega ,\Lambda ^1TM)\to L^2(\Omega ,S^2T^*M)$ is one-to-one (cf. \cite[(6.17)]{D-M}), we deduce that ${\bf v}_{0}={\bf 0}$ in $\Omega $. Then by the Brinkman equation and assumption $p_0\in L_*^2(\Omega )$ we obtain $p_0=0$ in $\Omega $. Hence, $({\bf v}_0,p_0)=({\bf 0},0)$, i.e.,
\begin{align}
\label{Dirichlet-V-6}
{\rm{Ker}}\left\{{\mathcal F}_{V}:D_{\frac{1}{2};\delta }\to R_{\frac{1}{2};\delta }\right\}=\{({\bf 0},0)\}.
\end{align}
Then by relation \eqref{Dirichlet-V-5} {it} follows that the Fredholm operator with index zero ${\mathcal F}_{V}:D_{s;\delta }\to R_{s;\delta }$ is an isomorphism for any $s\in (0,1)$. Consequently, the Poisson problem \eqref{Dirichlet-V} is well-posed, i.e., it has a unique solution $({\bf v},p)\in D_{V;\delta }$, which satisfies an estimate of type \eqref{Dirichlet-V-1}, due to the continuity of the inverse operator ${\mathcal F}_{V}^{-1}:R_{s;\delta }\to D_{s;\delta }$.
\end{proof}

The next theorem presents some of the main mapping properties of the layer potentials for the Stokes system (\cite[Proposition 4.2.5, 4.2.9, Corollary 4.3.2, Theorems 5.3.6, 5.4.1, 5.4.3, 10.5.3]{M-W},
\cite[Theorem 2.1, (3.5), Proposition 3.5]{D-M}, \cite[Theorems 3.1, 6.1]{M-T}, and \cite[Theorems 4.3, 4.9, 4.11, (131), (132), (137), Lemma 5.4]{10-new2} for a pseudodifferential Brinkman operator).
\begin{thm}
\label{single-layer-operator-Brinkman} {Let $M$ satisfy Assumption $\ref{H}$ and $\dim (M)\geq 2$}. Let $\Omega _{+}:=\Omega \subset M$ be a Lipschitz domain. {Let $\Omega _{-}=M\setminus \overline{\Omega }$ satisfy Assumption $\ref{connected-set}$.} Let $s\in (0,1)$.
\begin{itemize}
\item[$(i)$] Then the following operators are linear and bounded,
\begin{align}
\label{ss-s1}
&\left(\!{\bf V}_{\partial\Omega }\!\right)\!|_{\Omega _{\pm }}:H^{s-1}(\partial\Omega ,\Lambda ^1TM)\!\to \!H^{s+\frac{1}{2}}(\Omega _{\pm },\Lambda ^1TM)\nonumber\\
&\left(\!{\mathcal Q_{\partial \Omega}}\!\right)\!|_{\Omega _{\pm }}:H^{s-1}(\partial\Omega ,\Lambda ^1TM)\!\to \!H^{s-\frac{1}{2}}(\Omega _{\pm }),
\\
\label{ds-s1}
&\left(\!{\bf W}_{\partial\Omega }\!\right)\!|_{\Omega _{\pm }}:H^{s}(\partial\Omega ,\Lambda ^1TM)\to H^{s+\frac{1}{2}}(\Omega _{\pm },\Lambda ^1TM)\nonumber\\
&\left({\mathcal P}_{\partial\Omega }\right)|_{\Omega _{\pm }}:H^{s}(\partial\Omega ,\Lambda ^1TM)\to H^{s-\frac{1}{2}}(\Omega _{\pm }).
\end{align}
\item[$(ii)$]
Let ${\boldsymbol\psi}\in H^{s-1}(\partial \Omega ,\Lambda ^1TM)$ and ${\boldsymbol \phi}\in H^s_{\nu }(\partial \Omega ,\Lambda ^1TM)$ be given. Then the following relations hold a.e. on $\partial\Omega $,
\begin{align}
\label{single-layer-Brinkman8a}
&\gamma _+\big({\bf V}_{\partial\Omega}{\boldsymbol\psi}\big)=\gamma _{-}\big({\bf V}_{\partial\Omega}{\boldsymbol\psi}\big)=:{\mathcal V}_{\partial\Omega}{\boldsymbol\psi},\\
\label{double-layer-Brinkman12}
&\frac{1}{2}{\boldsymbol \phi}+\gamma _{+}({\bf W}_{\partial\Omega }{\boldsymbol \phi})=
{-}\frac{1}{2}{\boldsymbol \phi}+\gamma _{-}({\bf W}_{\partial\Omega }{\boldsymbol \phi})
=:{\bf K}_{{\partial\Omega }}{\boldsymbol \phi},\\
\label{eq:dlB13}
&{-}\frac{1}{2}{\bf f}+{{\bf t}^+_0}\left({\bf V}_{\partial\Omega}{\boldsymbol\psi},
{\mathcal Q_{\partial \Omega}\boldsymbol\psi}\right)
=\frac{1}{2}{\bf f}+{{\bf t}^-_0}\left({\bf V}_{\partial\Omega}{\boldsymbol\psi},{\mathcal Q_{\partial \Omega}\boldsymbol\psi}\right)=:{\bf K}_{{\partial\Omega } }^*{\bf f},\\
\label{double-layer-Brinkman14b}
&{{\bf D}_{\partial \Omega }^{+}{\boldsymbol \phi}-{\bf D}_{\partial \Omega }^{-}{\boldsymbol \phi}\in {\mathbb R}\nu ,}
\end{align}
where ${\bf D}_{\partial \Omega }^{\pm }{\boldsymbol \phi}:={\bf t}_0^{\pm }\left({\bf W}_{\partial \Omega }{\boldsymbol \phi},{\mathcal P}_{\partial \Omega }{\boldsymbol \phi}\right),$ and
${\bf K}_{\partial \Omega }^*$
is the transpose of the double-layer potential operator ${\bf K}_{\partial \Omega }$. In addition, the operators
\begin{align}
\label{ss-s2}
&{\mathcal V}_{\partial\Omega }:H^{s-1}(\partial\Omega ,\Lambda ^1TM)\to H^{s}(\partial\Omega ,\Lambda ^1TM),\nonumber\\
&{\bf K}_{\partial\Omega }:H^{s}(\partial\Omega ,\Lambda ^1TM)\to H^{s}(\partial\Omega ,\Lambda ^1TM),
\\
\label{ds-s2}
&{\bf K}_{\partial\Omega }^*\!:\!H^{s-1}(\partial\Omega ,\Lambda ^1TM)\!\to \!
H^{s-1}(\partial\Omega ,\Lambda ^1TM)\nonumber\\
&{\bf D}_{\partial\Omega }\!:\!H^{s}(\partial\Omega ,\Lambda ^1TM)\!\to \!H^{s-1}(\partial\Omega ,\Lambda ^1TM)
\end{align}
are linear and bounded, ${\bf V}_{\partial \Omega }\nu =0,\, {{\mathcal Q}_{\partial \Omega }}\nu =c_{\pm}\in {\mathbb R}$ {in} $\Omega _{\pm}$, and
\begin{align}
\label{Fredholm-single-layer1}
&{\rm{Ker}}\left\{{\mathcal V}_{\partial \Omega }:H^{s-1}(\partial \Omega ,\Lambda ^1TM)\to H^{s}(\partial \Omega ,\Lambda ^1TM)\right\}={\mathbb R}\nu ,
\end{align}
where ${\mathbb R}\nu :=\{c\nu :c\in {\mathbb R}\}.$
\item[$(iii)$]
Let ${\mathcal P}\in L^{\infty }(\partial \Omega ,\Lambda ^1TM\otimes \Lambda ^1TM)$ be a {symmetric tensor field}\footnote{${\mathcal P}\in L^{\infty }(\partial \Omega ,\Lambda ^1TM\otimes \Lambda ^1TM)$ defines a multiplication operator denoted also by ${\mathcal P}$. Consequently, ${\mathcal P}:L^2(\partial \Omega ,\Lambda ^1TM)\to L^2(\partial \Omega ,\Lambda ^1TM)$ is a linear and continuous operator.} with the property that there exists a constant $C_{\mathcal P}>0$ such that
\begin{equation}
\label{Poisson-transmission3}
\begin{array}{ll}
\langle {\mathcal P}{\bf v},{\bf v}\rangle _{\partial
\Omega }\geq C_{\mathcal P}\|{\bf v}\|_{L^2(\partial \Omega ,\Lambda ^1TM)}^2,\ \forall \ {\bf v}\in L^2(\partial \Omega ,\Lambda ^1TM).
\end{array}
\end{equation}
Let {$\mu \in (0,\infty )\setminus \{1\}$} be a given constant. Then the following operators
are isomorphisms
\begin{align}
\label{Fredholm-sm-ms}
&\frac{1}{2}(1\!+\!\mu ){\mathbb I}\!+\!(1\!-\!\mu ){\bf K}_{\partial \Omega }\!+\!{\mathcal V}_{\partial \Omega }{\mathcal P}:H^s_{\nu }(\partial\Omega ,\Lambda ^1TM)\!\to \!H^s_{\nu }(\partial\Omega ,\Lambda ^1TM),\\
\label{Fredholm-adj-sm-ms}
&\frac{1}{2}(1+\mu ){\mathbb I}+(1-\mu ){\bf K}_{\partial \Omega }^*+{\mathcal P}{\mathcal V}_{\partial \Omega }:H^{s-1}(\partial\Omega ,\Lambda ^1TM)/{{\mathbb R}\nu }\nonumber\\
&\hspace{15em}\to H^{s-1}(\partial\Omega ,\Lambda ^1TM)/{{\mathbb R}\nu }.
\end{align}
\end{itemize}
\end{thm}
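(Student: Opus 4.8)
The statements in (i) and (ii) are the standard mapping and jump properties of the Stokes single- and double-layer potentials, and I would simply record that they follow from the cited works \cite{M-W,D-M,M-T,10-new2}. The plan below therefore concerns only the invertibility assertions in (iii). Writing $\mathcal M_\mu:=\frac12(1+\mu)\mathbb I+(1-\mu){\bf K}_{\partial\Omega}$, the overall strategy for \eqref{Fredholm-sm-ms} is to show it is Fredholm of index zero and injective, and then to deduce \eqref{Fredholm-adj-sm-ms} by transposition. First I would note that $\mathcal M_\mu:H^s_\nu\to H^s_\nu$ is Fredholm of index zero for every $\mu\in(0,\infty)\setminus\{1\}$ and $s\in(0,1)$: exactly as in \eqref{new-oper-1}--\eqref{new-oper-2} one factors $\mathcal M_\mu=(1-\mu)\big(\tfrac12\tfrac{1+\mu}{1-\mu}\mathbb I+{\bf K}_{\partial\Omega}\big)$ for $\mu\in(0,1)$ and $\mathcal M_\mu=(1-\mu)\big(-\tfrac12\tfrac{1+\mu^{-1}}{1-\mu^{-1}}\mathbb I+{\bf K}_{\partial\Omega}\big)$ for $\mu>1$; in both cases the scalar multiplying $\mathbb I$ has modulus $>\tfrac12$, so the bracketed operator is Fredholm of index zero by the property of the operators in \eqref{Fredholm3-new-S}. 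Since the single-layer operator is smoothing of order one and $\mathcal P$ is bounded on $L^2$, the composition $\mathcal V_{\partial\Omega}\mathcal P$ factors through the compact embedding $H^s_\nu\hookrightarrow L^2$ and is therefore compact; hence the operator in \eqref{Fredholm-sm-ms} is Fredholm of index zero. Using the continuous, dense embeddings $H^s_\nu\hookrightarrow H^{1/2}_\nu$ ($s>\tfrac12$) and $H^{1/2}_\nu\hookrightarrow H^s_\nu$ ($s<\tfrac12$) together with Lemma \ref{Fredholm-kernel}, the kernel is independent of $s$, so it suffices to establish injectivity at $s=\tfrac12$.

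The injectivity at $s=\tfrac12$ is the heart of the matter, and I would prove it by an energy argument that stays independent of Lemma \ref{Poisson-transmission-Stokes-P}. Let $\psi\in H^{1/2}_\nu$ satisfy $\mathcal M_\mu\psi+\mathcal V_{\partial\Omega}\mathcal P\psi=0$. I would solve the $L^2$-Dirichlet problem for the homogeneous Stokes system in $\Omega_+$ with datum $\psi$ and in $\Omega_-$ with datum $\mu\psi$ (well-posed by Lemma \ref{Dirichlet} with $V=0$, i.e. \cite[Theorem 5.6]{D-M}), obtaining Stokes pairs $({\bf u}_\pm,\pi_\pm)$ with canonical tractions ${\bf t}_0^\pm:={\bf t}_0^\pm({\bf u}_\pm,\pi_\pm)$. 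The representation formula \cite[(3.7)]{D-M} together with the jump relations \eqref{single-layer-Brinkman8a} and \eqref{double-layer-Brinkman12} gives $\mathcal V_{\partial\Omega}{\bf t}_0^+=(\tfrac12\mathbb I+{\bf K}_{\partial\Omega})\psi$ and $\mathcal V_{\partial\Omega}{\bf t}_0^-=(-\tfrac12\mathbb I+{\bf K}_{\partial\Omega})(\mu\psi)$, whence $\mathcal V_{\partial\Omega}({\bf t}_0^+-{\bf t}_0^-)=\mathcal M_\mu\psi=-\mathcal V_{\partial\Omega}\mathcal P\psi$. Since $\mathrm{Ker}\,\mathcal V_{\partial\Omega}=\mathbb R\nu$ by \eqref{Fredholm-single-layer1}, this forces ${\bf t}_0^+-{\bf t}_0^-+\mathcal P\psi=c\,\nu$ for some $c\in\mathbb R$. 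Applying the Green identity \eqref{conormal-derivative-generalized-2} (with $V=0$) in each of $\Omega_\pm$, multiplying the $\Omega_+$ relation by $\mu$, adding, and using $\langle\psi,\nu\rangle_{\partial\Omega}=0$, I would obtain
\[
2\mu\langle\mathrm{Def}\,{\bf u}_+,\mathrm{Def}\,{\bf u}_+\rangle_{\Omega_+}
+2\langle\mathrm{Def}\,{\bf u}_-,\mathrm{Def}\,{\bf u}_-\rangle_{\Omega_-}
=-\mu\langle\mathcal P\psi,\psi\rangle_{\partial\Omega}
\le-\mu C_{\mathcal P}\,\|\psi\|_{L^2(\partial\Omega,\Lambda^1TM)}^2 .
\]
The left-hand side is nonnegative while the right-hand side is nonpositive by the strong positivity \eqref{Poisson-transmission3}, so both vanish and $\psi=0$. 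Thus \eqref{Fredholm-sm-ms} is injective, and being Fredholm of index zero it is an isomorphism for every $s\in(0,1)$.

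For \eqref{Fredholm-adj-sm-ms} I would argue by transposition. The single-layer operator $\mathcal V_{\partial\Omega}$ is symmetric because the Stokes fundamental solution is symmetric, $\mathcal P$ is a symmetric multiplier, and ${\bf K}^*_{\partial\Omega}$ is by definition the transpose of ${\bf K}_{\partial\Omega}$; hence the operator in \eqref{Fredholm-adj-sm-ms} is precisely the transpose of the operator in \eqref{Fredholm-sm-ms}. Under the duality $\big(H^t_\nu(\partial\Omega,\Lambda^1TM)\big)'\cong H^{-t}(\partial\Omega,\Lambda^1TM)/\mathbb R\nu$, the annihilator of $H^t_\nu$ being $\mathbb R\nu$, the choice $t=1-s\in(0,1)$ identifies this transpose, acting on $H^{s-1}/\mathbb R\nu$, with the dual of the isomorphism \eqref{Fredholm-sm-ms} taken at smoothness $1-s$. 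Since the transpose of an isomorphism is an isomorphism, \eqref{Fredholm-adj-sm-ms} is invertible as well.

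I expect the main obstacle to be the injectivity step at $s=\tfrac12$. The delicate point is that $\mathrm{Ker}\,\mathcal V_{\partial\Omega}=\mathbb R\nu$ only pins the defect ${\bf t}_0^+-{\bf t}_0^-+\mathcal P\psi$ down modulo $\mathbb R\nu$; the argument succeeds precisely because $\psi\in H^{1/2}_\nu$ annihilates this residual constant through $\langle\psi,\nu\rangle_{\partial\Omega}=0$, after which the coercivity of $\mathcal P$ can be brought to bear. Equally important is to realize this injectivity using only the Stokes Dirichlet well-posedness, the representation formula, and \eqref{Fredholm-single-layer1}, so that the reasoning does not become circular with Lemma \ref{Poisson-transmission-Stokes-P}, whose own proof invokes (iii).
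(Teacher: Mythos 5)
Your proposal is correct, and on the decisive injectivity step it takes a genuinely different route from the paper. The Fredholm part coincides: like the paper you factor the operator as in \eqref{new-oper-1}--\eqref{new-oper-2} so that \eqref{Fredholm3-new-S} applies, and you obtain compactness of ${\mathcal V}_{\partial \Omega }{\mathcal P}$ through the smoothing ${\mathcal V}_{\partial\Omega}:L^2\to H^1$ and a compact embedding (one should also record that ${\mathcal V}_{\partial\Omega}{\mathcal P}$ preserves $H^s_{\nu}$, since $\langle \nu ,{\mathcal V}_{\partial \Omega }{\mathcal P}\psi \rangle _{\partial \Omega }=\langle {\mathcal V}_{\partial \Omega }\nu ,{\mathcal P}\psi \rangle _{\partial \Omega }=0$ by the symmetry of ${\mathcal V}_{\partial\Omega}$ and ${\bf V}_{\partial \Omega }\nu =0$). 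But where the paper proves injectivity on the \emph{adjoint} side --- taking a kernel class $[{\boldsymbol \varphi}]$ of the quotient operator \eqref{Fredholm5} on $L^2(\partial \Omega ,\Lambda ^1TM)/{\mathbb R}\nu$, forming the single-layer pair $({\bf u}_{\pm },\pi _{\pm })=\left({\bf V}_{\partial \Omega }{\boldsymbol \varphi},{\mathcal Q}_{\partial \Omega }{\boldsymbol \varphi}\right)|_{\Omega _{\pm }}$, whose traces match automatically, and running the Green-identity/coercivity argument to force ${\mathcal V}_{\partial \Omega }{\boldsymbol \varphi}={\bf 0}$ and hence ${\boldsymbol \varphi}\in {\mathbb R}\nu$ via \eqref{Fredholm-single-layer1}, then dualizing back to $L^2_{\nu }$ and lifting to $H^s_{\nu }$ with Lemma \ref{Fredholm-kernel} --- you prove injectivity of the \emph{direct} operator at $s=\frac{1}{2}$ by solving the two Dirichlet problems with data $\psi$ and $\mu \psi$ (Lemma \ref{Dirichlet} with $V=0$, i.e., \cite[Theorem 5.6]{D-M}) and using the representation formulas and jump relations to get ${\mathcal V}_{\partial \Omega }\left({\bf t}_0^{+}-{\bf t}_0^{-}+{\mathcal P}\psi \right)={\bf 0}$, so the traction defect is $c\nu$, which $\langle \nu ,\psi \rangle _{\partial \Omega }=0$ annihilates in the energy identity. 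This is, in effect, the paper's uniqueness argument for Lemma \ref{Poisson-transmission-Stokes-P} run in reverse, and, as you correctly flag, it is non-circular because it relies only on Dirichlet well-posedness, the representation formula, and \eqref{Fredholm-single-layer1}. What each approach buys: the paper's adjoint route stays at the natural $L^2$ level where the single-layer ansatz supplies a transmission solution with no Dirichlet solve, while your direct route computes the kernel of the operator actually appearing in \eqref{Fredholm-sm-ms} and then obtains \eqref{Fredholm-adj-sm-ms} by an explicit transposition at smoothness $1-s$ under the duality $\big(H^{1-s}_{\nu }(\partial \Omega ,\Lambda ^1TM)\big)'\cong H^{s-1}(\partial \Omega ,\Lambda ^1TM)/{\mathbb R}\nu$ --- which is arguably cleaner than the paper, where only the $L^2$ quotient statement \eqref{Fredholm5} is treated explicitly. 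Two minor points to absorb when writing this up: the exterior pressure is determined only modulo constants, so ${\bf t}_0^{-}$ is defined modulo ${\mathbb R}\nu$, but your argument is insensitive to this both in the representation formula (since ${\bf V}_{\partial \Omega }\nu =0$) and in the final pairing (since $\langle \nu ,\psi \rangle _{\partial \Omega }=0$); and the quotient operator in \eqref{Fredholm-adj-sm-ms} is well defined because the transpose of an operator preserving $H^{1-s}_{\nu }$ automatically descends to the dual quotient, consistently with ${\bf K}_{\partial \Omega }^*\nu \in {\mathbb R}\nu$ and ${\mathcal P}{\mathcal V}_{\partial \Omega }\nu ={\bf 0}$.
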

\begin{proof}
All mapping properties mentioned in (i) and (ii) for the layer potential operators of the Stokes system in Sobolev spaces on Lipschitz domains in the Euclidean setting {or} on compact Riemannian manifolds, as well as their jump relations across a Lipschitz boundary, mentioned in (ii), are well-known (see, e.g., \cite[Propositions 3.3 and 3.4]{D-M}, \cite[Theorem 3.1]{M-T} {and \cite{M-W}}). Next we show the mapping properties in (iii).

Let $\mu \in (0,1)$ be fixed. Let us show that operators \eqref{Fredholm-sm-ms} and \eqref{Fredholm-adj-sm-ms} are isomorphisms for any $s\in (0,1)$. First, we note that the operators ${\bf K}_{\partial \Omega }:L^2(\partial\Omega ,\Lambda ^1TM)\to L^2(\partial\Omega ,\Lambda ^1TM)$, ${\bf K}_{\partial \Omega }:H^1(\partial\Omega ,\Lambda ^1TM)\to H^1(\partial\Omega ,\Lambda ^1TM)$ are bounded (see \cite[(3.50)]{D-M}), and the operators
\begin{align}
\label{Fredholm1}
&\pm \frac{1}{2}\frac{1+\mu }{1-\mu }{\mathbb I}+{\bf K}_{\partial \Omega }:L^2(\partial\Omega ,\Lambda ^1TM)\to L^2(\partial\Omega ,\Lambda ^1TM),\\
\label{Fredholm2}
&\pm \frac{1}{2}\frac{1+\mu }{1-\mu }{\mathbb I}+{\bf K}_{\partial \Omega }:H^{1}(\partial\Omega ,\Lambda ^1TM)\to H^{1}(\partial\Omega ,\Lambda ^1TM)
\end{align}
are Fredholm with index zero {(see \cite[Lemma 5.3]{10-new2})}, where ${\bf K}_{\partial \Omega }{\boldsymbol \phi}:={\bf K}_{0;\partial \Omega }{\boldsymbol \phi}$ is defined in \eqref{double-layer-Brinkman3} for any ${\boldsymbol \phi}\in H^r(\partial \Omega ,\Lambda ^1TM)$, $r\in \{0,1\}$, and, in addition, ${\bf K}_{\partial \Omega }^*$ is the adjoint of ${\bf K}_{\partial \Omega }$.
Then by an interpolation argument, as in the proof of \cite[Theorem 10.5.3]{M-W}, the operators
\begin{align}
\label{Fredholm3-new-S}
&{\pm \frac{1}{2}\frac{1+\mu }{1-\mu }{\mathbb I}+{\bf K}_{\partial \Omega }:H^s(\partial\Omega ,\Lambda ^1TM)\to H^s(\partial\Omega ,\Lambda ^1TM),}
\end{align}
are Fredholm with index zero as well, for any $s\!\in \!(0,1)$. Moreover, the continuity of the operator
${\mathcal V}_{\partial \Omega }{\mathcal P}:L^{2}(\partial\Omega ,\Lambda ^1TM)\to H^1(\partial\Omega ,\Lambda ^1TM)$ and the compactness of the embeddings $H^1(\partial\Omega ,\Lambda ^1TM)\hookrightarrow H^s(\partial\Omega ,\Lambda ^1TM)\hookrightarrow L^2(\partial \Omega ,\Lambda ^1TM)$
imply that the operators
\begin{align}
\label{Fredholm1-new-S}
&\frac{1}{2}\frac{1+\mu }{1-\mu }{\mathbb I}\!+\!{\bf K}_{\partial \Omega }\!+\!\frac{1}{1-\mu }{\mathcal V}_{\partial \Omega }{\mathcal P}:L^2(\partial\Omega ,\Lambda ^1TM)\!\to \! L^2(\partial\Omega ,\Lambda ^1TM),
\\
\label{Fredholm3-new-SM}
&{\frac{1}{2}\frac{1+\mu }{1-\mu }{\mathbb I}\!+\!{\bf K}_{\partial \Omega }+\frac{1}{1-\mu }{\mathcal V}_{\partial \Omega }{\mathcal P}\!:\!H^s(\partial\Omega ,\Lambda ^1TM)\!\to \! H^s(\partial\Omega ,\Lambda ^1TM),}
\end{align}
are also Fredholm with index zero for any $s\in (0,1)$.
Now, the property that
\begin{equation}
\label{Sobolev-normal-1} H^r_{\nu }(\partial \Omega ,\Lambda ^1TM):=
\left\{{\boldsymbol \phi}\in H^r(\partial \Omega ,\Lambda ^1TM): \langle \nu ,{\boldsymbol \phi}\rangle _{\partial \Omega }=0\right\}
\end{equation}
is a closed subspace of $H^r(\partial \Omega ,\Lambda ^1TM)$ for any $r\in [0,1)$ (see, e.g., \cite[Proposition 10.6]{Agr-1})
implies that
\begin{align}
\label{Fredholm3}
&{\frac{1}{2}\frac{1+\mu }{1-\mu }{\mathbb I}\!+\!{\bf K}_{\partial \Omega }+\frac{1}{1-\mu }{\mathcal V}_{\partial \Omega }{\mathcal P}:L_{\nu }^2(\partial\Omega ,\Lambda ^1TM)\!\to \! L_{\nu }^2(\partial\Omega ,\Lambda ^1TM),}\\
\label{Fredholm4}
&{\frac{1}{2}\frac{1+\mu }{1-\mu }{\mathbb I}\!+\!{\bf K}_{\partial \Omega }\!+\!\frac{1}{1-\mu }{\mathcal V}_{\partial \Omega }{\mathcal P}:H_\nu ^{s}(\partial\Omega ,\Lambda ^1TM)\to H_\nu ^{s}(\partial\Omega ,\Lambda ^1TM)}
\end{align}
are Fredholm operators with zero index as well,
for any $s\in (0,1)$. Note that\\ $L^2_{\nu }(\Omega ,\Lambda ^1TM):=H^0_{\nu }(\partial \Omega ,\Lambda ^1TM)$. Moreover, the operator
\begin{align}
\label{Fredholm5}
\!\!\!\!\!\frac{1}{2}\frac{1\!+\!\mu }{1\!-\!\mu }{\mathbb I}\!+\!\!{\bf K}_{\partial \Omega }^*\!+\!\!\frac{1}{1\!-\!\mu }{\mathcal P}{\mathcal V}_{\partial \Omega }\!:\!L^2(\partial\Omega ,\Lambda ^1TM)/{{\mathbb R}\nu }\!\to \!L^2(\partial\Omega ,\Lambda ^1TM)/{{\mathbb R}\nu }
\end{align}
is Fredholm with index zero, as the adjoint of operator \eqref{Fredholm3} and due to the relation $L^2(\partial\Omega ,\Lambda ^1TM)/{{\mathbb R}\nu }=(L^2_{\nu }(\Omega ,\Lambda ^1TM))'$ (cf., e.g., \cite[(5.118)]{M-W}).

Next we show that the operator in \eqref{Fredholm5} is one-to-one by using a similar argument to that in the proof of \cite[Lemma 5.4]{10-new2}. To this aim, we assume that $[{\boldsymbol \varphi} ]\in L^2(\partial\Omega ,\Lambda ^1TM)/{{\mathbb R}\nu }$ (i.e., $[{\boldsymbol \varphi}]={\boldsymbol \varphi} +{\mathbb R}\nu $, where ${\boldsymbol \varphi} \in L^2(\partial\Omega ,\Lambda ^1TM)$) belongs to the null space of this operator. Therefore, due to the properties \eqref{Fredholm-single-layer1} and ${\bf K}_{\partial \Omega }^*\nu \in {\mathbb R}\nu $ (see, e.g., \cite[(3.6)]{D-M}), we obtain the equivalence
\begin{align}
\label{Fredholm6}
&{\bf K}_{\partial \Omega }^*[{\boldsymbol \varphi}]=-\frac{1}{2}\frac{1+\mu }{1-\mu }[{\boldsymbol \varphi}]-\frac{1}{1-\mu }{\mathcal P}{\mathcal V}_{\partial \Omega }[{\boldsymbol \varphi}]
\Longleftrightarrow \nonumber
\\
&{\bf K}_{\partial \Omega }^*{\boldsymbol \varphi}=-\frac{1}{2}\frac{1+\mu }{1-\mu }{\boldsymbol \varphi} -\frac{1}{1-\mu }{\mathcal P}{\mathcal V}_{\partial \Omega }{\boldsymbol \varphi} +c\nu ,
\end{align}
with some $c\in {\mathbb R}$.
Then the fields ${\bf u}_{\pm }\!:=\!\left({\bf V}_{\partial \Omega }{\boldsymbol \varphi} \right)|_{\Omega _{\pm }}\!\in \!H^1(\Omega _{\pm },\Lambda ^1TM)$ and $\pi _{\pm }\!:=\!\left({\mathcal Q}_{\partial \Omega }^s{\boldsymbol \varphi} \right)|_{\Omega _{\pm }}\!\in \!L^2(\Omega _{\pm })$ satisfy the {homogeneous} Stokes system in $\Omega _{\pm }$ and the relations
\begin{align}
\label{Fredholm7}
&\gamma _{+}{\bf u}_{+}=\gamma _{-}{\bf u}_{-}={\mathcal V}_{\partial \Omega }{\boldsymbol \varphi} ,\\
\label{Fredholm8}
&{{\bf t}_0^+}({\bf u}_{+},\pi _{+})=\left(\frac{1}{2}{\mathbb I}+{\bf K}_{\partial \Omega }^*\right){\boldsymbol \varphi} =-\frac{\mu }{1-\mu }{\boldsymbol \varphi} -\frac{1}{1-\mu }{\mathcal P}{\mathcal V}_{\partial \Omega }{\boldsymbol \varphi} +c\nu ,\\
&{{\bf t}_0^-}({\bf u}_{-},\pi _{-} )=\left(-\frac{1}{2}{\mathbb I}+{\bf K}_{\partial \Omega }^*\right){\boldsymbol \varphi} =-\frac{1}{1-\mu }{\boldsymbol \varphi} -\frac{1}{1-\mu }{\mathcal P}{\mathcal V}_{\partial \Omega }{\boldsymbol \varphi} +c\nu
\end{align}
on $\partial \Omega $, due to formulas \eqref{eq:dlB13}. Moreover, by using the Green formula (\ref{conormal-derivative-generalized-2}) (with $V=0$) in each of the domains $\Omega _{+}$ and $\Omega _{-}$, as well as the property that $\langle {\mathcal V}_{\partial \Omega }{\boldsymbol \varphi} ,\nu \rangle _{\partial \Omega }=\langle {\boldsymbol \varphi} ,{\mathcal V}_{\partial \Omega }\nu \rangle _{\partial \Omega }=0$, we obtain the relations
\begin{align}
\label{P0-3-Fredhlm}
2\langle {\rm{Def}}\ {\bf u}_{+},{\rm{Def}}\ {\bf u}_{+}\rangle_{\Omega _{+}}&=
\langle {{\bf t}_0^+}({\bf u}_{+},\pi _{+}),\gamma _{+}{\bf u}_{+}\rangle _{\partial \Omega }
\\
&=-\frac{\mu }{1-\mu }\langle {\boldsymbol \varphi} ,{\mathcal V}_{\partial \Omega }{\boldsymbol \varphi} \rangle _{\partial \Omega }
-\frac{1}{1-\mu }\langle {\mathcal P}{\mathcal V}_{\partial \Omega }{\boldsymbol \varphi} ,{\mathcal V}_{\partial \Omega }{\boldsymbol \varphi} \rangle _{\partial \Omega },\nonumber
\\
\label{P0-4-Fredholm}
2\langle {\rm{Def}}\ {\bf u}_{-},{\rm{Def}}\ {\bf u}_{-}\rangle_{\Omega _{-}}&=
-\langle {{\bf t}_0^-}({\bf u}_{-},\pi _{-}),\gamma _{-}{\bf u}_{-}\rangle _{\partial \Omega }
\\
&=\frac{1}{1-\mu }\langle {\boldsymbol \varphi} ,{\mathcal V}_{\partial \Omega }{\boldsymbol \varphi} \rangle _{\partial \Omega }+\frac{1}{1-\mu }\langle {\mathcal P}{\mathcal V}_{\partial \Omega }{\boldsymbol \varphi} ,{\mathcal V}_{\partial \Omega }{\boldsymbol \varphi} \rangle _{\partial \Omega }.\nonumber
\end{align}
By \eqref{P0-3-Fredhlm}, \eqref{P0-4-Fredholm} and \eqref{Poisson-transmission3} we obtain that
\begin{align}
\label{uniqueness-2NS-F}
2\langle {\rm{Def}}\, {\bf u}_{+},{\rm{Def}}\, {\bf u}_{+}\rangle _{\Omega _{+}}\!\!+\!2\mu \langle {\rm{Def}}\, {\bf u}_{-},{\rm{Def}}\, {\bf u}_{-}\rangle _{\Omega _{-}}
&\!=\!-\langle {\mathcal P}{\mathcal V}_{\partial \Omega }{\boldsymbol \varphi},{\mathcal V}_{\partial \Omega }{\boldsymbol \varphi} \rangle _{\partial \Omega }\\
&\leq -C_{\mathcal P}\|{\mathcal V}_{\partial \Omega }{\boldsymbol \varphi} \|_{L^2(\partial \Omega ,\Lambda ^1TM)}^2,\nonumber
\end{align}
{implying}, in particular, that ${\mathcal V}_{\partial \Omega }{\boldsymbol \varphi} ={\bf 0}$ on $\partial \Omega $. Then by property \eqref{Fredholm-single-layer1} (see also \cite[(2.27)]{D-M}, \cite[Theorem 6.1]{M-T}) we obtain that
${\boldsymbol \varphi} \in {\mathbb R}\nu $ {and thus} $[{\boldsymbol \varphi}]={\mathbb R}\nu ${, that is, $[{\boldsymbol \varphi}]=[{\bf 0}]$ in $L^2(\partial \Omega ,\Lambda ^1TM)/{{\mathbb R}\nu }$}. Consequently, the Fredholm operator of index zero \eqref{Fredholm5} is one-to-one, i.e., an isomorphism. The adjoint operator
\begin{align}
\label{Fredholm3-new1}
\!\!\!\!\!\frac{1}{2}\frac{1+\mu }{1-\mu }{\mathbb I}+{\bf K}_{\partial \Omega }+\frac{1}{1-\mu }{\mathcal V}_{\partial \Omega }{\mathcal P}:L_{\nu }^2(\partial\Omega ,\Lambda ^1TM)\to L_{\nu }^2(\partial\Omega ,\Lambda ^1TM)
\end{align}
is an isomorphism as well.

In addition, the embedding $H^s_{\nu }(\partial\Omega ,\Lambda ^1TM)\hookrightarrow L^2_{\nu }(\partial\Omega ,\Lambda ^1TM)$ is continuous with dense range for any $s\in (0,1)$. Then, by applying Lemma \ref{Fredholm-kernel}, we deduce that the Fredholm operator with index zero \eqref{Fredholm4} is also one-to-one, and hence is an isomorphism, for any $s\in (0,1)$. The multiplication of this operator with $(1-\mu )$ is isomorphism, as well. Consequently, for any $s\in [0,1)$, operators \eqref{Fredholm-sm-ms} and \eqref{Fredholm-adj-sm-ms} are isomorphisms.

The invertibility of operators \eqref{Fredholm-sm-ms} and \eqref{Fredholm-adj-sm-ms} when $\mu \in (1,\infty )$ is equivalent with the invertibility of the operator
\begin{align}
-\frac{1}{2}\frac{1+\mu ^{-1}}{1-\mu ^{-1}}{\mathbb I}\!+\!{\bf K}_{\partial \Omega }\!-\!\frac{\mu ^{-1}}{1-\mu ^{-1}}{\mathcal V}_{\partial \Omega }{\mathcal P}
:H_\nu ^{s}(\partial \Omega ,\Lambda ^1TM)\to H_\nu ^{s}(\partial \Omega ,\Lambda ^1TM)\nonumber
\end{align}
for any $s\in (0,1)$, which follows with an argument similar to that in the case $\mu \in (0,1)$. We omit the details for the sake of brevity.
\end{proof}

We now prove a counterpart of Theorem \ref{single-layer-operator-Brinkman}(iii) for $\mu =1$.
{\begin{lem}
\label{isom-sm}
{Let $M$ satisfy {Assumption} $\ref{H}$ and $\dim (M)\geq 2$}. Let $\Omega \subset M$ be a Lipschitz domain. Let ${\mathcal P}\in L^{\infty }(\partial \Omega ,\Lambda ^1TM\otimes \Lambda ^1TM)$ be a symmetric tensor field which satisfies condition \eqref{Poisson-transmission3}. Then the following operators
are isomorphisms
\begin{align}
\label{aux}
&{\mathbb I}+{\mathcal P}{\mathcal V}_{\partial \Omega }:L^2(\partial \Omega ,\Lambda ^1TM)/{{\mathbb R}\nu }\to L^2(\partial \Omega ,\Lambda ^1TM)/{{\mathbb R}\nu },\\
\label{aux-adjoint}
&{\mathbb I}+{\mathcal V}_{\partial \Omega }{\mathcal P}:L_\nu ^2(\partial \Omega ,\Lambda ^1TM)\to L_\nu ^2(\partial \Omega ,\Lambda ^1TM).
\end{align}
\end{lem}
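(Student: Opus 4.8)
The plan is to treat both operators as compact perturbations of the identity and to exploit the fact that they are adjoint to one another, so that a single injectivity argument will suffice. First I would record that ${\mathcal V}_{\partial\Omega}\colon L^2(\partial\Omega,\Lambda^1TM)\to H^1(\partial\Omega,\Lambda^1TM)$ is bounded (the single-layer operator smooths by one order on the boundary, exactly as already used in the proof of Theorem~\ref{single-layer-operator-Brinkman}(iii)), while ${\mathcal P}$ is a bounded multiplication operator on $L^2(\partial\Omega,\Lambda^1TM)$ by its symmetry and \eqref{Poisson-transmission3}. Composing with the compact embedding $H^1(\partial\Omega,\Lambda^1TM)\hookrightarrow L^2(\partial\Omega,\Lambda^1TM)$ shows that both ${\mathcal P}{\mathcal V}_{\partial\Omega}$ and ${\mathcal V}_{\partial\Omega}{\mathcal P}$ are compact on $L^2(\partial\Omega,\Lambda^1TM)$. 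Hence ${\mathbb I}+{\mathcal P}{\mathcal V}_{\partial\Omega}$ and ${\mathbb I}+{\mathcal V}_{\partial\Omega}{\mathcal P}$ are Fredholm of index zero.

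Next I would check that these operators descend to the quotient and restrict to the subspace appearing in \eqref{aux} and \eqref{aux-adjoint}. Since ${\bf V}_{\partial\Omega}\nu=0$ yields ${\mathcal V}_{\partial\Omega}\nu={\bf 0}$, the operator ${\mathcal P}{\mathcal V}_{\partial\Omega}$ annihilates ${\mathbb R}\nu$, so ${\mathbb I}+{\mathcal P}{\mathcal V}_{\partial\Omega}$ is well defined on $L^2(\partial\Omega,\Lambda^1TM)/{\mathbb R}\nu$; moreover the symmetry of ${\mathcal V}_{\partial\Omega}$ together with ${\mathcal V}_{\partial\Omega}\nu={\bf 0}$ gives $\langle{\mathcal V}_{\partial\Omega}\psi,\nu\rangle_{\partial\Omega}=\langle\psi,{\mathcal V}_{\partial\Omega}\nu\rangle_{\partial\Omega}=0$, so ${\mathcal V}_{\partial\Omega}$ maps into $L^2_\nu(\partial\Omega,\Lambda^1TM)$ and ${\mathbb I}+{\mathcal V}_{\partial\Omega}{\mathcal P}$ preserves $L^2_\nu(\partial\Omega,\Lambda^1TM)$. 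Both the quotient and the restricted operators remain Fredholm of index zero. Finally, since ${\mathcal P}$ and ${\mathcal V}_{\partial\Omega}$ are self-adjoint on $L^2(\partial\Omega,\Lambda^1TM)$, the operators in \eqref{aux} and \eqref{aux-adjoint} are adjoint to one another under the duality $L^2(\partial\Omega,\Lambda^1TM)/{\mathbb R}\nu=(L^2_\nu(\partial\Omega,\Lambda^1TM))'$, so it is enough to prove that \eqref{aux} is injective.

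For injectivity I would mimic the computation leading to \eqref{uniqueness-2NS-F}, now with $\mu=1$. Suppose $[{\boldsymbol\varphi}]$ lies in the kernel of \eqref{aux}, i.e. ${\boldsymbol\varphi}+{\mathcal P}{\mathcal V}_{\partial\Omega}{\boldsymbol\varphi}=c\nu$ for some $c\in{\mathbb R}$, and set ${\bf u}_\pm:=({\bf V}_{\partial\Omega}{\boldsymbol\varphi})|_{\Omega_\pm}\in H^1(\Omega_\pm,\Lambda^1TM)$ and $\pi_\pm:=({\mathcal Q}_{\partial\Omega}{\boldsymbol\varphi})|_{\Omega_\pm}\in L^2(\Omega_\pm)$, which solve the homogeneous Stokes system in $\Omega_\pm$. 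By \eqref{single-layer-Brinkman8a} and \eqref{eq:dlB13} one has $\gamma_+{\bf u}_+=\gamma_-{\bf u}_-={\mathcal V}_{\partial\Omega}{\boldsymbol\varphi}$ and ${\bf t}_0^+({\bf u}_+,\pi_+)-{\bf t}_0^-({\bf u}_-,\pi_-)={\boldsymbol\varphi}$, so the kernel relation becomes ${\bf t}_0^+({\bf u}_+,\pi_+)-{\bf t}_0^-({\bf u}_-,\pi_-)+{\mathcal P}\gamma_+{\bf u}_+=c\nu$. Applying the Green identity \eqref{conormal-derivative-generalized-2} with $V=0$ and $s=\tfrac12$ in each of $\Omega_\pm$, adding the two identities, and using $\langle c\nu,{\mathcal V}_{\partial\Omega}{\boldsymbol\varphi}\rangle_{\partial\Omega}=0$, I obtain $2\langle{\rm Def}\,{\bf u}_+,{\rm Def}\,{\bf u}_+\rangle_{\Omega_+}+2\langle{\rm Def}\,{\bf u}_-,{\rm Def}\,{\bf u}_-\rangle_{\Omega_-}=-\langle{\mathcal P}{\mathcal V}_{\partial\Omega}{\boldsymbol\varphi},{\mathcal V}_{\partial\Omega}{\boldsymbol\varphi}\rangle_{\partial\Omega}\le-C_{\mathcal P}\|{\mathcal V}_{\partial\Omega}{\boldsymbol\varphi}\|^2_{L^2(\partial\Omega,\Lambda^1TM)}$ by \eqref{Poisson-transmission3}. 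Since the left-hand side is nonnegative, both sides vanish, whence ${\mathcal V}_{\partial\Omega}{\boldsymbol\varphi}={\bf 0}$, and \eqref{Fredholm-single-layer1} forces ${\boldsymbol\varphi}\in{\mathbb R}\nu$, i.e. $[{\boldsymbol\varphi}]=[{\bf 0}]$. Thus \eqref{aux} is an injective Fredholm operator of index zero, hence an isomorphism, and its adjoint \eqref{aux-adjoint} is an isomorphism as well. The main obstacle is this injectivity step: one must confirm that the layer potentials carry enough regularity for the Green identity to apply at $s=\tfrac12$, and, above all, that the inhomogeneous term $c\nu$ drops out when paired against ${\mathcal V}_{\partial\Omega}{\boldsymbol\varphi}$, which is exactly what renders the positivity of ${\mathcal P}$ usable in the $\mu=1$ case.
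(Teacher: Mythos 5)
Your proposal is correct and takes essentially the same approach as the paper's proof: identity plus a compact perturbation (using ${\mathcal V}_{\partial \Omega }:L^2\to H^1$ and the compact embedding) to get Fredholmness of index zero, then injectivity of \eqref{aux} by writing the kernel relation as $({\mathbb I}+{\mathcal P}{\mathcal V}_{\partial \Omega }){\boldsymbol \varphi }=c\nu $, representing ${\bf u}_{\pm }$, $\pi _{\pm }$ via the single-layer potentials, applying the Green identity \eqref{conormal-derivative-generalized-2} with $V=0$ in $\Omega _{\pm }$, and using $\langle \nu ,{\mathcal V}_{\partial \Omega }{\boldsymbol \varphi }\rangle _{\partial \Omega }=0$ together with \eqref{Poisson-transmission3} and \eqref{Fredholm-single-layer1} to conclude $[{\boldsymbol \varphi }]=[{\bf 0}]$, with \eqref{aux-adjoint} handled by duality. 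The only cosmetic difference is that you establish Fredholmness of both operators on the full space $L^2(\partial \Omega ,\Lambda ^1TM)$ and then descend to the quotient and restrict to $L^2_{\nu }$, whereas the paper argues directly for \eqref{aux-adjoint} on $L_\nu ^2$ and transfers to \eqref{aux} by duality; this changes nothing of substance.
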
}
\begin{proof}
First, we note that the map ${\mathcal V}_{\partial \Omega }{\mathcal P}:L_\nu ^2(\partial \Omega ,\Lambda ^1TM)\to H_\nu ^1(\partial \Omega ,\Lambda ^1TM)$ is continuous and the embedding $H_\nu ^1(\partial \Omega ,\Lambda ^1TM)\hookrightarrow L_\nu ^2(\partial \Omega ,\Lambda ^1TM)$ is compact. Hence, the operator in \eqref{aux-adjoint} is Fredholm with index zero. Then, by duality, we deduce that the operator in \eqref{aux} is Fredholm with index zero as well. We now show that {operator \eqref{aux}} is also one-to-one. To this {end}, we consider $[{\boldsymbol \varphi}]\in L^2(\partial \Omega ,\Lambda ^1TM)/{{\mathbb R}\nu }$ such that $\left({\mathbb I}+{\mathcal P}{\mathcal V}_{\partial \Omega }\right)[{\boldsymbol \varphi}]=[{\bf 0}]$. Thus, $[{\boldsymbol \varphi}]={\boldsymbol \varphi} +{\mathbb R}\nu $, ${\boldsymbol \varphi} \in L^2(\partial \Omega ,\Lambda ^1TM)$. Moreover, there exists $c_0\in {\mathbb R}$ such that
\begin{equation}
\label{aux-1}
\left({\mathbb I}+{\mathcal P}{\mathcal V}_{\partial \Omega }\right){\boldsymbol \varphi} =c_0\nu ,
\end{equation}
due to the property that ${\mathcal V}_{\partial \Omega }\nu ={\bf 0}$ on $\partial \Omega $.
{Then} the fields ${\bf u}_0:={\bf V}_{\partial \Omega }{\boldsymbol \varphi} \in H^1(\Omega _{\pm },\Lambda ^1TM)$ and $\pi _0:={\mathcal Q}_{\partial \Omega }{\boldsymbol \varphi} \in L^2(\Omega _{\pm })$ satisfy the Stokes system in $\Omega _{+}:=\Omega $ and $\Omega _{-}:=M\setminus \overline\Omega $, and the following relations on $\partial \Omega $
\begin{align}
\label{Fredholm7-sm}
&\gamma _{+}{\bf u}_{0}=\gamma _{-}{\bf u}_{0}={\mathcal V}_{\partial \Omega }{\boldsymbol \varphi} ,
\\
\label{Fredholm8-sm}
&{{\bf t}_0^-}({\bf u}_{0},\pi _{0})=\left(\frac{1}{2}{\mathbb I}+{\bf K}_{\partial \Omega }^*\right){\boldsymbol \varphi} =-\frac{1}{2}{\mathcal P}{\mathcal V}_{\partial \Omega }{\boldsymbol \varphi} +\frac{c_0}{2}\nu +{\bf K}_{\partial \Omega }^*{\boldsymbol \varphi} ,
\\
&{{\bf t}_0^-}({\bf u}_{0},\pi_{0})=\left(-\frac{1}{2}{\mathbb I}+{\bf K}_{\partial \Omega }^*\right){\boldsymbol \varphi} =\frac{1}{2}{\mathcal P}{\mathcal V}_{\partial \Omega }{\boldsymbol \varphi} -\frac{c_0}{2}\nu +{\bf K}_{\partial \Omega }^*{\boldsymbol \varphi} ,
\end{align}
due to formulas \eqref{eq:dlB13} {and \eqref{aux-1}}. In addition, by the Green formula \eqref{conormal-derivative-generalized-2} in $\Omega _{\pm }$ and the relation
$\langle {\mathcal V}_{\partial \Omega }{\boldsymbol \varphi} ,\nu \rangle _{\partial \Omega }=
\langle {\boldsymbol \varphi} ,{\mathcal V}_{\partial \Omega }\nu \rangle _{\partial \Omega }=0$, we obtain \begin{align}
\label{P0-3-Fredhlm-sm}
2\langle {\rm{Def}}\ {\bf u}_{0},{\rm{Def}}\ {\bf u}_{0}\rangle _{\Omega _{+}}&=
\langle {{\bf t}_0^+}({\bf u}_{0},\pi _{0}),\gamma _{+}{\bf u}_{0}\rangle _{\partial \Omega }
\nonumber\\
&=-\frac{1}{2}\langle {\mathcal P}{\mathcal V}_{\partial \Omega }{\boldsymbol \varphi} ,{\mathcal V}_{\partial \Omega }{\boldsymbol \varphi} \rangle _{\partial \Omega }+\langle {\bf K}_{\partial \Omega }^*{\boldsymbol \varphi} ,{\mathcal V}_{\partial \Omega }{\boldsymbol \varphi} \rangle _{\partial \Omega },\\
\label{P0-4-Fredholm-sm}
2\langle {\rm{Def}}\ {\bf u}_{0},{\rm{Def}}\ {\bf u}_{0}\rangle _{\Omega _{-}}&=
-\langle {{\bf t}_0^-}({\bf u}_{-},\pi _{-}),\gamma _{-}{\bf u}_{-}\rangle _{\partial \Omega }\nonumber\\
&=-\frac{1}{2}\langle {\mathcal P}{\mathcal V}_{\partial \Omega }{\boldsymbol \varphi} ,{\mathcal V}_{\partial \Omega }{\boldsymbol \varphi} \rangle _{\partial \Omega }-\langle {\bf K}_{\partial \Omega }^*{\boldsymbol \varphi} ,{\mathcal V}_{\partial \Omega }{\boldsymbol \varphi} \rangle _{\partial \Omega }.
\end{align}
By \eqref{P0-3-Fredhlm-sm}, \eqref{P0-4-Fredholm-sm} and \eqref{Poisson-transmission3} we deduce the relation
\begin{align}
\label{uniqueness-2NS-F-sm}
2\langle {\rm{Def}}\,{\bf u}_{0},{\rm{Def}}\, {\bf u}_{0}\rangle _{\Omega _{0}}+\langle {\rm{Def}}\, {\bf u}_{0},{\rm{Def}}\, {\bf u}_{0}\rangle _{\Omega _{-}}&=-\langle {\mathcal P}{\mathcal V}_{\partial \Omega }{\boldsymbol \varphi} ,{\mathcal V}_{\partial \Omega }{\boldsymbol \varphi} \rangle _{\partial \Omega }\\
&\leq -C_{\mathcal P}\|{\mathcal V}_{\partial \Omega }{\boldsymbol \varphi} \|_{L^2(\partial \Omega ,\Lambda ^1TM)}^2,\nonumber
\end{align}
where the left-hand side is non-negative, while the right-hand side is non-positive. Therefore, both sides vanish, and, in particular, we deduce that ${\mathcal V}_{\partial \Omega }{\boldsymbol \varphi} ={\bf 0}$ on $\partial \Omega $, i.e., ${\boldsymbol \varphi}\in {\rm{Ker}}\left\{{\mathcal V}_{\partial \Omega }:L^2(\partial \Omega ,\Lambda ^1TM)\to L^2(\partial \Omega ,\Lambda ^1TM)\right\}$. By using, e.g., \cite[Theorem 6.1]{M-T}{, this implies} that ${\boldsymbol \varphi}\in {\mathbb R}\nu $, and thus, $[{\boldsymbol \varphi}]={\mathbb R}\nu $, that is, $[{\boldsymbol \varphi}]=[{\bf 0}]$ in $L^2(\partial \Omega ,\Lambda ^1TM)/{{\mathbb R}\nu }$. Consequently, the Fredholm operator of index zero \eqref{aux} is one-to-one, and hence, an isomorphism. By duality and the property $\left(L_\nu ^2(\partial \Omega ,\Lambda ^1TM)\right)'=L^2(\partial \Omega ,\Lambda ^1TM)/{{\mathbb R}\nu }$, we deduce that the operator \eqref{aux-adjoint} is also invertible.
\end{proof}

\section{Mapping properties of nonlinear operator ${\mathring{\mathcal I}}_{k;\beta ;{\Omega }}$}
\label{A3}
{Recall that $\mathring E$ is the operator of extension of vector fields (or one forms) defined in $\Omega $ by zero on $M\setminus \Omega $. For {$s\in [\frac{1}{2},1)$} we {will prove in the following lemma that} the non-linear operator ${\mathring{\mathcal I}}_{k;\beta ;{\Omega }}:{H^{s+\frac{1}{2}}({\Omega },\Lambda ^1TM)}\to \widetilde{H}^{s-\frac{3}{2}}({\Omega },\Lambda ^1TM)$,
\begin{align}
\label{Newtonian-oper-Brinkman1-s}
{\mathring{\mathcal I}}_{k;\beta ;{\Omega }}({\bf v}):=\mathring E\left(k|{\bf v}|{\bf v}+\beta \nabla _{\bf v}{\bf v}\right),
\end{align}
{is bounded and continuous, which implies that it} can be used in Section~\ref{nonlin} {as an alternative to} the operator $\widetilde{\mathcal I}$ defined by \eqref{nonlin-1} and \eqref{nonlin-2}.
\begin{lem}
\label{particular-operator}
{Let $M$ satisfy {Assumption} $\ref{H}$ and $\dim (M)\geq 2$}. Let $\Omega \subset M$ be a Lipschitz domain. Let {$s\in [\frac{1}{2},1)$}. Then there exist some constants $C_1'\equiv C_1'({\Omega },k,\beta )>0$ and $C_1\equiv C_1({\Omega },k,\beta )>0$ such that
\begin{align}
\label{Ibound}
&\|{\mathring{\mathcal I}}_{k;\beta ;{\Omega }}({\bf u})\|_{\widetilde{H}^{s-\frac{3}{2}}({\Omega },\Lambda ^1TM)}\leq C_1'\|{\bf u}\|_{H^{s+\frac{1}{2}}(\Omega ,\Lambda ^1TM)}^2,
\end{align}
\begin{align}
\label{NS-s3-new}
&\|{\mathring{\mathcal I}}_{k;\beta ;{\Omega }}({\bf v})-{\mathring{\mathcal I}}_{k;\beta ;{\Omega }}({\bf w})\|_{\widetilde{H}^{s-\frac{3}{2}}(\Omega ,\Lambda ^1TM)}\\
&\leq {C_1}\left(\|{\bf v}\|_{H^{s+\frac{1}{2}}(\Omega ,\Lambda ^1TM)}+\|{\bf w}\|_{H^{s+\frac{1}{2}}(\Omega ,\Lambda ^1TM)}\right)\|{\bf v}-{\bf w}\|_{H^{s+\frac{1}{2}}(\Omega ,\Lambda ^1TM)},\nonumber
\end{align}
for all ${\mathbf u, \mathbf v, \mathbf w}\in {H^{s+\frac{1}{2}}({\Omega },\Lambda ^1TM)}$.
\end{lem}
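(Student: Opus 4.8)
The plan is to bypass the pointwise multiplication theorems employed in the proof of Lemma~\ref{embeddings} (which are adapted to the mixed extensions $\widetilde E^{s-\frac12}$, $E^{s+\frac12}$) and instead to exploit that, because $s\ge\frac12$, both $|{\bf v}|{\bf v}$ and $\nabla_{\bf v}{\bf v}$ are genuine Lebesgue-integrable fields on $\Omega$, so that the canonical zero-extension $\mathring E$ can be controlled through an $L^p$-detour. Concretely, I would first show that the field $g_{\bf v}:=k|{\bf v}|{\bf v}+\beta\nabla_{\bf v}{\bf v}$ belongs to some $L^p(\Omega,\Lambda^1TM)$ with $p>1$ and $\|g_{\bf v}\|_{L^p(\Omega)}\le C\|{\bf v}\|_{H^{s+\frac12}(\Omega,\Lambda^1TM)}^2$. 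Then $\mathring E g_{\bf v}\in L^p(M,\Lambda^1TM)$ is an honest function supported in $\overline\Omega$, and via the continuous embedding $L^p(M)\hookrightarrow H^{s-\frac32}(M)$ it lies in $\widetilde H^{s-\frac32}(\Omega,\Lambda^1TM)$, since by \eqref{space-1} this space is exactly the set of elements of $H^{s-\frac32}(M)$ with support in $\overline\Omega$; moreover $\|\mathring E g_{\bf v}\|_{\widetilde H^{s-\frac32}(\Omega)}\le C\|g_{\bf v}\|_{L^p(\Omega)}$. Chaining these bounds gives \eqref{Ibound}.

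The hypothesis $s\ge\frac12$ is decisive precisely at the first step: it ensures $s-\frac12\ge0$, so that $\partial_j v^\ell\in H^{s-\frac12}(\Omega)$ is a true $L^2$-function and the local expression \eqref{locally}, $(\nabla_{\bf v}{\bf v})^\ell=v^j\partial_jv^\ell+\Gamma^\ell_{rj}v^rv^j$, is an ordinary pointwise product. For $\dim(M)=m\in\{2,3\}$ the Sobolev embeddings on the Lipschitz domain $\Omega$ give $H^{s+\frac12}(\Omega)\hookrightarrow L^{q_1}(\Omega)$ and $H^{s-\frac12}(\Omega)\hookrightarrow L^{q_2}(\Omega)$ with $\tfrac1{q_1}=\tfrac12-\tfrac{s+1/2}{m}$ and $\tfrac1{q_2}=\tfrac12-\tfrac{s-1/2}{m}$ (read as any finite exponent, respectively $L^\infty$, in the critical and supercritical $m=2$ cases). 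H\"older's inequality then places the leading term $v^j\partial_jv^\ell$ in $L^{q_3}(\Omega)$ with $\tfrac1{q_3}=\tfrac1{q_1}+\tfrac1{q_2}=1-\tfrac{2s}{m}$, the smooth Christoffel term $\Gamma^\ell_{rj}v^rv^j$ and the term $|{\bf v}|{\bf v}$ in the better space $L^{q_0}(\Omega)$ with $\tfrac1{q_0}=\tfrac2{q_1}$; since $q_0>q_3$ on the bounded set $\Omega$, one may take $p:=q_3$. Finally, the required embedding $L^{q_3}(M)\hookrightarrow H^{s-\frac32}(M)$ holds because its dual, the Sobolev embedding $H^{\frac32-s}(M)\hookrightarrow L^{q_3'}(M)$, is valid for $m\le3$ exactly when $s\ge\tfrac{m-3}{2}$, which is automatic here. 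This establishes \eqref{Ibound}.

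For the difference estimate \eqref{NS-s3-new} I would repeat the same mechanism on the bilinear decomposition $\nabla_{\bf v}{\bf v}-\nabla_{\bf w}{\bf w}=\nabla_{{\bf v}-{\bf w}}{\bf v}+\nabla_{\bf w}({\bf v}-{\bf w})$ together with the elementary pointwise Lipschitz bound $\big||{\bf v}|{\bf v}-|{\bf w}|{\bf w}\big|\le C(|{\bf v}|+|{\bf w}|)|{\bf v}-{\bf w}|$. Applying the identical H\"older and embedding chain to each summand produces one factor $\|{\bf v}-{\bf w}\|_{H^{s+\frac12}}$ and one factor $\|{\bf v}\|_{H^{s+\frac12}}$ or $\|{\bf w}\|_{H^{s+\frac12}}$, which is exactly \eqref{NS-s3-new}; the continuity of $\mathring{\mathcal I}_{k;\beta;\Omega}$ is then an immediate consequence. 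As in Lemma~\ref{embeddings}, the passage from the Euclidean estimates to the Lipschitz domain $\Omega\subset M$ is carried out through charts and a partition of unity, using the smoothness of the Christoffel coefficients $\Gamma^\ell_{rj}$ and the definition of the Sobolev spaces on $M$.

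The one genuinely delicate point is the membership $\mathring E g_{\bf v}\in\widetilde H^{s-\frac32}(\Omega)$ itself: the order $s-\frac32\in[-1,-\frac12)$ lies outside the range $(-\frac12,\frac12)$ in which $\mathring E$ is a bounded operator on the Sobolev scale, so one cannot simply extend a general $H^{s-\frac32}(\Omega)$ distribution by zero. The $L^p$-detour resolves this precisely because $g_{\bf v}$ is a true function rather than an abstract negative-order distribution, so its zero-extension is elementary and the support condition places it in $\widetilde H^{s-\frac32}(\Omega)$. I expect the remaining care to lie in the exponent bookkeeping uniformly over $s\in[\tfrac12,1)$ and $m\in\{2,3\}$, most notably in the borderline case $m=2$, $s=\tfrac12$, where $H^1(\Omega)$ just fails to embed into $L^\infty$ and one must instead select a finite but sufficiently large $q_1$ so that $q_3>1$ and the duality condition $s\ge\tfrac{m-3}{2}$ continues to hold.
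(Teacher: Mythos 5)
Your proposal is correct and takes essentially the same route as the paper's proof: there, too, the hypothesis $s\ge\frac12$ places $\partial_j w^\ell\in H^{s-\frac12}(\Omega)\hookrightarrow L^2(\Omega)$, H\"older combined with $H^{s+\frac12}(\Omega)\hookrightarrow L^6(\Omega)$ (resp.\ $L^4(\Omega)$) puts $\nabla_{\bf v}{\bf v}$ in $L^{3/2}(\Omega)$ and $|{\bf v}|{\bf v}$ in $L^2(\Omega)$, a density-and-duality argument (via $H^{\frac32-s}(\Omega)\hookrightarrow L^{r}(\Omega)$, $r\in[1,3]$) gives the continuity of $\mathring E:L^{r'}(\Omega)\to\widetilde H^{s-\frac32}(\Omega)$, and the Lipschitz estimate \eqref{NS-s3-new} follows from the identical bilinear decompositions $\nabla_{\bf v}{\bf v}-\nabla_{\bf w}{\bf w}=\nabla_{{\bf v}-{\bf w}}{\bf v}+\nabla_{\bf w}({\bf v}-{\bf w})$ and $|{\bf v}|{\bf v}-|{\bf w}|{\bf w}=(|{\bf v}|-|{\bf w}|){\bf v}+|{\bf w}|({\bf v}-{\bf w})$. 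The only cosmetic difference is that you track sharp $s$- and $m$-dependent exponents, whereas the paper fixes the uniform convenient choices $q\in\{4,6\}$ and $r'\in\{3/2,2\}$, which suffice for $\dim(M)\in\{2,3\}$ and all $s\in[\frac12,1)$.
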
}
\begin{proof}
Since $\Omega $ is a Lipschitz domain on the compact Riemannian manifold $M$ with $\dim (M)\!\in \!\{2,3\}$, the Sobolev embedding theorem yields that the inclusions
\begin{align}
\label{Newtonian-D-B-F-new5-new1a-S}
H^{s+\frac{1}{2}}({\Omega },\Lambda ^1TM)\hookrightarrow H^{1}({\Omega },\Lambda ^1TM)\hookrightarrow L^q({\Omega }_{+},\Lambda ^1TM)
\end{align}
are continuous for all $q\in [{1},6]$ (see, e.g., \cite[Chapter 2]{Aubin}). The inclusions
\begin{align}
\label{mod}
{H^{\frac{3}{2}-s}({\Omega },\Lambda ^1TM)\hookrightarrow L^{\frac{2n}{n-3+2s}}({\Omega }_{+},\Lambda ^1TM)\hookrightarrow L^r({\Omega }_{+},\Lambda ^1TM)}
\end{align}
are also continuous for all $r\in [1,3]$. Then a density and duality argument implies that the embedding
\begin{equation}
\label{Newtonian-D-B-F-Robin2}
{\mathring E}L^{r'}({\Omega },\Lambda ^1TM)\hookrightarrow \widetilde{H}^{s-\frac{3}{2}}({\Omega },\Lambda ^1TM)
\end{equation}
is continuous, where $r'\in (1,\infty )$, $\frac{1}{r}+\frac{1}{r'}=1$. Hence, ${\mathring E}{\bf u}\in \widetilde{H}^{s-\frac{3}{2}}({\Omega },\Lambda ^1TM)$ for ${\bf u}\in L^{r'}({\Omega },\Lambda ^1TM)$, and there is a constant ${C}\equiv {C}({\Omega },s)>0$ such that
\begin{align}
\label{Newtonian-D-B-F-Robin2-S}
\|{\mathring E}{\bf u}\|_{\widetilde{H}^{s-\frac{3}{2}}({\Omega },\Lambda ^1TM)}\le {C}\|{\bf u}\|_{L^{r'}({\Omega },\Lambda ^1TM)},\ \forall \ {\bf u}\in L^{r'}({\Omega },\Lambda ^1TM),
\end{align}
By (\ref{Newtonian-D-B-F-new5-new1a-S}) with $q=4$ and by the H\"{o}lder inequality there exists a constant $C'\equiv C'({\Omega },s)>0$ such that
\begin{align}
\label{Newtonian-D-B-F-new5-new1-s}
\|\ |{\bf v}|{\bf w}\ \|_{L^2({\Omega },\Lambda ^1TM)}&\leq \|{\bf v}\|_{L^4({\Omega },\Lambda ^1TM)}\|{\bf w}\|_{L^4({\Omega },\Lambda ^1TM)}\nonumber
\\
&\leq C'\|{\bf v}\|_{H^{s+\frac{1}{2}}({\Omega },\Lambda ^1TM)}\|{\bf w}\|_{H^{s+\frac{1}{2}}({\Omega },\Lambda ^1TM)},
\end{align}
and hence $|{\bf v}|{\bf w}\in L^2({\Omega },\Lambda ^1TM)$ for all ${\bf v},\, {\bf w}\in H^{s+\frac{1}{2}}({\Omega },\Lambda ^1TM)$.
Moreover, by (\ref{Newtonian-D-B-F-Robin2-S}) (with $r=2$) and (\ref{Newtonian-D-B-F-new5-new1-s}), the positively homogeneous operator ${\mathcal B}$ of order $2$, defined by
$${\mathcal B}({\bf v},{\bf w}):={\mathring E}(|{{\bf v}|{\bf w}}),$$ maps $H^{s+\frac{1}{2}}({\Omega },\Lambda ^1TM)\!\times \!H^{s+\frac{1}{2}}({\Omega },\Lambda ^1TM)$ to $\widetilde{H}^{0}({\Omega },\Lambda ^1TM)\!\!\hookrightarrow \!\!\widetilde{H}^{s-\frac{3}{2}}({\Omega },\Lambda ^1TM)$ and there exists a constant $c\equiv c({\Omega },s)>0$ such that
\begin{align}
\label{Newtonian-D-B-F-new5-new1}
\|{\mathcal B}({\bf v},{\bf w})\|_{{\widetilde{H}^{s-\frac{3}{2}}({\Omega },\Lambda ^1TM)}}&\leq c \|{\bf v}\|_{H^{s+\frac{1}{2}}({\Omega },\Lambda ^1TM)}\|{\bf w}\|_{H^{s+\frac{1}{2}}({\Omega },\Lambda ^1TM)}.
\end{align}
Consequently, the following operators are bounded
\begin{align}
&{\mathcal B}: H^{s+\frac{1}{2}}({\Omega },\Lambda ^1TM)\times H^{s+\frac{1}{2}}({\Omega },\Lambda ^1TM)\to L^{2}({\Omega },\Lambda ^1TM),
\\
&{\mathcal B}: H^{s+\frac{1}{2}}({\Omega },\Lambda ^1TM)\times H^{s+\frac{1}{2}}({\Omega },\Lambda ^1TM)\to \widetilde{H}^{s-\frac{3}{2}}({\Omega },\Lambda ^1TM).
\end{align}
On the other hand, recall that if ${\bf v}=v^j\partial _j$ and ${\bf w}=w^\ell \partial _\ell $, then $\nabla _{\bf v}{\bf w}$ has the following local representation
$\left(\nabla _{{\bf v}}{\bf w}\right)^\ell =v^j\partial _jw^\ell+\Gamma _{rj}^\ell w^rv^j.$
By exploiting the H\"{o}lder inequality, the embedding $\partial _jw^\ell\in H^{s-\frac{1}{2}}({\Omega })\hookrightarrow L^2 ({\Omega })$ for $s\ge 1/2$, and embedding \eqref{Newtonian-D-B-F-new5-new1a-S} with $q=6$, we deduce that there exists a constant $c_{(0)}\equiv c_{(0)}({\Omega },s)>0$ such that

\begin{align}
\|v^j\partial _jw^\ell\|_{L^{\frac{3}{2}}({\Omega })}&
\leq \|v^j\|_{L^{6}({\Omega })}\|\partial _jw^\ell\|_{L^{2}({\Omega })}
\leq c_{(0)}\|v^j\|_{H^{s+\frac{1}{2}}({\Omega })}
\|w^\ell\|_{H^{s+\frac{1}{2}}({\Omega })}, \nonumber
\end{align}
for all ${\bf v},\, {\bf w}\in H^{s+\frac{1}{2}}(\Omega ,\Lambda ^1TM)$. Therefore, the following inequality holds
\begin{align}
\label{Newtonian-D-B-F-Robin1-s}
\|\nabla _{\bf v}{\bf w}\|_{L^{\frac{3}{2}}({\Omega },\Lambda ^1TM)}\leq C_0'\|{\bf v}\|_{H^{s+\frac{1}{2}}({\Omega },\Lambda ^1TM)}\|{\bf w}\|_{H^{s+\frac{1}{2}}({\Omega },\Lambda ^1TM)}
\end{align}
for all ${\bf v},\, {\bf w}\in H^{s+\frac{1}{2}}({\Omega },\Lambda ^1TM)$, with some constant $C_0'\equiv C_0'({\Omega },s)>0$.
{Moreover,} by the continuity of the embedding (\ref{Newtonian-D-B-F-Robin2}), with $r'=\frac{3}{2}$, and by (\ref{Newtonian-D-B-F-Robin1-s}), there exists a constant $C_0\equiv C_0({\Omega },s)>0$ such that the bilinear operator ${\mathring N}({\bf v},{\bf w}):={\mathring E}(\nabla _{\bf v}{\bf w})$ satisfies the inequality
\begin{align}
\label{Newtonian-D-B-F-Robin1}
\|{\mathring N}({\bf v},{\bf w})\|_{\widetilde{H}^{s-\frac{3}{2}}({\Omega },\Lambda ^1TM)}&\leq C_0\|{\bf v}\|_{H^{s+\frac{1}{2}}(\Omega ,\Lambda ^1TM)}\|{\bf w}\|_{H^{s+\frac{1}{2}}(\Omega ,\Lambda ^1TM)},
\end{align}
for all ${\bf v},{\bf w}\in H^{s+\frac{1}{2}}(\Omega ,\Lambda ^1TM)$. Hence, the following {operator} is bounded
\begin{align}
\label{bound-4}
{\mathring N}: H^{s+\frac{1}{2}}(\Omega ,\Lambda ^1TM)\times H^{s+\frac{1}{2}}(\Omega ,\Lambda ^1TM)\to \widetilde{H}^{s-\frac{3}{2}}(\Omega ,\Lambda ^1TM).
\end{align}
Thus, ${\mathring{\mathcal I}}_{k;\beta ;{\Omega }}$ is also bounded in the sense of \eqref{Ibound}
{with} the constants
\begin{align}
\label{const}
C_1'={|k|}C'+{|\beta |}C'_0 \text{ and } C_1={|k|}c+{|\beta |}C_0.
\end{align}
{In addition, due to \eqref{Newtonian-oper-Brinkman1-s}, ${\mathring{\mathcal I}}_{k;\beta ;{\Omega }}$ is positively homogeneous of order 2.}

We now show inequality \eqref{NS-s3-new}. Indeed, by \eqref{Newtonian-D-B-F-new5-new1}, \eqref{Newtonian-D-B-F-Robin1}, and the expression of the constant $C_1$ given in \eqref{const}, we obtain that
\begin{align}
\label{continuity}
&\|{\mathring{\mathcal I}}_{k;\beta ;{\Omega }}({\bf v})-{\mathring{\mathcal I}}_{k;\beta ;{\Omega }}({\bf w})\|_{\widetilde{H}^{s-\frac{3}{2}}(\Omega ,\Lambda ^1TM)}\\
&\leq {|k|}\|\ |{\bf v}|{\bf v}-|{\bf w}|{\bf w}\ \|_{\widetilde{H}^{s-\frac{3}{2}}(\Omega ,\Lambda ^1TM)}+{|\beta |}\|\nabla _{{\bf v}}{\bf v}-\nabla _{{\bf w}}{\bf w}\|_{\widetilde{H}^{s-\frac{3}{2}}(\Omega ,\Lambda ^1TM)}\nonumber
\\
&\leq {|k|}\|(|{\bf v}|-|{\bf w}|){\bf v}+|{\bf w}|({\bf v}-{\bf w})\|_{\widetilde{H}^{s-\frac{3}{2}}(\Omega ,\Lambda ^1TM)}\nonumber\\
&\hspace{4em}+{|\beta |}\|\nabla _{{\bf v}-{\bf w}}{\bf v}+\nabla _{{\bf w}}({\bf v}-{\bf w})\|_{\widetilde{H}^{s-\frac{3}{2}}(\Omega ,\Lambda ^1TM)}\nonumber\\
&\leq ({|k|}c+{|\beta |}C_0)\|{\bf v}-{\bf w}\|_{H^{s+\frac{1}{2}}(\Omega ,\Lambda ^1TM)}\left(\|{\bf v}\|_{H^{s+\frac{1}{2}}(\Omega ,\Lambda ^1TM)}+\|{\bf w}\|_{H^{s+\frac{1}{2}}(\Omega )}\right)\nonumber\\
&=C_1\|{\bf v}-{\bf w}\|_{H^{s+\frac{1}{2}}(\Omega ,\Lambda ^1TM)}\left(\|{\bf v}\|_{H^{s+\frac{1}{2}}(\Omega ,\Lambda ^1TM)}+\|{\bf w}\|_{H^{s+\frac{1}{2}}(\Omega ,\Lambda ^1TM)}\right)\nonumber,
\end{align}
for all ${\bf v},{\bf w}\in H^{s+\frac{1}{2}}(\Omega ,\Lambda ^1TM),$ i.e., inequality (\ref{NS-s3-new}) {holds.}
\end{proof}


\end{document}